\DeclareFontFamily{OT1}{pzc}{}
\DeclareFontShape{OT1}{pzc}{m}{it}{<-> s * [1.10] pzcmi7t}{}
\DeclareMathAlphabet{\mathpzc}{OT1}{pzc}{m}{it}
\definecolor{Red}{cmyk}{0,1,1,0.2}
\newcommand{\N}{\mathbb N}
\newcommand{\R}{\mathbb R}
\def\R{\mathbb R}
\def\N{\mathbb N}
\def\ep{\epsilon}
\newcommand{\be}{\begin{equation}}
\newcommand{\ee}{\end{equation}}
\def\1{{\bf 1}}
\def\ds{\displaystyle}
\newtheorem{Theorem}{Theorem}[section]
\newtheorem{Definition}[Theorem]{Definition}
\newtheorem{Proposition}[Theorem]{Proposition}
\newtheorem{Lemma}[Theorem]{Lemma}
\newtheorem{Corollary}[Theorem]{Corollary}
\newtheorem{Remark}[Theorem]{Remark}
\begin{document}


\title{A microscopic derivation of a traffic flow model on a junction with two entry lines}
\author{P. Cardaliaguet}
\author{\renewcommand{\thefootnote}{\arabic{footnote}}
  P. Cardaliaguet\footnotemark[1]}
\footnotetext[1]{CEREMADE, UMR CNRS 7534, Universit\'e Paris Dauphine-PSL,
Place de Lattre de Tassigny, 75775 Paris Cedex 16, France. }

\maketitle

\begin{abstract} We derive a conservation law on a network made of two incoming branches and a single outgoing one from a discrete traffic flow model. The continuous model is obtained from the discrete one by letting the number of vehicles tend to infinity and after scaling. In the discrete model, the vehicles  solve a follow-the-leader model on each branch. The priority rule at the junction is given by a time-periodic traffic light (with a period tending to infinity with the scaling for technical reasons). The conservation law at the limit is $L^1-$contractive and described in terms of a germ depending on the flux and on the average amount of time the traffic light is ``green'' on a branch. 
\end{abstract}



\section{Introduction} 

\subsection{Foreword}

Macroscopic models of traffic flows on networks have attracted a lot of attention in the recent years: see for instance the monographs or survey papers \cite{BDMGGP22, BCGHP, GHP16}, and the references therein. If the model in the interior of each branch of the netwok makes a relative consensus---it is very often, in a first approximation, a conservation law known as the LRW model---, the conditions to be put on the junction is more discussed. Several possible rules have been proposed (see the references above), at least when the junction involves more than three roads. The goal of the present paper is to pin-point one specific junction rule---on a junction with two incoming roads and a single out-going one---which pops up after homogenization of a microscopic model. In our microscopic model (a follow-the-leader model on each branch), the traffic flow is regulated at the junction by a traffic light, preventing the vehicles to collide. The main interest of the paper is that it can serve as a justification of the continuous limit model. As far as we know, this is the first micro-macro justification of a continuous model involving two incoming roads. Note however that we assume that the periodic of the traffic light tends to infinity after scaling, an assumption that we discuss below.

Before going further, let us first recall that the micro-macro derivation of a conservation law (the LWR model in traffic flow) from a follow-the-leader model on a single line is classical \cite{MR1952895, CFGG, rigorousLWR, GoatinRossi, MR3721873}. Continuous models involving a junction separating 2 half-roads (the so-called 1:1 junction)  has been the subject of a very large literature that we do not try to review here. It is  known that the junction condition then reduces to a single flux limiter (a real number) \cite{AGDV11, AKR11, IM17, MISHRASurvey}, and the derivation of this junction condition from microscopic model has been studied thoroughly \cite{FSZ18, FoSa20}. Moreover, if there are several approaches to describe the continuous model (for instance through germs \cite{AKR11}, or through Hamilton-Jacobi equation \cite{IM17}), these approaches are now known to be equivalent: see  \cite{CFGM} for convex fluxes and \cite{FIM24} for nonconvex ones, and the references therein. 

The situation is completely different for junctions involving more than 3 roads. The main issue is that the number of possible models is huge and not restricted to a single flux limiter. In addition, approaches by conservation law with a germ condition \cite{FMR22b} and by flux limited Hamilton-Jacobi \cite{IM17} are no longer equivalent in general: see the counter-example in \cite{CFGM}.  So, in terms of modeling, the junction condition should certainly be dictated by physical considerations, as, for instance, by an underlying microscopic model. A first example in this direction is \cite{CFarma}, in which we study the micro-macro derivation of a traffic flow involving one incoming road and two out-going ones (the 1:2 junction). In this model, a fixed portion of the incoming traffic (understood in statistical terms) is dispatched in each outgoing road.  We prove in \cite{CFarma} a homogenization result, in which the limit model can be described as a flux limited Hamilton-Jacobi equation on the network. The corresponding system in terms of conservation law is, however, poorly understood up to now. As explained in \cite{CFGM}, it is {\it not} an $L^1-$contraction in general. 

The situation of an 2:1 junction is even more subtle, as it is not too natural (at least for a traffic flow model) to prescribe in advance the proportion of vehicle coming from a given incoming branch. This seems to exclude an approach by Hamilton-Jacobi equation. We introduce in \cite{CFM24} a family of junction conditions on this 2:1 network, which turns out to be $L^1-$contractive. In addition, still in  \cite{CFM24}, we derive some of these models from mesoscopic ones, in which the traffic flow is continuous, but the traffic lights discrete. The method of proof relies on standard BV estimates, on the $L^1$ contraction property of the mesoscopic models and on the construction of correctors. However we left open the more difficult derivation of these models from microscopic ones: indeed, in this discrete setting, BV estimates, $L^1$ contraction and the construction of correctors seem very challenging questions in general. The aim of the present paper to solve---at least partially and in a particular case--this issue.

Let us now describe our microscopic and macroscopic models. We consider a junction involving two in-going roads and an outgoing one. For the microscopic model, we use a classical follow-the-leader equation, which holds on each roads ``far'' from the junction:
$$
\dot X^i(t)= V(X^{\sigma(i)}(t)-X^i(t)), \qquad i\in \{1, \dots, N\}.
$$
Here $N$ is the number of vehicles, the velocity rule $V:\R_+\to \R_+$ is a given increasing and bounded map and $X^{\sigma(i)}(t)$ denotes the position of the vehicle in front of vehicle $i$ at position $X^i(t)$. To avoid collisions at the junction, a time-periodic traffic light blocks one of the two in-coming roads alternatively, the other one evolving freely: see the next subsection for details. Our main result (Theorem \ref{thm.main}) states that, when the number $N$ of particle tends to infinity and after a hyperbolic scaling, the empirical density associated with the solution of the above follow-the-leader models converges---in a suitable sense---to the solution of the following conservation law: 
$$
\begin{array}{lllll}
(i) & \rho^k\in [0,\rho_{\max}]&\qquad \text{ a.e. on}\; &(0,\infty)\times \mathcal R^k, &\; k=0,1,2,\\
(ii) & \partial_t \rho^k +(f(\rho^k))_x= 0 &\qquad \text{ on}\; &(0,\infty)\times \mathcal R^k, &\; k=0,1,2,\\
(iii) & (\rho^0(t,0^+), \rho^1(t,0^-), \rho^2(t,0^-))\in \mathcal G& \qquad \text{ for a.e.}\;  &t\in (0,\infty),&
\end{array}
$$
Here $\mathcal R^k=(-\infty, 0)\times \{k\}$ for $k=1,2$ are the incoming roads, $\mathcal R^0=(0,\infty)\times \{0\}$ is the outgoing one, the junction being at $x=0$. The flux $f: [0,\rho_{\max}]\to \R_+$ is a smooth uniformly concave map, with $f(0)=f(\rho_{\max})=0$. It is  related to the velocity law $V$ by the equality $f(r)=rV(1/r)$. The notation $ (\rho^0(t,0^+), \rho^1(t,0^-), \rho^2(t,0^-))$ stands for the strong trace of the map  $(\rho^0, \rho^1, \rho^2)$ at $x=0$, which is known to exist \cite{Ps07}. Finally, the homogeneous germ $\mathcal G$, given explicitly in formula \eqref{def.mathG} below, keeps track of the traffic light model and explains how the outgoing density is related to the incoming ones at the junction. Our main assumption on the model is that the period of the traffic light is quite long (in terms of scaling): we make this (unnatural) assumption for two reasons: the first one is that it allows the discrete model to be close of a mesoscopic model studied in \cite{CFM24}, and thus to identify the germ $\mathcal G$ explicitly.  The second reason for our assumption is that it allows some regularization property of the discrete equation to take place. We explain these two points in detail  now.   \\

The main difficulty  of our problem (compared for instance to the micro-macro derivation on 1:1 junctions  \cite{FSZ18, FoSa20} or on 1:2 junctions \cite{CFarma}) is the fact that we cannot work at the Hamilton-Jacobi level: the limit model has to be described through the germ condition, which requires---as far as we know---much finer analysis and estimates. For instance, the techniques of proof in  \cite{CFarma, FSZ18, FoSa20} rely only on $L^\infty$ bounds on the ``density'' of the discrete model in order to pass to the limit. Here we will need BV bounds and approximate Kato's inequality to do so. To overcome this issue, we borrow  several argument from the paper by Di Francesco and Rosini \cite{rigorousLWR}. Let us recall that  \cite{rigorousLWR} discusses the convergence of the follow-the-leader model to the corresponding conservation law on a simple road. For this it establishes a  BV regularization effect on the discrete solution, the fact that the discrete solution is an approximate entropy solution to the conservation law and, therefore, satisfies an approximate Kato's inequality.  We adapt these results to a follow-the-leader model stopped by  a red light (Propositions \ref{prop.BoundBVRed} and \ref{prop.rateFeu}) and then to the whole system (Proposition \ref{prop.gobBV} and Proposition \ref{prop.globalesti}). Unfortunately, the traffic light creates shocks which brutally increase the total variation of the discrete density. One of the reasons why we have to ask for long intervals between changes in the traffic light is precisely to compensate this deterioration, by using the regularization effect of \cite{rigorousLWR}. Another difficulty of our problem is the identification of the correct germ. For this, following a classical approach in homogenization, we need to build suitable correctors for the problem. This step is in general quite challenging.  However, in our setting where the period of the traffic light becomes larger and larger, we can almost argue as if we were in  a mesoscopic model in which the traffic flow is continuous, but the traffic light discrete.  For such a model, correctors have been built in  \cite{CFM24} and we use this construction to prove ultimately that the limit is an entropy solution at the junction. \\

{\bf Organization of the paper:} In the rest of the section, we introduce our main assumptions (Subsection \ref{subsec.hyp}), explicit the discrete model (Subsection \ref{subsec.micro}) and the continuous one (Subsection \ref{subsec.conti}) and finally give our main convergence result (Subsection \ref{subsec.main}). Section \ref{sec.prelim} is devoted to basic existence and properties of the discrete model. In Section \ref{sec.Esti1branch}, we first recall the main results of \cite{rigorousLWR} for scaled follow-the-leader models on a whole line, adapt them to models with a traffic light and finally state a local compactness result for scaled follow-the-leader models.  The proof of the main result is then given in Section \ref{sec.proofmain}.  

\subsection{Notation and assumptions} \label{subsec.hyp}

We collect here our main notation and assumptions.  The network is made of two incoming roads and a single outgoing one, meeting at the junction point at the origin $0$. We let $\mathcal R^j=(-\infty,0)\times \{j\}$ (for $j=1,2$) be the incoming branches, $\mathcal R^0= (0,\infty)\times \{0\}$  being the outgoing one. We consider the set $\mathcal R=\bigcup_{j=0}^2 \mathcal R^{j}\cup\{0\}$ with the topology of three half lines glued together at the origin $0$.\\

{\bf Assumption (H).} The flux $f: [0, \rho_{\max}]\to  \R_+$ of the continuous conservation law is  of class $C^2$ and uniformly concave, with $f(0)=f(\rho_{\max})=0$. We denote by $V:\R_+\to\R_+$ the velocity of the discrete model. It is defined  by $V(e)= ef(1/e)$ for $e\in [e_{\min},\infty)$, where $e_{\min}:= 1/\rho_{\max}$,  and $V(e)= 0$ on $[0,e_{\min}]$. We note that $V$ is Lipschitz continuous, nondecreasing and bounded on $\R_+$, and of class $C^2$ on $(e_{\min},\infty)$. We set $V_{\max}= \max V= f'(0)$. Finally, setting $v(r)=V(1/r)$, we  assume that $r\to rv'(r)$ is  nonincreasing on $(0,\rho_{\max})$. \\

{\it Example.} For instance, if $f(r)= Ar-Br^2$ (for $A,B>0$) and $\rho_{\max}= AB^{-1}$, then $v(r)= A-Br$ is defined on $[0, AB^{-1}]$ and is such that $r\to rv'(r)= -Br$ is nonincreasing, while $V$, defined by $V(x)= 0$ on $[0,e_{\min}]$ with $e_{\min}= A^{-1}B=1/\rho_{\max}$ and $V(x)= A-Bx^{-1}$ if $x>e_{\min}$, is nondecreasing and bounded.

\subsection{The microscopic model} \label{subsec.micro}

In the discrete (or microscopic) model, there are $N$ vehicles. The vehicles positions are encoded by the family $(X^i(t), r^i(t))_{i=1, \dots, N}$ in which: $r^i(t)\in \{0,1,2\}$ is the label of the road on which Vehicle $i$ is standing at time $t$ and $X^i(t)\in \R$ is the position of the car on the road. By definition, $r^i(t)=0$ if and only if $X^i(t)\geq 0$. We  assume that the initial condition $(X^i_0, r^i_0)$ satisfies: 
\be\label{cond.init}
X^i_0 \neq X^j_0 \qquad \text{if}\; i\neq j\;\text{and}\; r^i_0  =r^j_0.
\ee
We  check below that this  properties propagates in time. We define the index $\sigma^i(t)$ of the vehicle  in front of vehicle $i$ at time $t$ as: 
$$
\sigma^i(t)=  \left\{\begin{array}{ll}
 {\rm argmin}_j  \{X^j(t), \; X^j(t)>X^i(t)\} & \text{if} \; X^i(t)\geq 0,\\
 {\rm argmin}_j \{X^j(t),\; X^j(t)>X^i(t) \;\text{and} \; r^j(t)\in \{r^i(t),0\} \}& \text{if} \; X^j(t)<0.\end{array}\right.
$$
We set $\sigma^i(t)= \infty$ if there is no such a $j$.  Then by convention $X^\infty(t)=\infty$. We denote by $T>0$ the period of the traffic light and assume that the traffic is green for Road 1 on intervals of the form $(0,T_1)\; {\rm mod}\; [T]$ and red otherwise, where $T_1\in (0,T)$. The traffic light is viewed as a vehicle standing at $x=0$ when it is red. This yields to a dynamic of the form 
\be\label{eq.X}
\dot X^i(t)= 
\left\{ \begin{array}{ll}
V( -X^i(t)) & \text{if}\; \left\{\begin{array}{l} r^i (t) =1,\; t\in [T_1,T)\; {\rm mod}\; [T] \; \text{and} \;  X^{\sigma^i(t)}(t)\geq 0,\\
\text{or}\; r^i (t) = 2 ,\; t\in [0, T_1)\; {\rm mod}\; [T]\;\text{and} \;  X^{\sigma^i(t)}(t)\geq 0,\end{array}\right. \\
V( X^{\sigma^i(t)}(t)-X^i(t)) & \text{otherwise}.
\end{array}\right.
\ee
{\it Explanation for \eqref{eq.X}:} In \eqref{eq.X}, the first conditions take into account the traffic light: if $t\in [T_1,T)\; {\rm mod}$, the traffic light is red on Road 1, and the condition $ r^i (t) =1$ and $X^{\sigma^i(t)}(t)\geq 0$ mean that the vehicle of index $i$ is the right-most vehicle on Road 1; for this reason it is the one which has to take into account the traffic light---seen as a vehicle standing at $x=0$ and has therefore a velocity $V(0-X^i)$. The symmetric explanation applies when $t\in [0, T_1)\; {\rm mod}\; [T]$ on which the traffic light is red on Road 2. In all the other cases, the vehicle follows a standard follow-the-leader model. 

In addition, we suppose that the initial condition holds 
\be\label{eq.init}
(X^i(0), r^i(0))= (X^i_0, r^i_0)\qquad i=1, \dots, N,
\ee
and the $r^i$ are defined by 
\be\label{eq.defri}
r^i(t)= r^i_0 \; \text{as long as } X^i(t)\neq 0, \; \text{and $r^i(t)= 0$ if and only if $X^i(t)\geq 0$.}
\ee
By a solution  $(X^i, r^i)_{i=1, \dots, N}$ to  \eqref{eq.X}-\eqref{eq.init}-\eqref{eq.defri}, we mean that the $X^i$ are Lipschitz continuous, with  $\dot X^i$ and  $r^i$  c\`{a}dl\`{a}g, and such that
\be\label{cond.initpropag}
X^i(t) \neq X^j(t) \qquad \text{if}\; i\neq j\;\text{and}\; r^i(t) =r^j(t),
\ee
and that  \eqref{eq.X} holds a.e. and \eqref{eq.init} and \eqref{eq.defri} hold. We note that this implies that the $\sigma^i$ are c\`{a}dl\`{a}g as well. 

\begin{Proposition}\label{prop.existidisc} Under our standing assumption {\bf (H)}, given an initial condition  $(X^i_0, r^i_0)_{i=1, \dots, N}$ satisfying \eqref{cond.init}, there exists a unique solution  $(X^i, r^i)_{i=1, \dots, N}$ to  \eqref{eq.X}-\eqref{eq.init}-\eqref{eq.defri}. 
\end{Proposition}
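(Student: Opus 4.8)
The plan is to construct the solution by concatenation over successive time intervals on which the \emph{combinatorial configuration} of the system is frozen, and on each of which \eqref{eq.X} reduces to an ordinary differential equation with globally Lipschitz right-hand side. By the configuration at time $t$ I mean the data consisting of the vector of labels $(r^i(t))_i$, the order of the vehicles on each branch (equivalently, the leader map $(\sigma^i(t))_i$), and the phase of the traffic light (whether $t$ lies, modulo $T$, in $[0,T_1)$ or in $[T_1,T)$). Once this configuration is fixed, each velocity in \eqref{eq.X} is the composition of the Lipschitz map $V$ with an affine function of the $X^j$'s, so the Cauchy--Lipschitz theorem provides a unique maximal solution, which is $V_{\max}$-Lipschitz since $0\le \dot X^i\le V_{\max}$.

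The heart of the matter is to show that the configuration changes only finitely often on any bounded interval, so that the concatenation is well defined. I would first establish two structural invariants. \emph{Collision avoidance and order preservation:} for two consecutive vehicles $i$ and $j=\sigma^i$ on the same branch the gap $g=X^j-X^i$ obeys $\dot g=\dot X^j - V(g)\ge -V(g)$, because $\dot X^j\ge 0$; since $V(e_{\min})=0$ by assumption \textbf{(H)} and $V$ is nondecreasing, the value $g=e_{\min}$ acts as a floor and $g(t)\ge \min(g(0^+),e_{\min})>0$ for all $t$. The same computation, with the virtual vehicle placed at the origin, shows that a vehicle facing a red light stops at $-e_{\min}$ and never crosses while the light is red. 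This yields \eqref{cond.initpropag} and shows that the order of the vehicles on each branch is preserved. \emph{Monotonicity of crossings:} since $\dot X^i\ge 0$, each $X^i$ is nondecreasing, hence attains the value $0$ at most once, so there are at most $N$ crossing times in total. Combined with the fact that the light switches only at the points $kT$ and $kT+T_1$, this forces the configuration to be piecewise constant with finitely many jumps on every bounded interval: by order preservation the leader map $\sigma^i$ can change only when a vehicle changes branch (a vehicle entering or leaving the admissible set in the definition of $\sigma^i$), that is, at a crossing, or when the light switches.

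Granting this, I would define the solution inductively: starting from the configuration read off the initial data \eqref{cond.init}, solve the Lipschitz ODE until the first of either the next scheduled light switch or the first time a vehicle reaches the origin, then update the labels through \eqref{eq.defri}, recompute $(\sigma^i)$, and iterate; boundedness of the velocities excludes finite-time blow-up and produces a global solution on $[0,\infty)$. It then remains to check that the pieced-together functions solve \eqref{eq.X}--\eqref{eq.init}--\eqref{eq.defri} in the stated sense, i.e.\ that each $X^i$ is Lipschitz with $\dot X^i$ and $r^i$ c\`adl\`ag. The $X^i$ are continuous by construction and $V_{\max}$-Lipschitz; the $r^i$ are piecewise constant and right-continuous by \eqref{eq.defri}, and $\sigma^i$ inherits the same property. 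Uniqueness follows by the same induction: two solutions sharing a configuration on an interval coincide there by Cauchy--Lipschitz, hence reach the next crossing or switch simultaneously and in the same configuration, so they agree on all of $[0,\infty)$.

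The main obstacle is the bookkeeping at the junction hidden in the second paragraph: one must rule out spurious, possibly accumulating, changes of the leader map produced by the merging of the two incoming branches into $\mathcal R^0$, and check that each admissible change is compatible with a c\`adl\`ag velocity. The structural reason it works is that the only genuinely new interaction---the frontmost vehicle of an incoming branch acquiring as its leader a vehicle that has just crossed onto $\mathcal R^0$---always occurs while that branch has a red light, so that the corresponding velocity $V(-X^i)$ is insensitive to the leader's exact position as long as that position satisfies the clause $X^{\sigma^i}\ge 0$ of \eqref{eq.X}. Making this velocity matching precise at the crossing times, together with the gap floor $e_{\min}$, is the crux; the remaining steps are routine applications of Cauchy--Lipschitz and of the boundedness of $V$.
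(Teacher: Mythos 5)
Your proposal is correct, and its core mechanism coincides with the paper's: both arguments concatenate Cauchy--Lipschitz solutions of frozen-configuration ODEs, both propagate a no-collision invariant, and both rest on the observation that the only leader reassignment created by the junction---the frontmost vehicle of the blocked branch acquiring a freshly crossed vehicle as its new leader---happens under a red light, where the velocity $V(-X^i)$ is insensitive to the leader's exact position. The differences lie in the decomposition and in the gap lemma, and each choice buys something. The paper slices time only at the light switches: it argues by induction on the phases $[nT,nT+T_1]$ and $[nT+T_1,(n+1)T]$, noting that inside one phase the effective velocity field never changes at all (a crossing modifies the labels $r^i$ and $\sigma^{i_2}$, but not the right-hand side of \eqref{eq.X}, precisely by your insensitivity remark), so a single Lipschitz ODE per phase suffices and no counting of crossing events is needed. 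You slice additionally at every crossing, which is why you must rule out accumulation of events (each $X^i$ is nondecreasing, hence at most $N$ label changes); this is correct but strictly more work than the paper's version. Conversely, for collision avoidance the paper does not run your differential inequality $\dot g\ge -V(g)$ with its floor at $e_{\min}$: it quotes the gap-preservation lemma of Di Francesco and Rosini \cite[Lemma 3.1]{rigorousLWR}, and reduces the branch stopped at the red light to an unconstrained follow-the-leader system by appending virtual stationary vehicles at $0,\rho_{\max}^{-1},2\rho_{\max}^{-1},\dots$; your floor argument is more elementary and self-contained, while the paper's embedding is set up here because it is reused later (Propositions \ref{prop.BoundBVRed} and \ref{prop.rateFeu}). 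One imprecision to repair in your write-up: a blocked vehicle may sit at $x=0$ for a positive time, so ``$X^i$ attains the value $0$ at most once'' should be replaced by ``the label change $r^i\mapsto 0$ occurs at most once, since $X^i$ is nondecreasing''; also, when checking \eqref{cond.initpropag} you should say explicitly that two vehicles issued from different incoming branches can never meet at $x=0$, because at any given time one of the two branches has a red light and its frontmost vehicle cannot reach the origin---this is exactly the case the paper treats in its final case analysis.
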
 

The proof of Proposition \eqref{prop.existidisc} is given is Section \ref{sec.prelim}. 

\subsection{The limit model: a conservation law on the network} \label{subsec.conti}

We define here the continuous model. It is a conservation law on the network with a homogeneous germ $\mathcal G$ at the junction. It  takes the form:  
\be\label{eq.macro}
\begin{array}{lllll}
(i) & \rho^k\in [0,\rho_{\max}]&\qquad \text{ a.e. on}\; &(0,\infty)\times \mathcal R^k, &\; k=0,1,2,\\
(ii) & \partial_t \rho^k +(f(\rho^k))_x= 0 &\qquad \text{ on}\; &(0,\infty)\times \mathcal R^k, &\; k=0,1,2,\\
(iii) & (\rho^0(t,0), \rho^1(t,0), \rho^2(t,0))\in \mathcal G& \qquad \text{ for a.e.}\;  &t\in (0,\infty),&
\end{array}
\ee
In order to define the notion of solution of \eqref{eq.macro}, let us first  recall the notion of germ on a junction. Following \cite{AKR11,MFR22}, we say that the pair (Kru\u{z}kov entropy, entropy flux) is given, for $p,\bar p \in \R$, by  
$$
\eta(\bar p,p)=|p-\bar p| , \qquad q(\bar p,p) = \mbox{sign}(p-\bar p)(f(p)-f(\bar p)).
$$
In the box $Q:=[0,\rho_{\max}]^3$, we let $Q^{RH}$ be the subset of $Q$ satisfying Rankine-Hugoniot condition:
\begin{equation}\label{eq::e**2}
Q^{RH}:=\left\{P=(p^0,p^1,p^2)\in Q,\quad f(p^0)=f(p^1)+f(p^2)\right\}
\end{equation}
Given  $P=(p^0,p^1,p^2)$, $\bar P=(\bar p^0,\bar p^1,\bar p^2) \in Q$, we define the dissipation by
$$
D(\bar P,P):= \left\{q^1(\bar p^1,p^1)+q^2(\bar p^2,p^2)\right\} - q^0(\bar p^0,p^0)=\mbox{IN}-\mbox{OUT}
$$
A set $G\subset Q$ is said to be a germ  if it satisfies
$$\left\{\begin{array}{ll}
G\subset Q^{RH}&\quad \mbox{(Rankine-Hugoniot)}\\
D(\bar P,P)\ge 0\quad \mbox{for all}\quad \bar P,P\in G &\quad \mbox{(dissipation)}\\
\end{array}\right.$$
We say that $G$ is maximal if for every $P\in Q$, we have
$$\left(D(\bar P,P)\ge 0\quad \mbox{for all}\quad \bar P\in G\right)\quad \Longrightarrow\quad P\in G.$$

For our problem, the limit germ ${\mathcal G}$ at the junction is defined as follows (see Example~1 and Proposition 2.6 in \cite{CFM24}).  Let $\theta=T_1/T\in (0,1)$ be the proportion of time the traffic-light is green on Road 1. We set $\theta^0=1$, $\theta^1=\theta$ and $\theta^2=1-\theta$. Then $\mathcal G$ is given by 
\be\label{def.mathG}
{\mathcal G}:=\left\{P=(p^0,p^1,p^2)\in Q^{RH},\quad \left|\begin{array}{ll}
0\le f(p^k)\le \theta^k \max f ,&\quad k=0,1,2\\
\\
f^-(p^k)\ge \theta^k f^-(p^0),&\quad k=1,2\\
\end{array}\right.\right\},
\ee
where $f^-$ is the smallest nonincreasing map above $f$. 
Following \cite[Theorem 2.1 and Proposition 2.6]{CFM24}, $\mathcal G$ is a maximal germ generated by the set 
\be\label{def.E}
E=\Gamma \cup \{P_0, P_1, P_2, P_3\}
\ee
where
\be\label{def.Gamma}
\Gamma = \{((f^{-})^{-1}(\lambda), (f^{-})^{-1}(\theta_1 \lambda), (f^{-})^{-1}(\theta_2\lambda)), \; \lambda \in [0, \max f]\}.
\ee
 and 
\begin{align*}
&P_1= ((f^{+})^{-1}(\theta_1 \max f), (f^{-})^{-1}(\theta_1\max f), 0), \\
&P_2=  ((f^{+})^{-1}(\theta_2 \max f),0,  (f^{-})^{-1}(\theta_2\max f)), \\ 
& P_3 =(0,0,0) , \qquad P_0= ((f^{-})^{-1}(0), (f^{-})^{-1}(0), (f^{-})^{-1}(0))= (\rho_{\max},\rho_{\max},\rho_{\max}) \in \Gamma .
\end{align*}
This means that 
$$
\left(D(\bar P,P)\ge 0\quad \mbox{for all}\quad \bar P\in E\right)\quad \Longrightarrow\quad P\in \mathcal G.
$$


\begin{Definition}{\bf (Entropy solution of (\ref{eq.macro}))}\label{def.solwithgerm}\\ 
Given a maximal germ $\mathcal G\subset Q$ and an initial condition $\bar \rho\in L^\infty(\mathcal R)$ such that $\bar \rho^k\in [0, \rho_{\max}]$ a.e. for $k=0,1,2$, we say that a map  $\rho\in L^\infty((0,\infty)\times \mathcal R)$ is an entropy solution of \eqref{eq.macro} if, for any $k=0,1,2$, $\rho^k$ is a Kruzkhov entropy solution of \eqref{eq.macro}-(ii) on $\mathcal R^j$, if {its trace at $t=0$ is $\bar \rho$} and if its trace $\rho(\cdot,0)=(\rho^0(\cdot,0^+),\rho^1(\cdot,0^-),\rho^2(\cdot,0^-))$ at $x=0$ belongs to $\mathcal G$: 
$$
\rho(t,0)\in \mathcal G\qquad a.e. \; t\geq0.
$$
\end{Definition} 

For the germ $\mathcal G$ given by \eqref{def.mathG} we proved in \cite{CFM24} that there exists a unique solution to \eqref{eq.macro}. 

\begin{Remark} \label{rem.entropsol} Because the germ $\mathcal G$ is maximal and generated by the set $E$, Definition \ref{def.solwithgerm} is equivalent to the following entropy inequality (see  \cite{FMR22b,MFR22}): 
$$
\sum_{j=0}^2\left\{ \int_0^\infty \int_{\mathcal R^j} \eta(u^j,\rho^j)\partial_t \phi^j+ q^j(u^j,\rho^j) \partial_x\phi^j +\int_{\mathcal R^j} \eta(u^j,\bar \rho^j)\phi^j(0,x)\right\} \geq 0
$$
for any $u=(u^j) \in E$ and any continuous nonnegative test function $\phi:[0,\infty)\times \mathcal R\to [0,\infty)$ with a compact support and such that $\phi^j:=\phi_{|[0,+\infty)\times (\mathcal R^j\cup \left\{0\right\})}$ is $C^1$ for any $j=0,1,2$. 
\end{Remark}

\subsection{Main result}\label{subsec.main}

Let $(X^i,r^i)$ be the solution of the model with traffic light defined in Subsection \ref{subsec.micro}, i.e., satisfy \eqref{eq.X}-\eqref{eq.init}-\eqref{eq.defri}. We set 
$$
x^{\ep,i}(t)= \ep X^i(\ep^{-1}t)
$$
and 
$$
y^{\ep,i}(t) = \frac{1}{X^{\sigma^i(\ep^{-1}t)}(\ep^{-1}t)-X^i(\ep^{-1}t)} = \frac{\ep}{x^{\ep, \sigma^i(\ep^{-1}t)}(t)-x^{\ep,i}(t)}.
$$
We define 
\be\label{def.rho}
\rho^{\ep,k}(t,x)
=
\left\{\begin{array}{ll} 
\ds  \sum_{r^i(\ep^{-1}t) =k} y^{\ep, i}(t)  {\bf 1}_{[x^i(t), x^{\ep,\sigma^i(\ep^{-1}t)}(t))} & \text{if}\; k=1, 2, \\
\ds  & \\
\ds   \sum_{r^i(\ep^{-1}t) = 0} y^{\ep, i}(t)  {\bf 1}_{[x^i(t), x^{\ep,\sigma^i(\ep^{-1}t)}(t))} & \text{if}\; k=0.
\end{array}\right.
\ee
Note for later use that $\rho^{\ep,k}\rho^{\ep,0}=0$ for $k=1,2$. Let us warn the reader that the $\rho^{\ep, k}_X$ are discontinuous in time (even in a weak topology). Discontinuities occur as one of the vehicle reaches the junction $x=0$. 

\begin{Theorem}\label{thm.main} Suppose, in addition to our standing assumption ${\bf (H)}$, that 
\be\label{Nep=O1}
N\ep =O(1), 
\ee
that the initial condition satisfies \eqref{cond.init} and
\be\label{hyp.condinti}
X^{\sigma^i(0)}_0\geq X^i_0 +\rho_{\max}^{-1}\qquad \forall i\in \{1, \dots, N-1\},
\ee
with a BV bound and a uniformly compact support:
\be\label{BVt=0}
TV(\rho^{\ep,k}(0,\cdot))\leq C_0,\qquad \text{Spt}(\rho^{\ep,k}(0,\cdot))\subset [-C_0,C_0] \qquad \text{for}\; k=0,1,2, 
\ee
and that there exists $\bar \rho\in L^\infty(\mathcal R)$ such that 
\be\label{limt=0}
\rho^{\ep}(0,\cdot) \to \bar \rho \qquad \text{in}\; L^1(\mathcal R)\qquad \text{as }\; \ep\to 0^+.
\ee
Assume also that the traffic light is such that there exists $\alpha\in (2/3,1)$ and $\theta\in (0,1)$ with 
\be\label{hyp.T}
T= \ep^{-\alpha} \qquad \text{and}\qquad T_1 = \theta T.
\ee
Then  $(\rho^{\ep,k})_{k=0,1,2}$ converges in $L^1_{loc}$ to the solution $(\rho^k)_{k=0,1,2}$ of \eqref{eq.macro}, for the germ $\mathcal G$ given by \eqref{def.mathG}. 
\end{Theorem}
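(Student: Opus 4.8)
The plan is to combine the single-branch estimates imported from \cite{rigorousLWR} with the germ-generation criterion of Remark \ref{rem.entropsol}, thereby reducing the junction analysis to a finite family of entropy inequalities tested against the generators in the set $E$. First I would secure compactness: the uniform BV bound of Proposition \ref{prop.gobBV}, the uniform $L^\infty$ bound $\rho^{\ep,k}\in[0,\rho_{\max}]$, and the uniformly compact supports yield relative compactness of $(\rho^{\ep,k})$ in $L^1_{loc}((0,\infty)\times\mathcal R^k)$; along a subsequence I extract a limit $(\rho^k)$ satisfying \eqref{eq.macro}-(i) and admitting a strong trace at $x=0$ by \cite{Ps07}. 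The delicate point already at this stage is the time-discontinuity of $\rho^{\ep,k}$ flagged after \eqref{def.rho}: each vehicle crossing the junction and each switch of the light injects a shock, so a space-BV bound uniform in $\ep$ can only hold because the regularizing effect of the flow (Propositions \ref{prop.BoundBVRed} and \ref{prop.rateFeu}) compensates these jumps---this is the first place the long-period scaling \eqref{hyp.T} enters.

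Second, in the interior of each branch I would run the Di Francesco--Rosini scheme: via the Lagrangian-to-Eulerian correspondence the scaled follow-the-leader density is an \emph{approximate} Kruzkov entropy solution of \eqref{eq.macro}-(ii), satisfying an approximate Kato inequality with an error that is $o(1)$ as $\ep\to0$ (this is the content of Proposition \ref{prop.globalesti}). Passing to the limit in this inequality shows that each $\rho^k$ is a genuine Kruzkov entropy solution on $\mathcal R^k$, so that only the junction condition \eqref{eq.macro}-(iii) remains to be verified.

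Third, and this is the crux, I would identify the trace with $\mathcal G$. By Remark \ref{rem.entropsol} it suffices to prove, for every generator $u=(u^j)\in E$, the single inequality
$$
\sum_{j=0}^2 \int_0^\infty\!\!\int_{\mathcal R^j}\bigl(\eta(u^j,\rho^j)\partial_t\phi^j + q^j(u^j,\rho^j)\partial_x\phi^j\bigr) + \sum_{j=0}^2\int_{\mathcal R^j}\eta(u^j,\bar\rho^j)\phi^j(0,x)\ge 0,
$$
which I would obtain by passing to the limit in its discrete counterpart. The obstruction is that the discrete entropy balance at the junction carries a remainder of indefinite sign, concentrated in the transients right after each switch. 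Here the scaling \eqref{hyp.T} plays a double role. In rescaled time the period is $\ep^{1-\alpha}\to0$, so the light oscillates infinitely fast and only its time-average---encoded by the weights $\theta^k$---persists in the limit, a temporal homogenization. Simultaneously the period in the original time is $\ep^{-\alpha}\to\infty$, long enough for the follow-the-leader flow on each branch to relax, within each green or red window, to the quasi-stationary profile given by the correctors built for the mesoscopic model in \cite{CFM24}. Inserting these correctors for each $u\in E$ into the discrete balance reproduces exactly the germ relations $0\le f(p^k)\le\theta^k\max f$ and $f^-(p^k)\ge\theta^k f^-(p^0)$ of \eqref{def.mathG}, with a remainder that vanishes precisely because $\alpha>2/3$ forces the BV-regularization of \cite{rigorousLWR} to dominate the total variation injected by the switches.

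Finally, since $\mathcal G$ is maximal and \eqref{eq.macro} has a unique entropy solution by \cite{CFM24}, the subsequential limit is in fact the unique solution; the usual argument---every subsequence has a further subsequence converging to the same limit---then upgrades subsequential convergence to convergence of the whole family, completing the proof. I expect the junction step to be by far the hardest part: reconciling the infinitely fast time-oscillation of the light (needed for homogenization) with the relaxation-within-a-window (needed to invoke the mesoscopic correctors) is exactly what pins down the admissible range $\alpha\in(2/3,1)$, and controlling the discrete entropy remainder uniformly near $x=0$ is where the full strength of the BV estimates is required.
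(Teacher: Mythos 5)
Your junction analysis (the third step) is essentially the paper's argument: reduce the trace condition via Remark \ref{rem.entropsol} to entropy inequalities against the generators $p\in E$, insert the scaled correctors of Theorem \ref{thm.corrector} into the approximate Kato inequality of Proposition \ref{prop.globalesti}, and use the scaling \eqref{hyp.T} with $\alpha>2/3$ (and the choice $h=\ep^{-1/3}$) to kill the remainder. The final uniqueness/whole-sequence argument also matches. However, your first step---compactness---rests on a false premise, and this is a genuine gap. You invoke ``the uniform BV bound of Proposition \ref{prop.gobBV}'' to get $L^1_{loc}$ compactness, but that bound is \emph{not} uniform in $\ep$: it reads $K_0\bigl(1+\ep^{-1}(T_1^{-1}\vee(T-T_1)^{-1})\bigr)$, which under \eqref{hyp.T} is of order $\ep^{\alpha-1}\to\infty$ since $\alpha<1$. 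The paper states this explicitly right after Proposition \ref{prop.gobBV} (``the estimate degrades as $\ep\to 0$ \dots even under assumption \eqref{hyp.T}''). Your suggested repair---that the regularization of Propositions \ref{prop.BoundBVRed} and \ref{prop.rateFeu} compensates the shocks injected by the light so as to make the bound uniform---cannot work: those propositions are precisely the ingredients of the proof of Proposition \ref{prop.gobBV}, and the outcome still blows up, because the $\sim\ep^{\alpha-1}$ switches of the light each inject an amount of variation that the $C(1+t^{-1})$ regularization of \cite{rigorousLWR} only damps down to order $\ep^{\alpha-1}$ per period, not to order one. So no Helly-type argument is available.

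The paper obtains compactness (and the interior entropy property) by a completely different mechanism, Proposition \ref{prop.localcompact}: one passes to the primitive $u^\ep(t,x)=\int_{-\infty}^x\rho^\ep(t,y)\,dy$, shows it is uniformly Lipschitz using only the $L^\infty$ bound $\rho^\ep\le\rho_{\max}$ and $N\ep=O(1)$, proves that its limit is a viscosity solution of $\partial_t u+f(\partial_x u)=0$, and then uses an approximate weak formulation together with a Young-measure argument and the strict concavity of $f$ to upgrade weak-$*$ convergence of $\rho^\ep$ to strong $L^1$ convergence; no BV bound enters at all. Relatedly, you misattribute the role of the (degenerating) BV estimate: its purpose in the paper is not compactness but the control of the error term $C\ep K h^2$ in Proposition \ref{prop.globalesti}, where $\ep K\sim\ep^{\alpha}$ and $h^2=\ep^{-2/3}$ give exactly the $O(\ep^{\alpha-2/3})$ remainder that forces $\alpha>2/3$. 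You also attribute the interior entropy property to Proposition \ref{prop.globalesti}, whereas the paper derives it from Proposition \ref{prop.localcompact}; this is minor, but it compounds the structural confusion: in the paper's architecture, the Kato-type inequality is reserved for the comparison with the mesoscopic correctors at the junction, while everything away from $x=0$ is handled by the Hamilton--Jacobi/Young-measure route.
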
 

The proof requires several preliminary  estimates and is given at the end of Section \ref{sec.proofmain}. Note that  the $(X^i,r^i)$---even before scaling---actually depend on $\ep$ because the period of the  traffic light depends on $\ep$ (in assumption \eqref{hyp.T}). We do not make this dependence explicit for simplicity of notation. 

\section{Preliminary results} \label{sec.prelim}

In this section, we discuss some basic existence results and estimates for the discrete system.
\subsection{Existence of a solution for the discrete model}

Here we prove Proposition \ref{prop.existidisc} and show various basic properties of the microscopic model. 

\begin{proof}[Proof of Proposition \ref{prop.existidisc}] The difficulty to prove the well-posedness of the solution of the discrete model is the discontinuity of the dynamics (through the discontinuity of the $r^i$ and the $\sigma^i$ with respect to the position the vehicles). Discontinuities occur when one of the vehicles crosses the junction $x=0$. 

In view of assumption \eqref{cond.init} on the initial condition, the local in time existence and uniqueness  is straightforward, since the maps $r^i$ and $\sigma^i$ remain constant on a short time interval after $t=0$.  For the proof of global existence and uniqueness, as well as for  \eqref{cond.initpropag}, we argue by induction on time intervals of the form $[nT, (n+1)T]$. Let us assume that the solution exists and is unique on $[0,nT]$ and satisfies \eqref{cond.initpropag} up to time $nT$. Then there exists a maximal interval $[nT, nT+\tau)\subset [nT, nT+T_1]$ on which the solution exists and satisfies \eqref{cond.initpropag}. Let us check that $\tau= T_1$ and that \eqref{cond.initpropag} holds at $nT+T_1$. For this, let $i_2={\rm argmax}_i \{X^i(nT), \; r^i(nT)=2\}$. We  note that $r^{i_2}(t)= 2$ on $[nT, nT+\tau)$ since $X^{i_2}(Nt)<0$ and $\dot X^{i_2}(t)= V(-X^{i_2}(t))$ on this interval, with $V$ Lipschitz and $V(0)=0$. Therefore the $\sigma^i$ remain constant on $[nT, nT+\tau)$ for any  $i\neq i_2$, and thus the solution is unique on that interval. On $[nT, nT+\tau)$, we claim that 
\be\label{maueqk:jsfdgj}
\inf_{\{i, \; r^i(t)\in \{0,1\}\}} (X^{\sigma^i(t)}(t)-X^i(t)) \geq \inf_{\{i, \; r^i(t)\in \{0,1\}\}} (X^{\sigma^i(nT)}(nT)-X^i(nT)) =:\alpha_1>0. 
\ee
This is a  consequence of  \cite[Lemma 3.1]{rigorousLWR} or \cite[Proposition 4.1]{rigorousLWRbis} and the fact that, on this time interval, the indices $\sigma^i$ remain constant and the family $(X^i)_{ \{i, \; r^i(t)\in \{0,1\}\}}$ solves the unconstrained problem
\be\label{eq.unconstr??}
\dot X^i(t) = V(X^{\sigma^i(t)}(t)-X^i(t)), 
\ee
(with convention $\dot X^i(t)=V_{\max}$ if $\sigma^i(t)=\infty$). Let us now consider the family $(X^i)_{\{i, \; r^i(nT)=2\}}$. Let us set 
$$
X^i(t) = (i-(N+1)) \rho_{\max}^{-1}, \qquad i\in \N,\; i\geq N+1, \; t\geq nT. 
$$
We note that, on the time-interval $[nT, nT+\tau)$, the extended family $(X^i)_{\{i, \; r^i(nT)=2\; {\text or} \; i\geq N+1\}} $ solves an unconstrained problem of the form \eqref{eq.unconstr??} because $V(\rho_{\max}^{-1})=0$. Thus we can apply again \cite[Lemma 3.1]{rigorousLWR} and infer that, on $[nT, nT+\tau)$,  
\be\label{maueqk:jsfdgj2}
\inf_{\{i, \; r^i(t)=2, \; i\neq i_2\}} (X^{\sigma^i(t)}(t)-X^i(t)) \geq \inf_{\{i, \; r^i(nT)=2 \; {\text or} \; i\geq N+1\}} (X^{\sigma^i(nT)}(nT)-X^i(nT)) =:\alpha_2>0. 
\ee
In the particular case of $i_2$, one gets the sharper estimate 
\be\label{maueqk:jsfdgj3}
-X^{i_2}(t)\geq \alpha_2
\ee
because the vehicle in front of $X^{i_2}$ is $X^{N+1}\equiv 0$. 
We now show that \eqref{maueqk:jsfdgj}, \eqref{maueqk:jsfdgj2}  and \eqref{maueqk:jsfdgj3} imply that  \eqref{cond.initpropag} holds up to time $nT+\tau$. Indeed, if not, there exists  $i\neq j$ with $X^i(\tau) = X^j(\tau)$ and $r^i(\tau) =r^j(\tau)$. As \eqref{cond.initpropag} holds before $nT+\tau$, we can assume to fix the ideas that there exists $t_k\to \tau^-$ such that $X^i(t_k)<X^j(t_k)$ for all $k$. As $r^i$ and $r^j$ are c\`{a}dl\`{a}g, we can also assume that $r^i(t_k)$ and $r^j(t_k)$ do not depend on $k$. Note first that, if $r^i(t_0)\in \{0,1\}$ and $r^j(t_0)\in \{0,1\}$, or if $r^i(t_0)=r^j(t_0)=2$, then, by the definition of $\sigma^i$ and \eqref{maueqk:jsfdgj} or \eqref{maueqk:jsfdgj2} (note that in this case, $i\neq i_2$), 
$$
X^j(t_k)- X^i(t_k)\geq X^{\sigma^i(t_k)}(t_k)- X^{i}(t_k) \geq   \alpha_1\wedge \alpha_2>0.
$$
Letting $k\to \infty$ leads to a contradiction with the equality  $X^i(\tau) = X^j(\tau)$. We now assume that $r^i(t_0)=2$ and $r^j(t_0)\in \{0,1\}$. As $r^i(\tau) =r^j(\tau)$, this means that $X^i(\tau) = X^j(\tau)=0$. But then
$$
-X^i(t_k)\geq -X^{i_2}(t_k) \geq \alpha_2>0, 
$$
which leads to a contradiction with the fact that $X^i(\tau) =0$ by letting $k\to\infty$. The opposite case $r^i(t_0)\in \{0,1\}$ and $r^j(t_0)=2$ is similar. Therefore \eqref{cond.initpropag} holds up to time $nT+\tau$, so that $\tau=T_1$ holds. We can argue symmetrically on the time interval $[nT+T_1, (n+1)T]$ and conclude by induction. 
\end{proof}

\subsection{Uniform bound on $\rho^\ep_X$} 

Next we show that $\rho^\ep_X$ is bounded by $\rho_{\max}$ far from the junction. Let us start with a remark. 

 \begin{Lemma}\label{lem.x+1-xpetit}  For any $t$ and $i$, if $X^{\sigma^i(t)}(t)-X^i(t) < \rho_{\max}^{-1}$, then $X^i(t)<0$ and  $X^{\sigma^i(t)}(t)\geq 0$. 
 \end{Lemma}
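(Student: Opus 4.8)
The plan is to reduce the statement to the single kinematic fact that \emph{two consecutive vehicles on a common road always keep a gap at least $e_{\min}=\rho_{\max}^{-1}$}, and then to read off the two conclusions from the definition of $\sigma^i$. Fix $t$ and $i$, write $j=\sigma^i(t)$ and suppose $d:=X^{j}(t)-X^i(t)<e_{\min}$, recalling $X^j(t)>X^i(t)$. If $X^i(t)\ge 0$ then $r^i(t)=0$ and, since the leader is then the nearest vehicle ahead irrespective of its road, $X^{j}(t)>X^i(t)\ge 0$ forces $j$ onto $\mathcal R^0$ as well; the same-road gap bound would give $d\ge e_{\min}$, a contradiction, so $X^i(t)<0$. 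Writing $k=r^i(t)\in\{1,2\}$, the definition of $\sigma^i$ on an incoming road imposes $r^{j}(t)\in\{k,0\}$. If $r^{j}(t)=k$ then $X^{j}(t)<0$ and $i,j$ are consecutive on $\mathcal R^k$, so again $d\ge e_{\min}$, a contradiction. The only remaining possibility is $r^{j}(t)=0$, i.e. $X^{j}(t)\ge 0$, which is exactly the conclusion.

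It remains to prove the same-road gap bound. Since $V\ge 0$, every $X^i$ is nondecreasing, so no vehicle returns to $\{X<0\}$; together with \eqref{cond.initpropag}, which forbids same-road collisions, the order on each road is preserved and the vehicles present on an incoming road $\mathcal R^k$ leave it one at a time through the junction. Each $X^i$ being Lipschitz, every gap $t\mapsto X^{j}(t)-X^i(t)$ is continuous. Now consider a pair $(i,j)$ consecutive on a common road. If they lie on $\mathcal R^0$, or on a common incoming road (so that $i$ is \emph{not} the right-most vehicle of that road and $j=\sigma^i$ satisfies $X^{j}<0$), then $i$ is never in the traffic-light regime of \eqref{eq.X}, whose first line requires $X^{\sigma^i}\ge 0$. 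Hence $\dot X^i=V(X^{j}-X^i)=V(d)$ along this pair, and since $V\equiv 0$ on $[0,e_{\min}]$ while $\dot X^{j}\ge 0$, we get $\dot d\ge 0$ whenever $d\le e_{\min}$. Thus a same-road gap cannot cross the level $e_{\min}$ downward, and stays $\ge e_{\min}$ as long as the pair is consecutive, provided it was $\ge e_{\min}$ when the pair was born.

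The delicate point — and the main obstacle — is that newly formed same-road pairs are born with gap $\ge e_{\min}$. No new interior pair appears on an incoming road (vehicles only leave $\mathcal R^k$, from the front), so all pairs present at $t=0$ have gap $\ge e_{\min}$ by \eqref{hyp.condinti}; new pairs appear only on $\mathcal R^0$, when a vehicle crosses $x=0$ and enters as the left-most outgoing vehicle. Here the key observation is that a vehicle can cross the junction \emph{only} in the follow-the-leader regime: in the traffic-light regime $\dot X^i=V(-X^i)$, and since $V$ is Lipschitz with $V(0)=0$, writing $Y=-X^i$ gives $\dot Y=-V(Y)\ge -LY$, so $Y(t)\ge Y(t_0)e^{-L(t-t_0)}>0$ and $X^i$ stays strictly negative. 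Consequently, at the instant $i$ reaches $X^i=0$ one has $\dot X^i=V(X^{j})$, and $X^i$ can rise to $0$ only if this velocity is positive in the final approach, i.e. $X^{j}>e_{\min}$; by continuity the new outgoing pair $(i,j)$ is born with gap $\ge e_{\min}$ (and if $\sigma^i=\infty$ at the crossing, $i$ has no outgoing leader and forms no pair). This is exactly where the merge of the two incoming roads must be controlled: vehicles entering $\mathcal R^0$ arrive from both $\mathcal R^1$ and $\mathcal R^2$, yet each can only advance past $x=0$ while respecting $V(X^{j}-X^i)$, so the minimal gap is never violated. Combining the three steps yields the same-road bound for all $t$ and completes the proof.
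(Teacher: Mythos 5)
Your architecture is genuinely different from the paper's, and it is viable: the paper fixes a time at which the conclusion fails and traces the configuration \emph{backward} in time (since $V\equiv 0$ on $[0,e_{\min}]$, the follower is frozen while its gap is small, so the small gap persists back to $t=0$ and contradicts \eqref{hyp.condinti}), whereas you propagate \emph{forward} the invariant that consecutive same-road pairs keep a gap at least $e_{\min}$, and classify how such pairs are born. Your reduction of the Lemma to this invariant is correct, and so is the barrier argument for already-existing same-road pairs (the follower of such a pair is never in the traffic-light regime of \eqref{eq.X}, so $\dot d\ge -V(d)\ge 0$ whenever $d\le e_{\min}$).

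The gap is in the birth-gap claim at a crossing, precisely the step ``$X^i$ can rise to $0$ only if this velocity is positive in the final approach, i.e.\ $X^{j}>e_{\min}$; by continuity the new outgoing pair $(i,j)$ is born with gap $\ge e_{\min}$''. The function whose continuity you invoke, $s\mapsto X^{\sigma^i(s)}(s)-X^i(s)$, is only c\`adl\`ag: $\sigma^i$ can jump \emph{during the final approach}. Concretely, a vehicle $m$ from the other incoming road may cross just before $i$ does; at that instant $m$ becomes $i$'s leader with gap $X^m-X^i=-X^i$, which can be far below $e_{\min}$, so positivity of $\dot X^i$ at earlier times tells you nothing, via continuity alone, about the gap to the leader $i$ actually has at its own crossing time $t_c$. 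This is exactly the delicate point, and it is where the paper spends most of its effort (proving $\sigma^i(t^-)=\sigma^i(t)$ and then working on the maximal interval where $\sigma^i$ is constant). Your argument can be repaired within your own framework: fix $j=\sigma^i(t_c)$; since $X^j(t_c)>X^i(t_c)=0$, the vehicle $j$ lies on $\mathcal R^0$ at $t_c$, hence $X^j\ge 0$ and $r^j=0$ on some interval $(\tau_j,t_c]$ with $\tau_j<t_c$ (by \eqref{cond.initpropag} two vehicles cannot cross simultaneously), so $j$ is a \emph{candidate} leader of $i$ on $(\tau_j,t_c)$. Pick $s_n\to t_c^-$ with $\dot X^i(s_n)>0$ (possible since $X^i$ increases to $0$); the traffic-light regime is excluded for large $n$ simply because $-X^i(s_n)<e_{\min}$ there makes $V(-X^i(s_n))=0$ --- which also makes your Gronwall step unnecessary --- so $\dot X^i(s_n)=V\bigl(X^{\sigma^i(s_n)}(s_n)-X^i(s_n)\bigr)>0$, whence $X^{j}(s_n)-X^i(s_n)\ge X^{\sigma^i(s_n)}(s_n)-X^i(s_n)>e_{\min}$ because $\sigma^i(s_n)$ minimizes the gap among candidates. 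Continuity of the \emph{fixed-pair} gap $X^j-X^i$ then gives $X^j(t_c)\ge e_{\min}$. As written, however, the proof is incomplete at the one point where the two-road structure of the junction actually intervenes.
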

  
 \begin{proof} Assume on the contrary that there exists a time $t\geq 0$ such that $X^{\sigma^i(t)}(t)-X^i(t) < \rho_{\max}^{-1}$ and either  $X^i(t)\geq 0$ or  $X^{\sigma^i(t)}(t)< 0$. Note that, by assumption \eqref{hyp.condinti} on the initial condition, $t$ is positive. 
 
  Let us first assume that $X^{\sigma^i(t)}(t)< 0$. Then, for any $s\in [0,t]$,  $\sigma^i(s)=\sigma^i(t)$ and $X^{\sigma^i(s)}(s)< 0$. Let $(a,t]$ be the largest interval on which $X^{\sigma^i(s)}(t)-X^i(s) < \rho_{\max}^{-1}$. As $\dot X^i(s) = V(X^{\sigma^i(s)}(t)-X^i(s))=0$ on $(a,t]$, we have $X^i(a)=X^i(t)$ and thus  
 $$
  X^{\sigma^i(a)}(a)-X^i(a) \leq X^{\sigma^i(t)}(t)-X^i(t)  <  \rho_{\max}^{-1}. 
 $$
Hence $a=0$, which contradicts our assumption \eqref{hyp.condinti}. 
 
 We now assume that $X^i(t)\geq 0$. We first prove that $\sigma^i(t^-)= \sigma^i(t)$. Indeed, if there is $\ep>0$ such that $X^i(s)\geq 0$ in $[t-\ep, t]$,  this is obvious because $\sigma^i=0$ on $[t-\ep, t]$. Otherwise, $X^i(s)<0$ on $[0,t)$ and $X^i(t)=0$. Note that $X^{\sigma^i(t)}(t)>0$ by definition, so that there exists $\ep>0$  small enough such that $X^{\sigma^i(t)}(s)>0$ for any $s\in [t-\ep, t]$. We also assume that $\sigma^i(s)= \sigma^i(t^-)$ on $[t-\ep,t]$, which is possible for $\ep>0$ small enough since $\sigma^i$ is c\`{a}dl\`{a}g. As $X^i(s)<0$ and $X^i(t)=0$, $\dot X^i\not\equiv 0$ on $[t-\ep, t]$ and thus there exists $s_n\to t^-$ such that $X^{\sigma^i(t^-)}(s_n)-X^i(s_n)> \rho_{\max}^{-1}$. Then, for $n$ large, $X^{\sigma^i(t^-)}(s_n)\geq 0$. Thus $X^{\sigma^i(t^-)}(s_n)\leq X^{\sigma^i(t)}(s_n)$ by definition of $\sigma^i$. This inequality propagates up to time $t$ since the equation is order preserving on $\mathcal R^0$:  $0\leq X^{\sigma^i(t^-)}(t)\leq X^{\sigma^i(t)}(t)$. But $X^{\sigma^i(t)}(t)\leq X^{\sigma^i(t^-)}(t)$ by the definition of $\sigma^i$. By \eqref{cond.initpropag} this implies that $\sigma^i(t^-)=\sigma^i(t)$.

 Let $(a,t]$ be the largest interval on which $\sigma^i(s)=\sigma^i(t)$ for  $s\in (a,t]$. Then $a<t$ because  $\sigma^i(t^-)=\sigma^i(t)$. As $\sigma^i(\cdot)$ is discontinuous only when $X^i<0$, either $a=0$ or $X^i(a)<0$. Let $(b,t]$ be the largest interval contained in $(a,t]$ such that $X^{\sigma^i(s)}(s)-X^i(s) < \rho_{\max}^{-1}$. Note that either $b=a$ or $X^{\sigma^i(b)}(b)-X^i(b) = \rho_{\max}^{-1}$. On the other hand, $\dot X^i(s) = V(X^{\sigma^i(s)}(t)-X^i(s))=0$  and 
$$
 X^{\sigma^i(b)}(b)-X^i(b) \leq X^{\sigma^i(t)}(t)-X^i(t)  <  \rho_{\max}^{-1}.
 $$
Hence $b=a=0$, which leads to a contradiction with \eqref{hyp.condinti}.  
 \end{proof}

As a consequence, we have the following bound on $\rho^\ep_X$: 

\begin{Proposition} \label{prop.x+1-xpetit} We have
 $$
 \|\rho^{\ep, 0}_X\|_\infty+ \sup_{k=1,2}\|\rho^{\ep, k}_X\|_{L^\infty(\R\backslash (-\ep\rho_{\max}^{-1}, \ep\rho_{\max}^{-1}))} \leq \rho_{\max}, 
 $$
 while 
 $$
 \sup_{k=1,2}\|\rho^{\ep, k}_X\|_{L^1((-\ep\rho_{\max}^{-1}, \ep\rho_{\max}^{-1}))} \leq 3\ep. 
$$
\end{Proposition}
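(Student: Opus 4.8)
The plan is to read both estimates directly off the definition \eqref{def.rho}. Fix a time $t$ and write $s=\ep^{-1}t$ for the corresponding unscaled time. Then $\rho^{\ep,k}_X(t,\cdot)$ is piecewise constant, equal to $y^{\ep,i}(t)=(X^{\sigma^i(s)}(s)-X^i(s))^{-1}$ on the scaled interval $I_i:=[\ep X^i(s),\ep X^{\sigma^i(s)}(s))$ for each vehicle with $r^i(s)=k$. The crucial remark is that $y^{\ep,i}(t)>\rho_{\max}$ holds exactly when the gap $X^{\sigma^i(s)}(s)-X^i(s)$ is below $\rho_{\max}^{-1}$, and Lemma \ref{lem.x+1-xpetit} describes precisely when this happens. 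Throughout I write $I_\ep:=(-\ep\rho_{\max}^{-1},\ep\rho_{\max}^{-1})$ for the small symmetric interval around the junction.

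For the $L^\infty$ bound I would argue branch by branch. On the outgoing road every vehicle with $r^i(s)=0$ has $X^i(s)\ge 0$, so Lemma \ref{lem.x+1-xpetit} forbids a gap below $\rho_{\max}^{-1}$; hence $y^{\ep,i}(t)\le\rho_{\max}$ and $\|\rho^{\ep,0}_X\|_\infty\le\rho_{\max}$. On an incoming road $k\in\{1,2\}$, whenever a gap drops below $\rho_{\max}^{-1}$ the lemma forces $X^i(s)<0\le X^{\sigma^i(s)}(s)$; as their difference is below $\rho_{\max}^{-1}$ this gives $X^i(s)\in(-\rho_{\max}^{-1},0)$ and $X^{\sigma^i(s)}(s)\in[0,\rho_{\max}^{-1})$, so after scaling the whole interval $I_i$ sits inside $I_\ep$. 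Thus $\rho^{\ep,k}_X$ can exceed $\rho_{\max}$ only on intervals contained in $I_\ep$, which yields $\|\rho^{\ep,k}_X\|_{L^\infty(\R\setminus I_\ep)}\le\rho_{\max}$. Since $\rho^{\ep,0}_X$ and $\rho^{\ep,k}_X$ have disjoint supports (the remark $\rho^{\ep,k}_X\rho^{\ep,0}_X=0$), these pointwise bounds combine into the first inequality.

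For the $L^1$ bound on $I_\ep$ I would split $\rho^{\ep,k}_X$ at the level $\rho_{\max}$. On the set where $\rho^{\ep,k}_X\le\rho_{\max}$ the contribution to $\int_{I_\ep}\rho^{\ep,k}_X$ is at most $\rho_{\max}\,|I_\ep|=\rho_{\max}\cdot 2\ep\rho_{\max}^{-1}=2\ep$. On the set where $\rho^{\ep,k}_X>\rho_{\max}$, the previous paragraph together with Lemma \ref{lem.x+1-xpetit} identifies this set with the interval of the (at most one) front-most vehicle of road $k$ whose leader has already crossed the junction, an interval contained in $I_\ep$; since each vehicle carries mass exactly $\int_\R y^{\ep,i}(t)\1_{I_i}=\ep$, this part contributes exactly $\ep$ (or $0$ if there is no such vehicle). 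Summing the two pieces gives $\|\rho^{\ep,k}_X\|_{L^1(I_\ep)}\le 2\ep+\ep=3\ep$.

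The one point that needs care is the bookkeeping behind the second and third paragraphs: I must check that on each incoming road there is \emph{at most one} over-dense vehicle---the front-most one, whose leader lies on the outgoing road---and that its scaled interval is contained in $I_\ep$. This follows from Lemma \ref{lem.x+1-xpetit} (a non-front vehicle has its leader on the same incoming road, hence $X^{\sigma^i(s)}(s)<0$ and it cannot straddle the junction) together with the ordering of the vehicles: consecutive non-straddling gaps are $\ge\rho_{\max}^{-1}$, which pushes every interval but the front one to the left of $-\ep\rho_{\max}^{-1}$. Once this is settled, both bounds reduce to elementary length-and-mass counting, the $2\ep$ being $\rho_{\max}$ times the length of $I_\ep$ and the extra $\ep$ the mass of the single straddling vehicle.
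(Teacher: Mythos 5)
Your proof is correct and follows essentially the same route as the paper: both use Lemma \ref{lem.x+1-xpetit} to show that over-density forces a vehicle to straddle the junction---hence it never occurs on road $0$, and for $k=1,2$ it is confined to $(-\ep\rho_{\max}^{-1},\ep\rho_{\max}^{-1})$ after scaling---and both obtain the $L^1$ bound as $2\ep+\ep$, the second term being the mass of the single straddling vehicle (a uniqueness point you make explicit, whereas the paper leaves it implicit in ``using the notation above''). One harmless slip: your parenthetical claim that every non-front interval is pushed entirely to the left of $-\ep\rho_{\max}^{-1}$ is false in general (such an interval may well intersect the junction neighbourhood), but it is also unnecessary, since those vehicles carry density at most $\rho_{\max}$ and are already absorbed in the $2\ep$ term of your split by density level.
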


\begin{proof} 
Assume that $\rho_X^{\ep, k}(t,x)>\rho_{\max}$ for some $k\in \{0,1,2\}$ and $x\in \R$. Then,  by the definition of $\rho^{\ep,k}_X$,  there exists $i\in \{1, \dots, N-1\}$ such that $x\in [x^{\ep,i}(t), x^{\ep, \sigma^i(t)}(t))$, $X^{\sigma^i(\ep^{-1}t)}(\ep^{-1}t)-X^i(\ep^{-1}t)<\rho_{\max}^{-1}$. Moreover either $k=0$ and $X^i(\ep^{-1}t)\geq 0$, or $X^i(\ep^{-1}t)< 0$ and $r^i(\ep^{-1}t)=k\in \{1,2\}$.  Lemma \ref{lem.x+1-xpetit} then states that $X^i(\ep^{-1}t)<0$ and  $X^{\sigma^i(\ep^{-1}t)}(\ep^{-1}t)\geq 0$. Thus $k\in \{1,2\}$, $x^{\ep,i}(t)\geq -\ep \rho_{\max}^{-1}$ while $x^{\ep, \sigma^i(t)}(t)\leq \ep \rho_{\max}^{-1}$. This shows the first claim. 

For the second one, if $\rho^{\ep,k}_X\leq \rho_{\max}$ in $(-\ep\rho_{\max}^{-1}, \ep\rho_{\max}^{-1})$ the claim is obvious. Otherwise, using the notation above, 
\begin{align*}
\int_{(-\ep\rho_{\max}^{-1},\ep\rho_{\max}^{-1})}\rho^{\ep, k}_X & \leq 
\int_{(-\ep\rho_{\max}^{-1},\ep\rho_{\max}^{-1})\backslash [x^{\ep,i}(t), x^{\ep, \sigma^i(t)}(t))}\rho^{\ep, k}_X + 
\int_{[x^{\ep,i}(t), x^{\ep, \sigma^i(t)}(t))} y^{\ep, i}(t) \\
&\leq  2\ep + y^{\ep, i}(t)(x^{\ep, \sigma^i(t)}(t))-x^{\ep,i}(t))  \; =\;  3 \ep. 
\end{align*}
\end{proof}

\section{$L^1-$type estimates on a branch}\label{sec.Esti1branch}

We discuss here the classical relationship between a follow-the-leader model of the form 
$$
\dot X^i(t)= V(X^{i+1}(t)-X^i(t))
$$
and the conservation law 
\be\label{eq.CL}
\rho_t + (f(\rho))_x=0
\ee
on a single line or half-line. Throughout this section, we assume for simplicity that $X^{i+1}\geq X^i$, which is possible since the equation on a line or a half line preserves the order. Let us recall that $f$ and $V$ are linked by the relation $f(r)= rV(1/r)$ and satisfy Assumption {\bf (H)} of Subsection \ref{subsec.hyp}. We consider two cases: either the flow of the $(X^i)$ is unconstrained and $\dot X^N(t)=V_{\max}$. Or the flow is constrained to stay in the half-line $\R_-$ by a stop at $x=0$. In this case $\dot X^N(t)= V(-X^N(t))$. The first case is classical and has been studied thoroughly: we recall below the main results of \cite{rigorousLWR}.  The stopped case is handled by  reducing the problem to the previous one. 

\subsection{BV estimate and Kato's inequality  on a line} 

Let $(X^i)_{i=1, \dots, N}$ be a solution to the equation 
$$
\dot X^i(t)= V(X^{i+1}(t)-X^i(t)), \qquad t\geq 0, \; i\in \{1, \dots, N\}.
$$
By convention, $X^{N+1}\equiv \infty$ and thus $\dot X^N(t)=V_{\max}$. We assume that $X^i(0)< X^{i+1}(0)$ for any $i$. This order is preserved along the flow. We fix $\ep>0$ and set  
\be\label{def.rhoXDiFR}
\rho_X(t,x) = \sum_{i=1}^{N-1} \frac{1}{X^{i+1}(\ep^{-1}t)-X^i(\ep^{-1} t)} {\bf 1}_{[\ep X^i(\ep^{-1}t), \ep X^{i+1}(\ep^{-1} t))}.
\ee
The central estimate used in this paper comes from Di~Francesco-Rosini  \cite{rigorousLWR}:   

\begin{Proposition}[\cite{rigorousLWR}]\label{prop.DiFR} Under our standing assumptions, we have: 
\begin{itemize}
\item  (BV estimate) There is a constant $C$, depending on the diameter of the support of $\rho_X(0, \cdot)$, such that, for any $t\geq0$, 
$$
TV(v(\rho_X(t,\cdot))) \leq \min\{TV(v(\rho_X(0,\cdot))), C(1+t^{-1})\}. 
$$

\item (Approximated entropy solution) For any $k\geq 0$ and any $\phi\in C^1_c((0,\infty)\times \R)$ with $\phi\geq 0$, 
\be\label{ineq.K}
\int_0^\infty \int_\R |\rho_X-k| \phi_t + {\rm sgn}(\rho_X-k) (f(\rho_X)-f(k)) \phi_x   \geq - \ep \sup_t TV(v(\rho_X(t))\int_0^\infty \|\phi_x(t, \cdot)\|_\infty dt.
\ee
Here $\sup_t TV(v(\rho_X(t))$ denotes the total variation of $v(\rho(t,\cdot))$ in the support of the test function $\phi$. 
\end{itemize}
\end{Proposition}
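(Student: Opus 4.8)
The plan is to translate the statement into the language of the vehicle ODEs and to exploit that, on each cell $[\ep X^i, \ep X^{i+1})$, the value of $v(\rho_X)$ equals $V(X^{i+1}-X^i)=\dot X^i$, i.e.\ the velocity of vehicle $i$. Writing $d^i:=X^{i+1}-X^i$, $a^i:=\dot X^i = V(d^i)=v(1/d^i)$ and $b^i:=a^{i+1}-a^i$, differentiation of the follow-the-leader equation gives the second-order system
\[
\ddot X^i = V'(d^i)\,(a^{i+1}-a^i)=\mu^i b^i,\qquad \mu^i:=V'(d^i)\ge 0 .
\]
Since $v(\rho_X(t,\cdot))$ is piecewise constant with value $a^i$ on the $i$-th cell, its total variation equals $\sum_i|a^{i+1}-a^i|=\sum_i|b^i|$, up to the two jumps at the edges of the support, which are harmless. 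Thus both assertions reduce to estimates on the scalars $b^i$ governed by the system above, with $\dot b^i=\mu^{i+1}b^{i+1}-\mu^i b^i$.

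First I would prove the TVD bound $TV(v(\rho_X(t)))\le TV(v(\rho_X(0)))$. Differentiating $\sum_i|b^i|$ and reindexing one of the resulting sums gives
\[
\frac{d}{dt}\sum_i|b^i|=\sum_i \mu^i b^i\big(\mathrm{sgn}(b^{i-1})-\mathrm{sgn}(b^i)\big)\le 0,
\]
because $\mu^i\ge 0$ and $b^i\,\mathrm{sgn}(b^{i-1})\le|b^i|$ for every $i$. This yields the first term in the minimum.

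The regularizing bound $C(1+t^{-1})$ is the main obstacle; it is the discrete counterpart of Oleinik's one-sided estimate for concave fluxes, and it is precisely where the structural hypothesis that $r\mapsto rv'(r)$ be nonincreasing (equivalently the uniform concavity of $f$) enters. I would track the positive part of the discrete gradient of the characteristic speed $f'(\rho_X)$, i.e.\ a suitable monotone function of the $b^i$, and derive for it a Riccati-type differential inequality of the form $\dot s\le -c\,s^2$, whose solutions obey $s(t)\le 1/(ct)$ irrespective of the initial datum. Since $a^i\in[0,V_{\max}]$ and the support of $\rho_X(t,\cdot)$ stays in a fixed bounded set, the total increase of $v(\rho_X)$ across the cells is $O(1)$; combined with the one-sided $1/t$ decay this controls both the positive and negative variation, giving $\sum_i|b^i|\le C(1+t^{-1})$. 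The delicate point is to make the comparison argument rigorous at the discrete level: choosing the correct monotone quantity, and handling the rightmost vehicle (where $\dot X^N=V_{\max}$) and the cells adjacent to the support boundary.

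Finally, for the approximate entropy inequality I would use the Lagrangian structure: $\rho_X$ is piecewise constant and the cell interface $\xi^i(t)=\ep X^i(\ep^{-1}t)$ moves with speed $\dot\xi^i=v(1/d^i)$, separating the density $1/d^{\,i-1}$ on its left from $1/d^{\,i}$ on its right. Fixing $k$ and $\phi\ge 0$ and summing the exact cell-by-cell balances of $\eta(\rho_X)=|\rho_X-k|$ and $q(\rho_X)=\mathrm{sgn}(\rho_X-k)(f(\rho_X)-f(k))$, the left-hand side of \eqref{ineq.K} collects, at each interface, the entropy dissipation of the shock joining the two adjacent densities and travelling at speed $v(1/d^i)=f(1/d^i)\,d^i$. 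Comparing this speed with the Rankine--Hugoniot speed of the two states, concavity of $f$ (Oleinik's condition) makes the admissible part of each contribution nonnegative, while the speed mismatch is a consistency error. Expanding $\phi$ across each $O(\ep)$-wide cell, this error is bounded by $\ep\,\|\phi_x\|_\infty$ times the local jump of $v(\rho_X)$; summing over interfaces and integrating in time produces exactly the right-hand side $-\ep\sup_t TV(v(\rho_X))\int_0^\infty\|\phi_x(t,\cdot)\|_\infty\,dt$, which is the asserted approximate Kato inequality.
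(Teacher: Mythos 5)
Your first bullet is proved correctly, and by essentially the same computation as in the paper: the paper works with $y^i=1/d^i$ and differentiates $\sum_i |v(y^{i+1})-v(y^i)|$, which is exactly your $\sum_i|b^i|$ since $v(y^i)=V(d^i)=a^i$; your coefficient $\mu^i=V'(d^i)$ is the paper's $-v'(y^i)(y^i)^2$ (times $\ep^{-1}$ from the time scaling), and the reindexing-plus-sign argument and the treatment of the edge terms coincide. For the two remaining bullets, however, the paper gives no proof at all: it cites Proposition 3.6 of \cite{rigorousLWR} for the bound $C(1+t^{-1})$ and the proof of Theorem 2.3 of \cite{rigorousLWR} for \eqref{ineq.K}. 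Your proposal attempts to prove both, and both attempts have genuine gaps.

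For the bound $C(1+t^{-1})$, the discrete Oleinik/Riccati strategy is indeed the right one (it is how \cite{rigorousLWR} and the companion paper on the one-sided Lipschitz condition proceed, and it is where the hypothesis that $r\mapsto rv'(r)$ is nonincreasing enters), but you explicitly leave the core comparison argument --- ``the delicate point'' --- unproven, so this part is a plan rather than a proof. A smaller inaccuracy: the support of $\rho_X(t,\cdot)$ does not stay in a fixed bounded set (it translates and spreads at speed up to $V_{\max}$); only its diameter grows at most linearly, which is still compatible with the claimed bound.

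The serious gap is in the entropy inequality. You attribute the good sign to Oleinik admissibility of the interface jumps and treat the mismatch between the interface speed $v(y^i)$ and the Rankine--Hugoniot speed as an $O(\ep)$ consistency error. Both points fail. The jumps of $\rho_X$ are in general \emph{not} admissible (rarefaction-type jumps, e.g.\ at the head of a platoon where the density drops to $0$, are ubiquitous and persistent), and the per-interface balance $-v(y^i)[\eta]+[q]$ has no sign; moreover the speed mismatch contributes, per interface and per unit time, a term proportional to the local jump of $v$ with \emph{no} factor $\ep$ --- interfaces are curves in space-time, not sets of measure $O(\ep)$. What actually saves the inequality is precisely the term your sign analysis omits: the states evolve inside each Lagrangian cell, $\dot y^i=-\ep^{-1}(y^i)^2(v(y^{i+1})-v(y^i))$, producing a reaction term $-{\rm sgn}(y^i-k)\,y^i\,(v(y^{i+1})-v(y^i))\,\phi$ of the same order as the interface term. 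When interface and cell terms are evaluated at the same point of $\phi$, their sum is identically $0$ whenever $y^i$ and $y^{i+1}$ lie on the same side of $k$ (admissible or not), and equals $2k(v(y^{i+1})-v(k))\le 0$, resp.\ $2k(v(k)-v(y^{i+1}))\le 0$, when $y^i<k<y^{i+1}$, resp.\ $y^{i+1}<k<y^i$ --- a consequence of the monotonicity of $v$, not of shock admissibility. The only residual is the error from evaluating $\phi$ at the interface instead of averaging over a cell of width $\ep/y^i$, and it is this error, of size $\ep\|\phi_x\|_\infty|v(y^{i+1})-v(y^i)|$ per cell, that sums to the right-hand side of \eqref{ineq.K}. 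Without the cell terms, your decomposition leaves uncontrolled positive contributions of order $TV(v(\rho_X))$ per unit time.
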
 

Note that \eqref{ineq.K} means that $\rho_X$ is an approximate entropy solution of the conservation law \eqref{eq.CL}. 

\begin{proof} We  check below the proof of the inequality
\be\label{ljksdfjn,qsdlkf}
TV(v(\rho_X(t,\cdot))) \leq TV(v(\rho_X(0,\cdot))), 
\ee
as it is not given explicitly in \cite{rigorousLWR} (however the argument is exactly that of \cite[Proposition 3.5]{rigorousLWR}). The inequality  
$$
TV(v(\rho_X(t,\cdot)))\leq C(1+t^{-1}) 
$$
is given in \cite[Proposition 3.6]{rigorousLWR}, while the fact that $\rho_X$ is an approximated entropy solution is the key argument of the proof of \cite[Theorem 2.3]{rigorousLWR}. 

To prove \eqref{ljksdfjn,qsdlkf} we first note that 
$$
TV(v(\rho_X(t,\cdot)))=  \sum_{i=0}^{N-1} \left|v(y^{i+1}(t))-v(y^i(t))\right|, 
$$
where, for $i=1, \dots, N-1$, 
 $$
 y^i(t):= \frac{1}{X^{i+1}(\ep^{-1}t)-X^i(\ep^{-1}t)}
 $$
 and where, by convention, $X^0(t)=-\infty$, $X^{N+1}(t)=\infty$ and thus $y^0(t)=y^{N}(t)=0$. 
 
We note that, for $i=1, \dots, N-1$, 
 $$
 \dot y^i(t)= - (y^i(t))^2 \ep^{-1} \left( v(y^{i+1}(t))-v(y^{i}(t))\right). 
 $$
 Thus 
 \begin{align*} 
\frac{d}{dt} TV(v(\rho_X(t,\cdot))) &=   \sum_{i=0}^{N-1}  {\rm sgn} \left( v( y^{i+1}(t))-v(y^i(t))\right) 
\left(v'(y^{i+1}(t))\dot y^{i+1}(t)-v'(y^i(t))\dot y^{i}(t)\right) \\ 
&= - {\rm sgn} \left( v(y^{1}(t))-v(y^0(t))\right) v'(y^0(t)) \dot y^{0}(t)+  {\rm sgn} \left( v(y^{N}(t))-v(y^{N-1}(t))\right) v'(y^N(t)) \dot y^{N}(t) \\
& \qquad + \sum_{i=1}^{N-1}v'(y^i(t)) \dot y^{i}(t) \left( {\rm sgn} \left( v(y^{i}(t))-v(y^{i-1}(t))\right) - {\rm sgn} \left(  v(y^{i+1}(t))- v(y^{i}(t))\right)\right).
\end{align*}
The first two terms vanish. 
As, for $i=1, \dots, N-1$,  
\begin{align*}
&v'(y^i(t))\dot y^{i}(t) \left( {\rm sgn} \left( y^{i}(t)-y^{i-1}(t)\right) - {\rm sgn} \left(  y^{i+1}(t)- y^{i}(t)\right)\right) \\ 
& = -  v'(y^i(t))(y^i(t))^2 \ep^{-1}  \left( v(y^{i+1}(t))-v(y^{i}(t))\right) \left( {\rm sgn} \left( v(y^{i}(t))-v(y^{i-1}(t))\right) - {\rm sgn} \left( v(y^{i+1}(t))- v(y^{i}(t))\right)\right) \leq0, 
\end{align*}
since $z\to v(z)$ is non increasing, we infer that 
$$
\frac{d}{dt} TV(\rho_X(t,\cdot))) \leq 0.
$$
\end{proof} 

We now consider $\rho_Y$ such that either $\rho_Y$ is a bounded entropy solution of \eqref{eq.CL}, or 
\be\label{def.rhoYYY}
\rho_Y(t,x) = \sum_{i=1}^{M-1} \frac{1}{Y^{i+1}(\ep^{-1}t)-Y^i(\ep^{-1}t)} {\bf 1}_{[\ep Y^i(\ep^{-1}t), \ep Y^{i+1}(\ep^{-1}t))}, 
\ee
where  $(Y^j)_{i=1, \dots, M}$ is another solution to the follow-the-leader model:
$$
\dot Y^i(t)= V(Y^{i+1}(t)-Y^i(t)), \qquad t\geq 0, \; i\in \{1, \dots, M\},
$$
with again the convention that $Y^{M+1}(t)=\infty$. 

From now on we fix $\delta:\R\to \R_+$ a smooth function, with support in $[-1,1]$, and integral $1$: $\int_\R \delta =1$. We set $\delta_h(x)= h\delta (hx)$ for $h>0$ large. 

\begin{Corollary}[Kato's inequality]\label{cor.estiL1} For any $0\leq t_1\leq t_2\leq 1$, any $\psi\in C_c^\infty([0,\infty)\times \R, \R_+)$ and $h\geq 1$, 
\begin{align*}
&\iint \Bigl\{  |\rho_X(t_2,x)-\rho_Y(t_2,y)|  \psi(t_2, \frac{x+y}{2}) \delta_h(\frac{y-x}{2}) \Bigr\}\; dxdy\\ 
&+ \iint\int_{t_1}^{t_2} \Bigl\{  |\rho_X(t,x)-\rho_Y(t,y)|  \psi_t(t, \frac{x+y}{2}) \delta_h(\frac{y-x}{2}) \\ 
& \qquad + {\rm sgn}(\rho_X(t,x)-\rho_Y(t,y)) (f(\rho_X(t,x))-f(\rho_Y(t,y)))\psi_x(t, \frac{x+y}{2}) \delta_h(\frac{y-x}{2})
\Bigr\}\; dtdxdy \\ 
& \geq \iint \Bigl\{  |\rho_X(t_1,x)-\rho_Y(t_1,y)|  \psi(t_1, \frac{x+y}{2}) \delta_h(\frac{y-x}{2}) \Bigr\}\; dxdy
- C\ep K  \int_{t_1}^{t_2} (h^2 \|\psi(t, \cdot)\|_\infty+ h\|\psi_x(t, \cdot)\|_\infty)dt.
\end{align*}
where $C$ depends on the support of $\psi$ (but not on $\ep$ and $h$) and
\be\label{defKKK}
K=\left\{\begin{array}{ll} \sup_t TV(v(\rho_X(t))) +\sup_t TV(v(\rho_Y(t))) & \text{ if $\rho_Y$ is given by \eqref{def.rhoYYY}} ,\\
\sup_t TV(v(\rho_X(t))) & \text{if $\rho_Y$ is a solution of \eqref{eq.CL},}
\end{array}\right. 
\ee
(the total variation being computed on the support of $\psi$). 
\end{Corollary}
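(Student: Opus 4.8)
The plan is to prove the doubling-of-variables Kato inequality by combining the approximate entropy inequality \eqref{ineq.K} for $\rho_X$ with the corresponding inequality for $\rho_Y$, using the standard Kru\v{z}kov test function that depends on both $(t,x)$ and $(t,y)$. The key device is to fix one variable and treat the other's value as the Kru\v{z}kov constant $k$. Concretely, for fixed $(t,y)$ I would apply \eqref{ineq.K} to $\rho_X(\cdot,\cdot)$ with the constant $k=\rho_Y(t,y)$ and the test function $(t,x)\mapsto \psi(t,\frac{x+y}{2})\delta_h(\frac{y-x}{2})$, and symmetrically for $\rho_Y$ (or, in the case $\rho_Y$ is an exact entropy solution of \eqref{eq.CL}, I would use the genuine Kru\v{z}kov entropy inequality, which carries no error term, explaining why $K$ drops the $\rho_Y$-contribution in \eqref{defKKK}). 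Adding the two inequalities and integrating in the remaining variable produces the left-hand side of the claimed estimate, up to the time-derivative and boundary-in-time terms that I must next assemble.

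\textbf{Handling the time integration.} The inequality \eqref{ineq.K} is stated over $(0,\infty)$, so to obtain the version between two times $t_1$ and $t_2$ I would localize in time: replace $\psi$ by $\psi(t,\cdot)\chi(t)$ where $\chi$ is a smooth approximation of ${\bf 1}_{[t_1,t_2]}$, then let $\chi\to {\bf 1}_{[t_1,t_2]}$. Since $\rho_X$ and $\rho_Y$ are, for fixed $t$, piecewise constant in space with values controlled by Proposition~\ref{prop.x+1-xpetit}, the maps $t\mapsto \iint |\rho_X-\rho_Y|\psi\,\delta_h$ are of bounded variation (indeed the empirical densities are Lipschitz in time in the $L^1$ sense away from the discontinuity set), so the limit yields exactly the two boundary terms at $t_2$ and $t_1$ that appear in the statement, with the correct signs. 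The $\psi_t$ term arises because the time-derivative falls on $\psi$ rather than on $\chi$.

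\textbf{Bounding the error term.} The quantitative error on the right-hand side is where the $h$-dependence enters. Each application of \eqref{ineq.K} contributes a term of the form $-\ep\,\sup_t TV(v(\rho_X(t)))\int \|\partial_x[\psi(t,\tfrac{x+y}{2})\delta_h(\tfrac{y-x}{2})]\|_\infty\,dt$. Differentiating the product in $x$ produces two pieces: one in which the derivative hits $\psi$, giving a factor $\tfrac12\psi_x$ times $\delta_h$ and thus a contribution of order $h\|\psi_x\|_\infty$ after integrating $\delta_h$; and one in which the derivative hits $\delta_h$, giving $\psi\cdot(-\tfrac12)\delta_h'(\tfrac{y-x}{2})$, whose integral against the other (spatial) variable is of order $h^2\|\psi\|_\infty$ because $\delta_h'$ scales like $h^2$ and integrates to a constant after the change of variables. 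Collecting these yields precisely the bound $C\ep K\int_{t_1}^{t_2}(h^2\|\psi(t,\cdot)\|_\infty+h\|\psi_x(t,\cdot)\|_\infty)\,dt$, with $K$ as in \eqref{defKKK}; the constant $C$ depends only on the (compact) support of $\psi$ through the range of integration in the frozen variable.

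\textbf{Main obstacle.} I expect the principal technical difficulty to lie not in the formal algebra but in justifying the freezing-and-integrating step rigorously: \eqref{ineq.K} is an integral inequality for a single fixed constant $k$, and I must integrate it against $dy$ (respectively $dx$) where $k=\rho_Y(t,y)$ itself depends on the integration variable. Making this legitimate requires Fubini together with care that the approximate-entropy defect bound in \eqref{ineq.K} is uniform in $k$ (which it is, since the right-hand side of \eqref{ineq.K} does not depend on $k$), and that the symmetric roles of $x$ and $y$ in the combined test function do not introduce spurious cross terms when the sign function ${\rm sgn}(\rho_X-\rho_Y)$ is used on both sides. The concavity of $f$ and the monotonicity of $v$ guaranteed by Assumption~\textbf{(H)} ensure the entropy-flux pairing $q$ is handled consistently between the two inequalities, so that the flux terms combine into the single ${\rm sgn}(\rho_X-\rho_Y)(f(\rho_X)-f(\rho_Y))\psi_x\,\delta_h$ term displayed in the statement.
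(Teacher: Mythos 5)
Your overall strategy --- combining the approximate entropy inequality \eqref{ineq.K} for $\rho_X$ with the exact (resp.\ approximate) entropy inequality for $\rho_Y$, splitting the defect according to whether the derivative hits $\psi$ or $\delta_h$, and finishing with a cut-off in time --- is the paper's strategy, and your bookkeeping of the error ($h^2\|\psi\|_\infty$ from $\delta_h'$, $h\|\psi_x\|_\infty$ from $\psi_x$, the case distinction in $K$) is correct. But there is a genuine gap at the very first step: you apply \eqref{ineq.K} ``for fixed $(t,y)$'' with the constant $k=\rho_Y(t,y)$ and the test function $(t,x)\mapsto\psi(t,\frac{x+y}{2})\delta_h(\frac{y-x}{2})$. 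Here $t$ is simultaneously frozen (inside $k$) and an integration variable of \eqref{ineq.K}; this is not a legitimate use of the inequality, which holds only for constants $k$ independent of the variables being integrated. Your ``main obstacle'' paragraph does sense a difficulty, but misdiagnoses it: Fubini and uniformity of the defect in $k$ only address the integration in the frozen \emph{space} variable $y$; they cannot legitimize a $k$ that varies with the \emph{time} variable over which \eqref{ineq.K} integrates.

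The missing device is the doubling of the time variable. The paper applies \eqref{ineq.K} with $k=\rho_Y(s,y)$ for frozen $(s,y)$ and the test function $\phi(t,x,s,y)=\delta_n(\frac{s-t}{2})\delta_h(\frac{y-x}{2})\psi(\frac{t+s}{2},\frac{x+y}{2})$, integrates in $(s,y)$, adds the symmetric inequality in which $\rho_X(t,x)$ is the frozen constant, and only then lets $n\to\infty$ to collapse $s$ onto $t$. Two points are essential here and absent from your write-up: (i) when the two inequalities are added, the antisymmetric $\delta_n'$ terms cancel, leaving exactly one $\psi_t$ term (each inequality contributes $\frac12\psi_t$ via the chain rule on $\psi(\frac{t+s}{2},\cdot)$); in your same-time setup each application would produce a full $\psi_t$, double-counting it, which is a symptom of the ill-posed starting point; (ii) the limit $n\to\infty$, like the final cut-off producing the boundary terms at $t_1$ and $t_2$, requires a time-continuity input for the discrete densities --- the paper invokes the weak continuity in time of \cite[Proposition 3.8]{rigorousLWRbis}, whereas your appeal to Proposition \ref{prop.x+1-xpetit} (a statement about the junction model, giving only $L^\infty$ bounds) does not supply this. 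Once the time variable is doubled and this continuity is invoked, the rest of your argument (symmetric treatment of $\rho_Y$, genuine Kruzhkov inequality when $\rho_Y$ solves \eqref{eq.CL} so that $TV(v(\rho_Y))$ drops out of $K$, and the error estimates) goes through as you describe.
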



\begin{proof} Let $\psi\in C_c^\infty((0,\infty)\times \R, \R_+)$. 
Following Kruzkhov \cite{K70} we apply inequality \eqref{ineq.K} with $k=\rho_Y(s,y)$ and  
$$
\phi(t,x,s,y)= \delta_n(\frac{s-t}{2})\delta_h(\frac{y-x}{2}) \psi(\frac{t+s}{2}, \frac{x+y}{2}) ,
$$
where $n,h\geq 1$.  We obtain 
\begin{align*}
&\int_0^\infty \int_\R\Bigl\{  |\rho_X(t,x)-\rho_Y(s,y)| \left( -\delta'_n(\frac{s-t}{2}) \psi(\frac{t+s}{2}, \frac{x+y}{2}) + \delta_n(\frac{s-t}{2})\psi_t(\frac{t+s}{2}, \frac{x+y}{2})\right) \delta_h(\frac{y-x}{2}) \\ 
& + {\rm sgn}(\rho_X(t,x)-\rho_Y(s,y)) (f(\rho_X(t,x))-f(\rho_Y(s,y)))\times \\
& \qquad \times  \left( - \delta_h'(\frac{y-x}{2}) \psi(\frac{t+s}{2}, \frac{x+y}{2})  + \delta_h(\frac{y-x}{2})\psi_x(\frac{t+s}{2}, \frac{x+y}{2})\right)\delta_n(\frac{s-t}{2})
\Bigr\}\; dxdt \\ 
& \geq - \ep \sup_s TV(v(\rho_X(s))) \int_0^\infty (\|\delta_h'\|_\infty\|\psi(\frac{t+s}{2}, \cdot)\|_\infty+ \|\delta_h\|_\infty\|\psi_x(\frac{t+s}{2}, \cdot)\|_\infty)\delta_n(\frac{s-t}{2})dt. 
\end{align*}
We integrate in $(s,y)$ (over the support of $\psi$) and add the symmetric inequality for $\rho_Y$ to get 
\begin{align*}
&\iiiint \Bigl\{  |\rho_X(t,x)-\rho_Y(s,y)|  \delta_n(\frac{s-t}{2})\psi_t(\frac{t+s}{2}, \frac{x+y}{2}) \delta_h(\frac{y-x}{2}) \\ 
& + {\rm sgn}(\rho_X(t,x)-\rho_Y(s,y)) (f(\rho_X(t,x))-f(\rho_Y(s,y))) \delta_h(\frac{y-x}{2})\psi_x(\frac{t+s}{2}, \frac{x+y}{2})\delta_n(\frac{s-t}{2})
\Bigr\}\; dtdxdsdy \\ 
& \geq - C\ep K \iint (\|\delta_h'\|_\infty\|\psi(\frac{t+s}{2}, \cdot)\|_\infty+ \|\delta_h\|_\infty\|\psi_x(\frac{t+s}{2}, \cdot)\|_\infty)\delta_n(\frac{s-t}{2})dsdt ,
\end{align*}
where $K$ is defined by \eqref{defKKK} and $C$ depends on the support of $\psi$. 

We let $n\to\infty$ to obtain:
\begin{align*}
&\iiint \Bigl\{  |\rho_X(t,x)-\rho_Y(t,y)|  \psi_t(t, \frac{x+y}{2}) \delta_h(\frac{y-x}{2}) \\ 
& + {\rm sgn}(\rho_X(t,x)-\rho_Y(t,y)) (f(\rho_X(t,x))-f(\rho_Y(t,y))) \delta_h(\frac{y-x}{2})\psi_x(t, \frac{x+y}{2})
\Bigr\}\; dtdxdy \\ 
& \geq - C\ep K  \int_0^\infty (h^2 \|\psi(t, \cdot)\|_\infty+ h\|\psi_x(t, \cdot)\|_\infty)dt.
\end{align*}
We complete the proof by a standard cut-off in time argument (we use here the weak continuity in time of the solution, which holds thanks to \cite[Proposition 3.8]{rigorousLWRbis}). 
\end{proof}

\subsection{BV  estimates and Kato's inequality on a road with a red light} \label{subsec.stoppedtraf}

We  now consider a traffic flow on a road $(-\infty, 0)$ stopped  by a red light at  $x=0$. More precisely, we assume that the $(X^i)_{i=1, \dots, N}$ are strictly ordered at initial time: $X^i(0)<X^{i+1}(0)<0$ for any $i=1, \dots, N-1$,  and solve the equation
$$
\dot X^i(t)= \left\{ \begin{array}{ll}
V( X^{i+1}(t)-X^i(t)) & \text{for}\; i\in \{1, \dots, N-1\},\\ 
V(-X^{N}(t))  & \text{for}\; i=N.
\end{array}\right.
$$
It is thus natural to set $X^{N+1}(t)\equiv 0$ for any $t$, where $X^{N+1}$ represents a red light preventing the cars to leave the road $(-\infty, 0)$. We fix a scale $\ep>0$ and, as before, we set 
$$
x^i(t)= \ep X^i(\ep^{-1}t), \qquad y^i(t)= \frac{1}{X^{i+1}(\ep^{-1}t)-X^i(\ep^{-1}t)}, 
$$
\be\label{def.rhoXStopped}
\rho_X(t,x) = \sum_{i=1}^{N-1} y^i(t) {\bf 1}_{[x^i(t), x^{i+1}(t))}. 
\ee
Note that, by convention, $\rho_X$ vanishes on $(-\infty, x^1(t))$ and on $(x^{N}(t),\infty)$. Let us first start with a BV bound. 

\begin{Proposition}\label{prop.BoundBVRed} For any $t\geq 0$,  
$$
TV(v(\rho_X(t,\cdot))) \leq TV(v(\rho_X(0,\cdot))) +4V_{\max},
$$
where $V_{\max} =\|V\|_\infty$. 
\end{Proposition}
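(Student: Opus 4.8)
The plan is to reduce the constrained (red-light) problem to the unconstrained follow-the-leader model of the previous subsection, for which the sharp monotonicity \eqref{ljksdfjn,qsdlkf} of Proposition \ref{prop.DiFR} is available, and then to pay only a bounded additive price for the bookkeeping at the light. Exactly as in the construction used in the proof of Proposition \ref{prop.existidisc}, I would view the red light as a stationary jam of phantom vehicles sitting behind $x=0$: since $V(\rho_{\max}^{-1})=V(e_{\min})=0$, the positions $X^{N+1}\equiv 0$ and $X^{N+1+j}\equiv j\rho_{\max}^{-1}$ (for $j\ge 1$) form a steady configuration, and the last real vehicle $X^N$, whose law $\dot X^N=V(-X^N)=V(X^{N+1}-X^N)$ is precisely the follow-the-leader rule with leader $X^{N+1}$, is unaffected by the reduction. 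In this way the family $(X^i)_{i\ge 1}$ solves a genuine unconstrained model. I then introduce the associated extended density $\tilde\rho_X$, which differs from $\rho_X$ in that it also records the gap between $X^N$ and the light (density $z^N=1/(-X^N)$) and the jammed region beyond it (density $\rho_{\max}$, on which $v(\rho_{\max})=0$).

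Setting $z^i(t)=1/(X^{i+1}(\ep^{-1}t)-X^i(\ep^{-1}t))$ for $i=1,\dots,N$ together with the fixed boundary values $z^0\equiv 0$ and $z^{N+1}\equiv\rho_{\max}$, a direct computation identical to that behind \eqref{ljksdfjn,qsdlkf} gives $\dot z^i=-(z^i)^2\ep^{-1}(v(z^{i+1})-v(z^i))$ for every $i=1,\dots,N$; for $i=N$ this uses precisely $v(z^{N+1})=v(\rho_{\max})=0$, which is why the stopped rule for $X^N$ fits the uniform formula. The total variation $TV(v(\tilde\rho_X(t)))=\sum_{i=0}^{N}|v(z^{i+1})-v(z^i)|$ then obeys the same differential inequality as in the proof of \eqref{ljksdfjn,qsdlkf}: the two boundary terms (at $i=0$ and $i=N+1$) vanish because $z^0$ and $z^{N+1}$ are constant in time, while every interior term is $\le 0$ since $v$ is nonincreasing. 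No blow-up issue arises when $z^N$ is large, because $v'\equiv 0$ on $(\rho_{\max},\infty)$. Hence $TV(v(\tilde\rho_X(t)))\le TV(v(\tilde\rho_X(0)))$.

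It remains to compare $TV(v(\rho_X))$ with $TV(v(\tilde\rho_X))$. The two functions coincide on $(-\infty,x^N(t))$, so their total variations differ only through the jumps located at $x^N$ and at the light: writing out both sums, their difference equals $|V_{\max}-v(y^{N-1})|-|v(z^N)-v(y^{N-1})|-v(z^N)$, a quantity lying in $[-2V_{\max},V_{\max}]$ since $0\le v\le V_{\max}$ (recall $v(0)=V_{\max}$ and $v(\rho_{\max})=0$). Hence $|TV(v(\rho_X))-TV(v(\tilde\rho_X))|\le 2V_{\max}$ at every time, and combining this at $t$ and at $0$ with the monotonicity of $TV(v(\tilde\rho_X))$ yields $TV(v(\rho_X(t)))\le TV(v(\tilde\rho_X(t)))+2V_{\max}\le TV(v(\tilde\rho_X(0)))+2V_{\max}\le TV(v(\rho_X(0)))+4V_{\max}$, as claimed.

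The main obstacle is the rigorous treatment of the phantom jam at the light. The cleanest way to legitimize $z^{N+1}\equiv\rho_{\max}$ is to keep it a fixed boundary value in the direct ODE computation above, which is then self-contained; if instead one insists on realizing the extended system as an honest finite follow-the-leader model, one must guarantee that the jam behind $x=0$ does not begin to disperse on the time interval considered, which follows from the finite speed of backward propagation (adding sufficiently many stationary phantoms). The remaining care is purely in the boundary bookkeeping of the third paragraph, to ensure that the discrepancy between $\rho_X$—which, by \eqref{def.rhoXStopped}, deliberately ignores the last gap—and the extended density $\tilde\rho_X$ is absorbed into the additive constant $4V_{\max}$ rather than into a multiplicative or support-dependent term.
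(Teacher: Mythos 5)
Your proposal is correct and follows essentially the same route as the paper: extend the configuration by stationary phantom vehicles spaced $\rho_{\max}^{-1}$ apart ahead of the light (so the stopped system becomes an unconstrained follow-the-leader model), invoke the TV monotonicity $TV(v(\tilde\rho_X(t)))\leq TV(v(\tilde\rho_X(0)))$ for that extended system, and compare $TV(v(\rho_X))$ with $TV(v(\tilde\rho_X))$ at times $t$ and $0$, paying $2V_{\max}$ each time. The only differences are presentational: you re-derive the monotonicity with explicit boundary values $z^0\equiv 0$, $z^{N+1}\equiv\rho_{\max}$ (where the paper cites Proposition \ref{prop.DiFR} and its proof), and you bound the TV discrepancy by writing out the jump sums rather than via the paper's decomposition of $v(\tilde\rho)-v(\rho)$; both give the same $2V_{\max}$ bound.
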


\begin{proof}  We consider the solution $(\tilde X^i)_{i\in \N}$  to the equation 
$$
\dot{\tilde X}^j(t)= V(\tilde X^{j+1}(t)-\tilde X^j(t)), \qquad t\geq 0, \; i\in \N, 
$$
with 
$$
\tilde X^i(0)= X^i(0) \; \text{for} \; i\in \{1, \dots, N\}, 
$$
and 
$$
 \tilde X^i(0)= 
 (i-N-1) \rho_{\max}^{-1} \qquad  \text{for}\; i\geq N+1. 
$$
We note that 
$$
\tilde X^i(t)= X^i(t) \; \text{if} \; i\in \{1, \dots, N\}, \qquad \tilde X^i(t)= \tilde X^i(0)\; \text{if}\; i\geq N+1.
$$
We set 
$$
\tilde \rho_X(t,x) = \sum_{i\in \N} \tilde y^i(t){\bf 1}_{[\tilde x^i(t), \tilde x^{i+1}(t))},
$$
where 
$$
\tilde y^i(t)= \frac{1}{\tilde X^{i+1}(\ep^{-1}t)-\tilde X^i(\ep^{-1}t)}, \qquad \tilde x^i(t)=\ep \tilde X^i(\ep^{-1}t).
$$
We note that 
$$
\tilde \rho_X(t,x)= \rho_{\max} \qquad \text{for}\; x\geq 0, \qquad \tilde \rho_X(t,x)= \rho_X(t,x)\; \text{for}\; x\leq x^N(t). 
$$
By Proposition \ref{prop.DiFR} (see also its proof) 
$$
TV(v(\tilde \rho(t,\cdot))) \leq TV(v(\tilde \rho(0,\cdot)))  \qquad \forall t\geq 0.
$$
As 
$$
 \tilde \rho(t,x)= \rho(t,x) -\frac{\ep}{x^N(t)}{\bf 1}_{[x^N(t), 0)} + \rho_{\max}{\bf 1}_{[0,\infty)},
 $$
 we have (recalling that $v(0)= V_{\max}$ and $v(\rho_{\max})=0$)
$$
v( \tilde \rho(t,x))= v(\rho(t,x))+v( \frac{-\ep}{x^N(t)}){\bf 1}_{[x^N(t), 0)} - V_{\max}{\bf 1}_{[x^N(t), \infty)}. 
 $$
 Thus 
 $$
|TV(v(\tilde \rho(t,\cdot))-TV(v( \rho(t,\cdot))|\leq 2V_{\max},
$$
which proves the result. 
\end{proof}

We now turn to the (approximate) Kato's inequality. We consider two cases: either $Y=(Y^i)_{i=1, \dots, M}$ is  a solution to a follow-the-leader model  with a red light at $x=0$: 
\be\label{eq.Yifeu}
\dot Y^i(t)= \left\{ \begin{array}{ll}
V( Y^{i+1}(t)-Y^i(t)) & \text{for}\; i\in \{1, \dots, M-1\},\\ 
V(-Y^{N}(t))  & \text{for}\; i=M,
\end{array}\right.
\ee
and 
\be\label{def.rohYYYstopped}
\rho_Y(t,x) = \sum_{i=1}^{N-1} \frac{1}{Y^{i+1}(\ep^{-1}t)-Y^i(\ep^{-1}t)} {\bf 1}_{[\ep Y^i(\ep^{-1}t), \ep Y^{i+1}(\ep^{-1} t))}. 
\ee
Or $\rho_Y$ is an entropy solution to 
\be\label{eq.CLboundary}
\begin{array}{l}
\rho_Y(t,x) \in [0,\rho_{\max}] \qquad \text{a.e. in}\; (0,\infty)\times (-\infty,0)\\
(\rho_Y)_t+(f(\rho_Y))_x= 0 \qquad \text{in}\; (0,\infty)\times (-\infty,0)\\ 
f(\rho_Y(t,0^-))= 0 \qquad \text{in}\; (0,\infty).
\end{array} 
\ee
By an entropy solution, we mean that $\rho_Y$ is an entropy solution of the conservation law in $(0,\infty)\times (-\infty,0)$ and its trace at $x=0^-$ satisfies $f(\rho_Y(t,0^-))= 0$ for a.e. $t\in (0,\infty)$. Recall that this strong trace exists thanks to \cite{Ps07}. 

As before we fix $\delta:\R\to \R_+$ a smooth function, with support in $[-1,1]$, and integral $1$: $\int_\R \delta =1$. We set $\delta_h(x)= h\delta (hx)$ for $h>0$ large.

\begin{Proposition}\label{prop.rateFeu} For any $0\leq t_1\leq t_2$ and any  test function $\psi\in C^\infty_c([0,\infty)\times \R,\R_+)$ and $h\geq 1$, 
\begin{align*}
&\iint \Bigl\{  |\rho_X(t_2,x)-\rho_Y(t_2,y)|  \psi(t_2, \frac{x+y}{2}) \delta_h(\frac{y-x}{2})\Bigr\}dxdy \\ 
&+ \iint \int_{t_1}^{t_2}  \Bigl\{  |\rho_X(t,x)-\rho_Y(t,y)|  \psi_t(t, \frac{x+y}{2}) \delta_h(\frac{y-x}{2}) \\ 
& \qquad + {\rm sgn}(\rho_X(t,x)-\rho_Y(t,y)) (f(\rho_X(t,x))-f(\rho_Y(t,y))) \psi_x(t, \frac{x+y}{2}) \delta_h(\frac{y-x}{2})
\Bigr\}\; dtdxdy \\ 
& \geq \iint \Bigl\{  |\rho_X(t_1,x)-\rho_Y(t_1,y)|  \psi(t_1, \frac{x+y}{2}) \delta_h(\frac{y-x}{2})\Bigr\}dxdy \\ 
& \qquad - C \int_{t_1}^{t_2} (\ep K h^2 \|\psi(t, \cdot)\|_\infty+ (\ep Kh+h^{-1})\|\psi_x(t, \cdot)\|_\infty +
 (\ep +h^{-1})\|\psi_t(t, \cdot)\|_\infty  )dt,
\end{align*}
where $C$ depends on the support of $\psi$ (but not on $\ep$ and $h$) and 
$$
K= \left\{\begin{array}{ll}
\sup_t TV( v( \rho_X(t))) +\sup_t TV( v( \rho_Y(t))) +1 & \mbox{\rm if $\rho_Y$ is given by \eqref{def.rohYYYstopped},}\\
\sup_t TV( v( \rho_X(t)))  +1& \mbox{\rm if $\rho_Y$ solves \eqref{eq.CLboundary}.}
\end{array}\right.
$$
\end{Proposition}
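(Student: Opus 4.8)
The plan is to reduce the stopped problem to the unconstrained one treated in Corollary~\ref{cor.estiL1}, exactly in the spirit of the proof of Proposition~\ref{prop.BoundBVRed}. First I would extend $\rho_X$ to a density $\tilde\rho_X$ on the whole line by appending the stationary fictitious vehicles $\tilde X^i(0)=(i-N-1)\rho_{\max}^{-1}$ for $i\geq N+1$; since $V(\rho_{\max}^{-1})=0$ these stay frozen, so $(\tilde X^i)_{i\in\N}$ solves an unconstrained follow-the-leader model and $\tilde\rho_X$ is of the form \eqref{def.rhoXDiFR}, with $\tilde\rho_X=\rho_{\max}$ on $[0,\infty)$ and $\tilde\rho_X=\rho_X$ on $(-\infty,x^N(t))$. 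When $\rho_Y$ is given by \eqref{def.rohYYYstopped} I would extend it the same way to $\tilde\rho_Y$; when $\rho_Y$ solves \eqref{eq.CLboundary} I would instead set $\tilde\rho_Y=\rho_{\max}$ on $[0,\infty)$. In the latter case one checks that $\tilde\rho_Y$ is a genuine entropy solution of \eqref{eq.CL} on all of $\R$: the only new feature is the interface at $x=0$, where the boundary condition $f(\rho_Y(t,0^-))=0=f(\rho_{\max})$ forces Rankine--Hugoniot to hold and makes the stationary jump from $\rho_Y(t,0^-)\in\{0,\rho_{\max}\}$ to $\rho_{\max}$ a Lax-admissible shock (here I use that $f$ is concave with $f(0)=f(\rho_{\max})=0$, so $f'(0)>0>f'(\rho_{\max})$). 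Crucially, in every case $\tilde\rho_X=\tilde\rho_Y=\rho_{\max}$ on $[0,\infty)$.

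Second, I would apply Corollary~\ref{cor.estiL1} to the pair $(\tilde\rho_X,\tilde\rho_Y)$, which now lives on the whole line. By Proposition~\ref{prop.BoundBVRed} (and its analogue for $\tilde\rho_Y$) the total variations of $v(\tilde\rho_X)$ and $v(\tilde\rho_Y)$ differ from those of $v(\rho_X),v(\rho_Y)$ by at most an additive constant of order $V_{\max}$, which is what the $+1$ in the constant $K$ of the statement absorbs. This already yields the announced inequality, but with $\tilde\rho_X,\tilde\rho_Y$ in place of $\rho_X,\rho_Y$ throughout and with the smaller error $C\ep K\int(h^2\|\psi\|_\infty+h\|\psi_x\|_\infty)$.

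Third --- and this is the heart of the matter --- I would transfer the inequality from $(\tilde\rho_X,\tilde\rho_Y)$ back to $(\rho_X,\rho_Y)$ by running the doubling argument while tracking the discrepancy of the two Kato functionals. The integrands coincide away from the junction: where $x,y<\min(x^N(t),y^M(t))$ one has $\tilde\rho=\rho$, and where $x,y\geq 0$ all three terms (mass, time, flux) vanish for both functionals, since $\tilde\rho_X=\tilde\rho_Y=\rho_{\max}$ while $\rho_X=\rho_Y=0$. Hence the discrepancy concentrates near $x=0$ and splits into two contributions: the fictitious-vehicle layer $[x^N(t),0)$, on which $|\tilde\rho_X-\rho_X|=\ep/|x^N(t)|$ carries total mass exactly $\ep$ independently of the layer width, and the overlap region $\{x\geq 0>y\}$ which $\delta_h$ confines to width $O(h^{-1})$, on which the $O(1)$ density mismatch integrates to $O(h^{-1})$. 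Feeding these two contributions through the time-cutoff step of Corollary~\ref{cor.estiL1}, the localized mass-$\ep$ and width-$h^{-1}$ mismatches get weighted against the cutoff derivative and thus appear as the new $(\ep+h^{-1})\|\psi_t\|_\infty$ term, while the companion flux mismatch near the junction --- handled using $f(\rho_Y(t,0^-))=0$ and $f(\rho_{\max})=0$ --- produces the new $h^{-1}\|\psi_x\|_\infty$ term.

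The main obstacle is exactly this third step. The extension modifies $\rho$ by an $O(1)$ amount on the entire half-line $[0,\infty)$, so it is only the exact cancellation $\tilde\rho_X=\tilde\rho_Y=\rho_{\max}$ for $x\geq 0$, combined with the bounded-mass estimate on the fictitious layer and the $\delta_h$-localization, that keeps the transferred error small. The truly delicate point is the flux term in the boundary layer, where one must invoke the trace conditions $f(\cdot,0)=0$ to rule out any surviving $O(1)$ contribution; extracting the correct powers of $\ep$ and $h^{-1}$ there, rather than a single unweighted $O(\ep+h^{-1})$ loss, is the technical crux.
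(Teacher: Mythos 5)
Your proposal is correct and takes essentially the same route as the paper's proof: extend the stopped configuration past the red light by a wall of parked fictitious vehicles at spacing $\rho_{\max}^{-1}$ (resp.\ extend $\rho_Y$ by the constant state $\rho_{\max}$, which the paper justifies in Lemma \ref{lem.tilderho} exactly as you do, via the admissibility of the stationary jump enforced by $f(\rho_Y(t,0^-))=0=f(\rho_{\max})$), apply Corollary \ref{cor.estiL1} to the extended pair, and transfer back using the cancellation on $[0,\infty)^2$, the mass-$\ep$ fictitious layer, the $O(h^{-1})$ strip created by the $\delta_h$-localization, and the $O(V_{\max})$ comparison of total variations absorbed by the $+1$ in $K$. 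The only cosmetic deviation is that the paper splits the construction into the cases $X^N(0)\leq -\rho_{\max}^{-1}$ and $X^N(0)>-\rho_{\max}^{-1}$ when placing the fictitious vehicles, whereas your uniform placement (first fictitious car at $0$) also works, precisely because, as you note, the layer $[x^N(t),0)$ carries mass exactly $\ep$ regardless of its width.
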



To prove the proposition in the case where $\rho_Y$ solves \eqref{eq.CLboundary}, we need the following remark.

\begin{Lemma}\label{lem.tilderho} If $\rho$ is an entropy solution to \eqref{eq.CLboundary}, then 
$$
\tilde \rho(t,x):= \left\{\begin{array}{ll}
\rho(t,x) & \text{in}\;  (0,\infty)\times (-\infty,0)\\ 
\rho_{\max} &  (0,\infty)\times (0,\infty)
\end{array}\right. 
$$
is an entropy solution to 
$$
\begin{array}{l}
\tilde \rho(t,x) \in [0,\rho_{\max}] \qquad \text{a.e. in}\; (0,\infty)\times \R\\
(\tilde \rho)_t+(f(\tilde  \rho))_x= 0 \qquad \text{in}\; (0,\infty)\times (-\infty,0)\\ 
\end{array}
$$
\end{Lemma}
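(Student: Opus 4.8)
The plan is to verify that $\tilde\rho$ satisfies the Kruzkov entropy inequality on the whole line $\R$, reducing everything to a single admissibility condition at the interface $x=0$. On $\{x<0\}$ we have $\tilde\rho=\rho$, which is an entropy solution by the hypothesis that $\rho$ solves \eqref{eq.CLboundary}; on $\{x>0\}$ we have $\tilde\rho\equiv\rho_{\max}$, which is trivially an entropy solution because $f(\rho_{\max})=0$ makes the flux constant there. Moreover the strong traces $\rho(t,0^-)$ (which exists by \cite{Ps07}) and $\tilde\rho(t,0^+)=\rho_{\max}$ satisfy $f(\rho(t,0^-))=0=f(\rho_{\max})$, so the Rankine--Hugoniot condition holds at the stationary interface $x=0$ and $\tilde\rho$ is a weak solution on $(0,\infty)\times\R$. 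It thus only remains to check entropy admissibility across $x=0$.

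First I would fix $k\in[0,\rho_{\max}]$ and a nonnegative $\phi\in C_c^\infty((0,\infty)\times\R)$, and write $\eta(k,u)=|u-k|$, $q(k,u)={\rm sgn}(u-k)(f(u)-f(k))$. Testing the entropy inequality on each half-line and integrating by parts, all interior terms have a sign and the only surviving contributions are the boundary terms at $x=0$: on $\{x<0\}$ one obtains $+\int q(k,\rho(t,0^-))\phi(t,0)\,dt$, while on $\{x>0\}$ (where $\eta_t+q_x=0$ identically) one obtains $-\int q(k,\rho_{\max})\phi(t,0)\,dt$. This yields
\[
\iint_\R \bigl(\eta(k,\tilde\rho)\phi_t+q(k,\tilde\rho)\phi_x\bigr)\,dx\,dt \;\geq\; \int_0^\infty \bigl[q(k,\rho(t,0^-))-q(k,\rho_{\max})\bigr]\,\phi(t,0)\,dt,
\]
so it suffices to prove that $q(k,\rho(t,0^-))\geq q(k,\rho_{\max})$ for a.e. $t$ and every $k$.

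This last inequality follows directly from the boundary condition. Using $f(\rho(t,0^-))=0$ and $f(\rho_{\max})=0$ we get $q(k,\rho(t,0^-))=-{\rm sgn}(\rho(t,0^-)-k)f(k)$ and $q(k,\rho_{\max})=-{\rm sgn}(\rho_{\max}-k)f(k)$; for $k<\rho_{\max}$ the latter equals $-f(k)$, so their difference is $f(k)\bigl(1-{\rm sgn}(\rho(t,0^-)-k)\bigr)\geq 0$ since $f\geq 0$, while for $k=\rho_{\max}$ both terms vanish. (One may note that uniform concavity of $f$ with $f(\rho(t,0^-))=0$ forces $\rho(t,0^-)\in\{0,\rho_{\max}\}$, so the left trace is itself extremal, but only $f\geq0$ is actually used.) The initial trace of $\tilde\rho$ is inherited from that of $\rho$ on $\{x<0\}$ and is the constant $\rho_{\max}$ on $\{x>0\}$. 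I expect the only genuinely delicate point to be the legitimacy of the boundary term in the integration by parts; this is precisely what the existence of strong traces \cite{Ps07} guarantees, after which the computation is elementary.
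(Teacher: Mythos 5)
Your proof is correct and is essentially the paper's own argument: the paper carries out your integration by parts rigorously by inserting a cutoff $\psi^\ep(x)=\psi(\ep^{-1}x)$ vanishing near $x=0$ and letting $\ep\to 0^+$, where the strong trace of \cite{Ps07} identifies the limit of the $\psi^\ep_x$-term as exactly your boundary quantity $\int_0^\infty f(k)\bigl({\rm sgn}(\rho(t,0^-)-k)-1\bigr)\phi(t,0)\,dt\leq 0$. Both arguments therefore rest on the same two facts, namely $f(\rho(t,0^-))=f(\rho_{\max})=0$ and $f(k)\geq 0$, so nothing further is needed.
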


\begin{proof} Let $\phi$ be a nonnegative smooth test function with compact support in $(0,\infty)\times \R$. We have to check that, for any $k\geq0$,  
$$
\int_0^\infty\int_{\R} | \tilde \rho-k|\phi_t(t,x) + {\rm sgn}(\tilde \rho-k)(f(\tilde \rho)-f(k)) \phi_x (t,x) \geq 0. 
$$
Let $\psi:\R\to [0,1]$ be a smooth and even function such that $\psi(x)=1$ for $|x|\geq 1$, $\psi(x)=0$ for $|x|\leq 1/2$, $\psi$ is nondecreasing on $(0,\infty)$. For $\ep>0$ we set $\psi^\ep(x)= \psi(\ep^{-1}x)$. As $\tilde \rho$ is an entropy solution on $(0,\infty)\times (-\infty,0)$ and on $(0,\infty)\times (0,\infty)$, we have (for the test function $(t,x)\to \psi^\ep(x)\phi(t,x)$): 
\be\label{aiqkshldjnxcv}
0 \leq \int_0^\infty\int_\R  | \tilde \rho-k|\psi^\ep \phi_t+ {\rm sgn}(\tilde \rho-k)(f(\tilde \rho)-f(k)) (\psi^\ep_x \phi+\psi^\ep \phi_x ) .
\ee
As $\psi^\ep \to1$ a.e., one easily checks that 
$$
\lim_{\ep\to 0^+} \int_0^\infty\int_\R  | \tilde \rho-k|\psi^\ep \phi_t = \int_0^\infty\int_\R | \tilde \rho-k| \phi_t 
$$
and
$$
\lim_{\ep\to 0^+} \int_0^\infty\int_\R {\rm sgn}(\tilde \rho-k)(f(\tilde \rho)-f(k)) \psi^\ep \phi_x  =  \int_0^\infty\int_\R{\rm sgn}(\tilde \rho-k)(f(\tilde \rho)-f(k))\phi_x  .
$$
On the other hand 
\begin{align*}
& \int_0^\infty\int_\R  {\rm sgn}(\tilde \rho-k)(f(\tilde \rho)-f(k)) \psi^\ep_x \phi = \int_0^\infty\int_{[-\ep,\ep]}  {\rm sgn}(\tilde \rho(t,x)-k)(f(\tilde \rho)-f(k)) \psi^\ep_x \phi\\ 
&\qquad = \int_0^\infty\int_{[-1,1]}  {\rm sgn}(\tilde \rho(t,\ep y)-k)(f(\tilde \rho(t,\ep y))-f(k)) \psi_x( y) \phi(t,\ep y). 
\end{align*}
Hence, by definition of $\tilde \rho$ and the boundary condition  satisfied by $\tilde \rho$, 
\begin{align*}
& \lim_{\ep \to 0^+} \int_0^\infty\int_\R  {\rm sgn}(f(\tilde \rho)-f(k)) \psi^\ep_x \phi\\
&\qquad  =  \int_0^\infty(\int_{[-1,0]}  {\rm sgn}( \rho(t,0^-)-k)(f( \rho(t,0^-))-f(k)) \psi_x( y) \phi(t,0)\\
& \qquad \qquad + \int_{[0,1]}  {\rm sgn}(\rho_{\max}-k)(f(\rho_{\max})-f(k)) \psi_x(y) \phi(t,0) \; )\\ 
&\qquad = \int_0^\infty(\int_{[-1,0]} - {\rm sgn}( \rho(t,0^+)-k) f(k) \psi_x( y) \phi(t,0) - \int_{[0,1]}  f(k) \psi_x(y) \phi(t,0) \; ) \\ 
&\qquad = \int_0^\infty(\int_{[-1,0]} ( f(k)  \phi(t,0) ({\rm sgn}(\rho(t,0^+)-k) -1)  \; )  \; \leq \; 0,
\end{align*}
where we used the fact that $\psi(0)=0$ and $\psi(\pm 1)=1$ in the last equality. Thus letting $\ep\to 0$ in \eqref{aiqkshldjnxcv} gives the result. 
\end{proof}

\begin{proof}[Proof of Proposition \ref{prop.rateFeu}] We start with a  construction similar to the one in the proof of Proposition \ref{prop.BoundBVRed}: we consider the solution $(\tilde X^i)_{i\in \N}$  to the equation 
$$
\dot{\tilde X}^j(t)= V(\tilde X^{j+1}(t)-\tilde X^j(t)), \qquad t\geq 0, \; i\in \N, 
$$
with 
$$
\tilde X^i(0)= X^i(0) \; \text{for} \; i\in \{1, \dots, N\}, 
$$
and 
$$
 \tilde X^i(0)= \left\{\begin{array}{ll}
 (i-N-1) \rho_{\max}^{-1}  & \text{if}\; X^N(0)\leq -\rho_{\max}^{-1} \\ 
 X^N(0) +  (i-N) \rho_{\max}^{-1}&\text{otherwise}
 \end{array}\right. \qquad  \text{for}\; i\geq N+1. 
$$
We note that 
$$
\tilde X^i(t)= X^i(t) \; \text{if} \; i\in \{1, \dots, N\}, \qquad \tilde X^i(t)= \tilde X^i(0)\; \text{if}\; i\geq N+1.
$$
We set 
$$
\tilde \rho_X(t,x) = \sum_{i\in \N} \tilde y^i(t){\bf 1}_{[\tilde x^i(t), \tilde x^{i+1}(t))},
$$
where 
$$
\tilde y^i(t)= \frac{1}{\tilde X^{i+1}(\ep^{-1}t)-\tilde X^i(\ep^{-1}t)}, \qquad \tilde x^i(t)=\ep \tilde X^i(\ep^{-1}t).
$$
We note that 
$$
\tilde \rho_X(t,x)= \rho_{\max} \qquad \text{for}\; x\geq 0, \qquad \tilde \rho_X(t,x)= \rho_X(t,x)\; \text{for}\; x\leq x^N(t). 
$$
If $Y$ solves \eqref{eq.Yifeu}, we  define $\tilde \rho_Y$ in a symmetric way. In the case where $\rho_Y$ solves \eqref{eq.CLboundary}, we use  Lemma \ref{lem.tilderho} to define $\tilde \rho_Y$. We then obtain by Corollary  \ref{cor.estiL1}, as $\tilde \rho_X=\tilde \rho_Y= \rho_{\max}$ in $[0,\infty)\times [0,\infty)$: for any $\psi\in C_c^\infty((0,\infty)\times \R, \R_+)$ and $h\geq 1$, 
\begin{align*}
&\iiint \Bigl\{  |\tilde \rho_X(t,x)-\tilde \rho_Y(t,y)|  \psi_t(t, \frac{x+y}{2}) \delta_h(\frac{y-x}{2}) \\ 
& \qquad + {\rm sgn}(\tilde \rho_X(t,x)-\tilde \rho_Y(t,y)) (f(\tilde \rho_X(t,x))-f(\tilde \rho_Y(t,y))) \delta_h(\frac{y-x}{2})\psi_x(t, \frac{x+y}{2})
\Bigr\}\; dtdxdy \\ 
& \geq - C\ep \tilde K  \int_0^\infty (h^2 \|\psi(t, \cdot)\|_\infty+ h\|\psi_x(t, \cdot)\|_\infty)dt ,
\end{align*}
where 
$$
\tilde K=\left\{\begin{array}{ll}  \sup_t TV(v(\tilde \rho_X(t))) +\sup_t TV(v (\tilde \rho_Y(t))) & \mbox{\rm if $\rho_Y$ solves \eqref{def.rohYYYstopped},}\\
 \sup_t TV(v(\tilde \rho_X(t))) & \mbox{\rm if $\rho_Y$ solves \eqref{eq.CLboundary}.}
\end{array}\right. 
$$
As $\tilde \rho_X=\tilde \rho_Y$ in $[0,\infty)\times [0,\infty)$ and  $\delta$ has a support in $[-1,1]$, we have 
\begin{align*}
& \iiint \Bigl\{  |\tilde \rho_X(t,x)-\tilde \rho_Y(t,y)|  \psi_t(t, \frac{x+y}{2}) \delta_h(\frac{y-x}{2})\Bigr\} dxdydt \\
&= \int_0^\infty \int_{-\infty}^{h^{-1}}\int_{-\infty}^{h^{-1}} \Bigl\{  |\tilde \rho_X(t,x)-\tilde \rho_Y(t,y)|  \psi_t(t, \frac{x+y}{2}) \delta_h(\frac{y-x}{2})\Bigr\}  dxdydt .
\end{align*}
The same equality holds for $\rho_X$ and $\rho_Y$. 
Hence 
\begin{align*}
& \Bigl |  \iiint \Bigl\{  |\tilde \rho_X(t,x)-\tilde \rho_Y(t,y)| \psi_t(t, \frac{x+y}{2}) \delta_h(\frac{y-x}{2})\Bigr\} dxdydt\\
& \qquad-  \iiint \Bigl\{  | \rho_X(t,x)- \rho_Y(t,y)|  \psi_t(t, \frac{x+y}{2})  \delta_h(\frac{y-x}{2})\Bigr\} dxdydt \Bigr | \\
& \leq    \int_0^\infty\iint_{(-\infty,h^{-1})^2} \Bigl\{  |\tilde \rho_X(t,x)- \rho_X(t,y)| | \psi_t(t, \frac{x+y}{2})| \delta_h(\frac{y-x}{2})\Bigr\} dxdydt\\
& \qquad +    \int_0^\infty\iint_{(-\infty,h^{-1})^2} \Bigl\{  |\tilde \rho_Y(t,x)- \rho_Y(t,y)| | \psi_t(t, \frac{x+y}{2})| \delta_h(\frac{y-x}{2})\Bigr\} dxdydt.
\end{align*}
On the other hand, recalling that $\tilde \rho_X(t,\cdot)= \rho_X(t,\cdot)$  in $(-\infty, x^N(t))$ and that $\delta_h$ has integral $1$, 
\begin{align*}
& \int_{-\infty}^{h^{-1}}\int_{-\infty}^{h^{-1}}   |\tilde \rho_X(t,x) -\rho_X(t,x)| | \psi_t(t, \frac{x+y}{2})|  \delta_h(\frac{y-x}{2})\ dxdy \\ 
&\qquad  \leq \|\psi_t(t,\cdot)\|_\infty
\int_{x^N(t)}^{h^{-1}} |\tilde \rho_X(t,x) -\rho_X(t,x)|  dx . 
\end{align*}
If $X^N(0)\leq -\rho_{\max}^{-1}$, then $\tilde x^{N+1}(t)=0$ and
\begin{align*}
& \int_{x^N(t)}^{h^{-1}} |\tilde \rho_X(t,x) -\rho_X(t,x)|  dx \\
& \qquad = \int_{x^N(t)}^{\tilde x^{N+1}(t)} |\tilde \rho_X(\cdot,x) -\rho_X(t,x)|  dx + \int_{\tilde x^{N+1}(t)}^{h^{-1}} |\tilde \rho_X(\cdot,x) -\rho_X(t,x)| dx \\ 
& \qquad \leq  ({\tilde x^{N+1}(t)}-x^N(t))\tilde y^N(t)  + \rho_{\max} h^{-1}  \; \leq  \; \ep +\rho_{\max} h^{-1}. 
\end{align*}
If $X^N(0)> -\rho_{\max}^{-1}$, then 
\begin{align*}
& \int_{x^N(t)}^{h^{-1}} |\tilde \rho_X(t,x) -\rho_X(t,x)|  dx 
 = (h^{-1}-x^N(t))\rho_{\max}    \; \leq \; \rho_{\max} (\rho_{\max}\ep +h^{-1}). 
\end{align*}
Arguing in the same way for $\rho_Y$, this shows that 
\begin{align*}
& \Bigl |  \iiint \Bigl\{  |\tilde \rho_X(t,x)-\tilde \rho_Y(t,y)| \psi_t(t, \frac{x+y}{2}) \delta_h(\frac{y-x}{2})\Bigr\} dt dxdy\\
& \qquad-  \iiint \Bigl\{  | \rho_X(t,x)- \rho_Y(t,y)|  \psi_t(t, \frac{x+y}{2})  \delta_h(\frac{y-x}{2})\Bigr\} dt dxdy \Bigr |  \leq C (\ep +h^{-1}) \int_0^\infty \|\psi_t(t,\cdot)\|_\infty dt. 
\end{align*}
In the same way, 
\begin{align*}
&\Bigl | \iiint \Bigl\{ {\rm sgn}(\tilde \rho_X(t,x)-\tilde \rho_Y(t,y)) (f(\tilde \rho_X(t,x))-f(\tilde \rho_Y(t,y))) \delta_h(\frac{y-x}{2})\psi_x(t, \frac{x+y}{2}) \Bigr\}\; dtdxdy \\ 
& \qquad - \iiint \Bigl\{ {\rm sgn}( \rho_X(t,x)- \rho_Y(t,y)) (f( \rho_X(t,x))-f( \rho_Y(t,y))) \delta_h(\frac{y-x}{2})\psi_x(t, \frac{x+y}{2}) \Bigr\}\; dtdxdy\Bigr | \\ 
& \leq C(\ep +h^{-1}) \int_0^\infty \|\psi_x(t,\cdot)\|_\infty  dt . 
\end{align*}
To conclude, we  need to estimate $\tilde K$. We have 
\begin{align*}
TV(v(\tilde \rho_X(t))) & = \sum_{i=0}^\infty |v(\tilde y^{i+1}(t))-v(\tilde y^i(t))| = \sum_{i=0}^{N-2} |v( y^{i+1}(t))-v( y^i(t))|+ |-v(\tilde y^{N-1}(t))|\\  
& \leq TV(v( \rho_X(t)))+ \|v\|_\infty \leq CK, 
\end{align*} 
since $v(\tilde y^i(t))= v(\rho_{\max})= 0$ for $i\geq N$. The symmetric inequality holds for $\rho_Y$. This proves that 
\begin{align*}
&\iiint \Bigl\{  |\rho_X(t,x)-\rho_Y(t,y)|  \psi_t(t, \frac{x+y}{2}) \delta_h(\frac{y-x}{2}) \\ 
& \qquad + {\rm sgn}(\rho_X(t,x)-\rho_Y(t,y)) (f(\rho_X(t,x))-f(\rho_Y(t,y)))\psi_x(t, \frac{x+y}{2}) \delta_h(\frac{y-x}{2})
\Bigr\}\; dtdxdy \\ 
& \geq - C \int_0^\infty (\ep K h^2 \|\psi(t, \cdot)\|_\infty+ (\ep Kh+h^{-1})\|\psi_x(t, \cdot)\|_\infty +
 (\ep +h^{-1})\|\psi_t(t, \cdot)\|_\infty  )dt,
\end{align*}
We conclude then by a cut-off argument. 
\end{proof}

\subsection{A local  compactness  result in $L^1$}

In order to pass to the limit in our problem, we need to establish some $L^1$ compactness of the solution far from the junction. More precisely, we consider a family of solutions $(X^{\ep, i})_{i=1, \dots, N}$ to the follow-the-leader system: 
$$
\dot X^{\ep,N}(t)\geq0, \qquad \dot X^{\ep, i}(t)= V(X^{\ep, i+1}(t)-X^{\ep,i}(t)), \qquad i=1, \dots, N-1, \; t\geq 0. 
$$
Note that, in contrast with the previous subsection, we do not require any special behavior of $X^{\ep,N}$. We set as usual 
$$
\rho^\ep(t,x) = \sum_{i=1}^{N-1} y^{\ep, i}(t) {\bf 1}_{[x^{\ep, i}(t), x^{\ep, i+1}(t))}
$$
where 
$$
x^{\ep, i}(t)= \ep X^{\ep, i}(\ep^{-1}t) , \qquad i=1, \dots, N,
$$
and 
$$
y^{\ep,i}(t)= \frac{1}{X^{\ep, i+1}(t)-X^{\ep, i}(t)} = \frac{\ep}{x^{\ep,i+1}(t)-x^{\ep,i}(t)}, \qquad i=1, \dots, N-1. 
$$
Our aim is to obtain $L^1-$compactness of $\rho^\ep$ far from $x^{\ep,N}(t)$. 

\begin{Proposition} \label{prop.localcompact} Fix $[t_1,t_2]\subset [0,\infty)$ and $[a_1,a_2]\subset \R$. Assume that
$$
N\ep \leq C_0,\qquad \|\rho^\ep\|_{L^\infty((t_1,t_2)\times (a_1,a_2))}\leq \rho_{\max}\qquad {\rm and}\qquad x^{\ep,N}(0)\geq a_2,
$$
for some constant $C_0$. Then, the family $(\rho^\ep)$ is relatively compact in $L^1((t_1,t_2)\times (a_1,a_2))$ and any cluster point $\rho$ as $\ep\to 0$ is an entropy solution to 
\be\label{eq.CLLLL}
\partial_t \rho+ \partial_x(f(\rho))= 0 \qquad \text{in}\; (t_1,t_2)\times (a_1,a_2).
\ee
\end{Proposition}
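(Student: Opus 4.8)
The plan is to prove relative compactness via a BV (or Helly-type) argument applied to the rescaled velocity profile, and then to identify any cluster point as an entropy solution by passing to the limit in the approximate Kruzhkov inequality \eqref{ineq.K}. First I would establish the compactness. Since the assumption $x^{\ep,N}(0)\geq a_2$ guarantees that the last vehicle starts to the right of the observation window $[a_1,a_2]$, and the flow is order preserving, on the region $(t_1,t_2)\times(a_1,a_2)$ the density $\rho^\ep$ agrees with the density of a genuine (unconstrained) follow-the-leader configuration to which Proposition \ref{prop.DiFR} applies. The BV estimate there gives $TV(v(\rho^\ep(t,\cdot)))\leq C(1+t^{-1})$ on the support, uniformly in $\ep$; combined with the $L^\infty$ bound $\rho^\ep\leq\rho_{\max}$ and the fact that $v$ is a bi-Lipschitz reparametrization on the relevant range (so a BV bound on $v(\rho^\ep)$ yields one on $\rho^\ep$ away from the vacuum), this furnishes a uniform spatial BV bound on $(\tau,t_2)\times(a_1,a_2)$ for any $\tau>t_1$. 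A uniform-in-time control (via the equation, or by a standard estimate on $\int|\rho^\ep(t+s,\cdot)-\rho^\ep(t,\cdot)|$) then upgrades this to compactness in $L^1((t_1,t_2)\times(a_1,a_2))$ by the Aubin--Lions / Helly selection principle, extracting a subsequence $\rho^\ep\to\rho$ in $L^1$ with $\rho\in[0,\rho_{\max}]$ a.e.

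Next I would identify the limit as an entropy solution. Fix $k\in[0,\rho_{\max}]$ and a nonnegative $\phi\in C^1_c((t_1,t_2)\times(a_1,a_2))$. Apply \eqref{ineq.K} to $\rho^\ep$ (valid because on the support of $\phi$ the configuration coincides with an unconstrained follow-the-leader flow, so $\rho^\ep$ is the $\rho_X$ of Proposition \ref{prop.DiFR}). The right-hand side is bounded by $\ep\,\sup_t TV(v(\rho^\ep(t)))\int\|\phi_x(t,\cdot)\|_\infty\,dt$; using the BV bound (the support of $\phi$ is compactly contained in $t>t_1$, so $t^{-1}$ stays bounded there) this is $O(\ep)$ and vanishes as $\ep\to0$. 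On the left-hand side, $\rho^\ep\to\rho$ in $L^1$ and $f$ is continuous, so $|\rho^\ep-k|\to|\rho-k|$ and $\mathrm{sgn}(\rho^\ep-k)(f(\rho^\ep)-f(k))\to\mathrm{sgn}(\rho-k)(f(\rho)-f(k))$ in $L^1$ (the sign factor causing no trouble under $L^1$ convergence once integrated against a fixed smooth $\phi$, since $\mathrm{sgn}(\rho^\ep-k)(f(\rho^\ep)-f(k))$ is bounded and converges a.e.\ along a further subsequence). Passing to the limit yields the Kruzhkov entropy inequality for $\rho$ with entropy $|\rho-k|$, which is exactly \eqref{eq.CLLLL} in the entropy sense on $(t_1,t_2)\times(a_1,a_2)$.

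The main obstacle I anticipate is the compactness step near the initial slice $t=t_1$, where the BV bound $C(1+t^{-1})$ degenerates. One clean way around this is to prove $L^1$ compactness on each $(\tau,t_2)\times(a_1,a_2)$ with $\tau>t_1$, then exploit the uniform $L^\infty$ bound together with weak-$\ast$ control to handle the thin slab $(t_1,\tau)$ and let $\tau\downarrow t_1$; since the test functions $\phi$ in the entropy formulation have compact support in the open time interval, the entropy inequality itself is unaffected by the behavior at $t_1$. A secondary technical point is justifying that $\rho^\ep$ restricted to the window really solves the \emph{unconstrained} dynamics so that Proposition \ref{prop.DiFR} applies verbatim: this rests on the hypothesis $x^{\ep,N}(0)\geq a_2$ and the order preservation of the scalar follow-the-leader system, which together ensure that no vehicle from beyond the window enters it and that the distinguished right boundary condition on $X^{\ep,N}$ (merely $\dot X^{\ep,N}\geq0$) never influences the dynamics inside $(a_1,a_2)$ on $[t_1,t_2]$. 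I would state this localization carefully, as it is the conceptual heart of reducing the constrained setting to the classical one.
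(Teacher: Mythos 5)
Your proof hinges on the localization claim that, because $x^{\ep,N}(0)\geq a_2$ and the flow is order preserving, the density inside $(t_1,t_2)\times(a_1,a_2)$ coincides with that of a genuine \emph{unconstrained} follow-the-leader configuration, so that Proposition \ref{prop.DiFR} applies verbatim. This claim is false, and it is the step on which the whole argument collapses. In the follow-the-leader model each vehicle's velocity is determined by the vehicle \emph{ahead} of it, so information propagates from right to left: the hypothesis $x^{\ep,N}(0)\geq a_2$ keeps the leader itself out of the window, but not its influence. Concretely, the leader is only assumed to satisfy $\dot X^{\ep,N}\geq 0$; if it repeatedly stops and restarts (which is exactly what happens in the paper's application, where the leading vehicle on a branch faces a traffic light), each stop launches a congestion shock and each restart a rarefaction, and these waves travel leftward into $(a_1,a_2)$. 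With stop--go cycles of unscaled period $\tau$, the window contains on the order of $1/(\ep\tau)$ such waves at a fixed time, so $\sup_t TV(v(\rho^\ep(t,\cdot)))$ is \emph{not} bounded uniformly in $\ep$. This kills both of your steps: the uniform spatial BV bound needed for the Helly/Aubin--Lions compactness does not exist, and the error term $\ep\,\sup_t TV(v(\rho^\ep(t)))$ in \eqref{ineq.K} need not vanish (besides which, \eqref{ineq.K} is only established in \cite{rigorousLWR} for the system with free leader $\dot X^N=V_{\max}$, not for an arbitrary nondecreasing $X^{\ep,N}$). The paper states this explicitly right after the proposition: the result ``does not require (nor establishes) BV bounds or a special behavior of $X^{\ep,N}$''; indeed, in the intended application the only available BV bound (Proposition \ref{prop.gobBV}) blows up like $\ep^{-1}(T_1^{-1}\vee(T-T_1)^{-1})\sim\ep^{\alpha-1}$ under \eqref{hyp.T}, so a BV-based compactness proof cannot serve the purpose for which Proposition \ref{prop.localcompact} is designed.

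The paper's actual proof avoids BV entirely: it passes to the primitive $u^\ep(t,x)=\int_{-\infty}^x\rho^\ep(t,y)\,dy$, shows $u^\ep$ is uniformly Lipschitz (hence locally uniformly convergent along a subsequence), proves that the limit $u$ is a viscosity solution of $\partial_t u+f(\partial_x u)=0$ and that $u^\ep$ satisfies this Hamilton--Jacobi equation approximately in a weak (distributional) sense with an $O(\ep)$ error, and then compares the two facts through the Young measure $w$ of $(\rho^\ep)$: the weak limit of $f(\rho^\ep)$ equals $f(\partial_x u)$ pointwise a.e., and the strict concavity of $f$ forces $w(t,x,\cdot)$ to be a Dirac mass at $\partial_x u(t,x)$. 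This single argument yields \emph{both} the strong $L^1$ convergence and the identification of the limit as an entropy solution (via the classical equivalence between viscosity solutions of the HJ equation and entropy solutions for the derivative). If you want to repair your proof, you would need an argument of this compensated-compactness type; any route through uniform BV bounds is structurally incompatible with the hypotheses of the proposition.
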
 

In contrast with the results of \cite{rigorousLWR} recalled in Proposition \ref{prop.DiFR} above, the result does not require (nor establishes) BV bounds or a special behavior of $X^{\ep,N}$. It is of  local nature. 

\begin{proof} Let us set 
$$
u^\ep(t,x)= \int_{-\infty}^x \rho^\ep(t,y)dy. 
$$
We note for later use that 
\begin{align}\label{eg.uep}
u^\ep(t,x)&= \sum_{i=1}^{N-1} y^{\ep,i}(t)  \int_{-\infty}^x {\bf 1}_{[x^{\ep, i}(t), x^{\ep, i+1}(t))}(y)dy \notag \\
&= \sum_{i=1}^{N-1} (\ep(i-1) + y^{\ep,i}(t) (x-x^{\ep,i}(t))) {\bf 1}_{[x^{\ep, i}(t), x^{\ep, i+1}(t))}(x)+ N\ep {\bf 1}_{[x^{\ep, N}(t), \infty) }(x). 
\end{align}
As $(\rho^\ep)$ is bounded in $L^\infty((t_1,t_2)\times (a_1,a_2))$,  $(\rho^\ep)$ is relatively compact for the $L^\infty-$weak-* convergence. We denote again by $(\rho^\ep)$ a converging subsequence and let $\rho$ be its limit. We have to check that $\rho$ is  an entropy solution to \eqref{eq.CLLLL} and that $(\rho^\ep)$ converges to $\rho$ in $L^1((t_1,t_2)\times (a_1,a_2))$.  

The strategy of proof is  the following: we first show  that $u^\ep$ is uniformly continuous and bounded in $(t_1,t_2)\times (a_1,a_2)$, so that we can assume that $(u^\ep)$ converges along the same subsequence to a limit denoted by $u$. Note that $\partial_x u= \rho$. 
Next we   show that $u$ is a viscosity solution to the Hamilton-Jacobi equation 
\be\label{eq.HJJJJ}
\partial _t u + f(\partial_x u)=0\qquad \text{in}\; (t_1,t_2)\times (a_1,a_2). 
\ee
This convergence is related to similar results in \cite{FSZ18, FoSa20, FoImMo09}. By a classical argument (see \cite{CFM24} for the local version we use here, and the references therein), this implies that $\rho=\partial_x u$ is an entropy solution to \eqref{eq.CLLLL}. On the other hand we prove the existence of a constant $C$ such that, for any $\phi\in C^\infty_c((t_1,t_2)\times (a_1,a_2))$, 
\be\label{approxHJ}
\Bigl |\int_{t_1}^{t_2}\int_{a_1}^{a_2} (- u^\ep \partial_t \phi  + f(\partial_x u^\ep)\phi) \Bigr| \leq C\ep \| \phi\|_{W^{1,\infty}} \ep. 
\ee
Let $w=w(t,x,dz)$ be the Young measure associated to (a further subsequence of) $(\rho^\ep)$. Passing to the limit in \eqref{approxHJ} leads to the equality
$$
\int_{t_1}^{t_2}\int_{a_1}^{a_2} \int_{\R}(- u(t,x) \partial_t \phi(t,x)  + f(z)\phi(t,x)) w(t,x,dz)dxdt =0. 
$$
We then easily conclude from the uniform concavity of $f$ that  $w(t,x,dz)= \delta_{\rho(t,x)}$ a.e., and therefore that the convergence of $\rho^\ep$ to $\rho$ actually holds in $L^1$. \\

\noindent {\it Step 0: $u^\ep$ is uniformly Lipschitz continuous.} Note  first that $u^\ep$ is  continuous in time-space and piecewise $C^1$ in time. As $\rho^\ep$ is bounded by $\rho_{\max}$, $u^\ep$ is uniformly Lipschitz continuous in space in $(t_1,t_2)\times (a_1,a_2)$.  Fix $(t,x)\in (t_1,t_2)\times (a_1,a_2)$ such that $x\in (x^{\ep,i}(t), x^{\ep,i+1}(t))$ for some $i\in \{1, \dots, N-1\}$. Then, by \eqref{eg.uep},  
\begin{align*}
\partial_t u^\ep (t,x) & = \dot y^{\ep,i}(t) (x-x^{\ep,i}(t)) - y^{\ep,i}(t) \dot x^{\ep,i}(t) \\ 
&= -\ep^{-1} (y^{\ep,i}(t))^2 (x-x^{\ep,i}(t))(v(y^{\ep,i+1}(t))-v(y^{\ep,i}(t))) - y^{\ep,i}(t) v(y^{\ep,i}(t)), 
\end{align*}
so that 
\begin{align*}
|\partial_t u^\ep (t,x)| & \leq \ep^{-1} (y^{\ep,i}(t))^2 (x^{\ep,i+1}(t)-x^{\ep,i}(t)) |v(y^{\ep,i+1}(t))-v(y^{\ep,i}(t))| + y^{\ep,i}(t) v(y^{\ep,i}(t))\\ 
& \leq 3 y^{\ep,i}(t) V_{\max} \leq 3\rho_{\max} V_{\max}, 
\end{align*}
since $y^{\ep,i}(t) (x^{\ep,i+1}(t)-x^{\ep,i}(t))=\ep$ and $\|\rho^\ep\|_\infty\leq \rho_{\max}$. If $x< x^{\ep,i}(t)$ or $x> x^{\ep,N}(t)$, then $\partial_tu^\ep (t,x)=0$. This shows that $u^\ep$ is also uniformly Lipschitz continuous in time. \\ 

\noindent {\it Step 1: $u$ is a viscosity solution of \eqref{eq.HJJJJ}.} Let us consider 
$$
\tilde u^\ep(t,x) = \sum_{i=1}^{N-1} \ep(i-1)  {\bf 1}_{[x^{\ep, i}(t), x^{\ep, i+1}(t))}(x)+ N\ep {\bf 1}_{[x^{\ep, N}(t), \infty) }(x).
$$
Recalling \eqref{eg.uep}, we have 
\begin{align*}
|\tilde u^\ep(t,x) -u^\ep(t,x)| & \leq \sum_{i=1}^{N-1} y^{\ep,i}(t) (x-x^{\ep,i}(t))) {\bf 1}_{[x^{\ep, i}(t), x^{\ep, i+1}(t))}(x) \\ 
& \leq \sum_{i=1}^{N-1} y^{\ep,i}(t) (x^{\ep, i+1}(t) -x^{\ep,i}(t))) {\bf 1}_{[x^{\ep, i}(t), x^{\ep, i+1}(t))}(x) \leq \ep. 
\end{align*}
As $u^\ep$ converges uniformly to $u$, $\tilde u^\ep$ also converges uniformly to $u$. In order to prove that $u$ is a subsolution, let $\phi$ be a $C^2$ test function such that $u-\phi$ has a strict local maximum at some point $(\bar t, \bar x)\in (t_1,t_2)\times (a_1,a_2)$. We have to check that 
$$
\partial_t \phi(\bar t,\bar x) + f(\partial_x\phi(\bar t,\bar x)) \leq 0. 
$$
As $u$ is nondecreasing in $x$ and nonincreasing in $t$ (because this is the case for the $u^\ep$), we have $\partial_x\phi(\bar t, \bar x)\geq 0$ and  $\partial_t\phi(\bar t, \bar x)\leq 0$. If  $\partial_x\phi(\bar t, \bar x)= 0$, then 
$$
0\geq \partial_t \phi(\bar t, \bar x) = \partial_t \phi(\bar t,\bar x) + f(\partial_x\phi(\bar t,\bar x)), 
$$ 
since $f(0)=0$, and the claim is proved. We now assume that $\partial_x\phi(\bar t, \bar x)> 0$.  By standard argument in viscosity solutions, there exists $(t^\ep, x^\ep)$, which converges to $(\bar t, \bar x)$, such that $\tilde u^\ep-\phi$ has a local maximum at $(t^\ep, x^\ep)$. This means that, for $(t,x)$ close to $(t^\ep, x^\ep)$, 
\be\label{iakqzjehsdn}
u^\ep(t,x) \leq u^\ep(t^\ep, x^\ep)+ \phi(t,x)-\phi(t^\ep, x^\ep).
\ee
For $\ep$ small enough, $\partial_x\phi(t^\ep,x^\ep) >0$. Note then that there exists $i_0\in \{1, \dots, N\}$ such that $x^\ep=x^{\ep, i_0}(t^\ep)$, because $\tilde u^\ep$ is locally constant anywhere else. As $t\to \tilde u^\ep(t, x^{\ep, i_0}(t))$ is constant, we get by \eqref{iakqzjehsdn}: 
$$
0= \partial_t \phi(t^\ep, x^\ep)+\partial_x \phi(t^\ep, x^\ep) \dot x^{\ep, i_0}(t),
$$
and thus 
\be\label{lazkqejsdf}
0= \partial_t \phi(t^\ep, x^\ep)+\partial_x \phi(t^\ep, x^\ep) v(y^{\ep, i_0}(t^\ep)).
\ee
If, up to a subsequence, $y^{\ep, i_0}(t^\ep)$ tends to $0$, then, letting $\ep\to 0$ in the previous equality gives
$$
0 = \partial_t \phi(\bar t, \bar x)+\partial_x \phi(\bar t, \bar x) V_{\max} \geq \partial_t \phi(\bar t, \bar x)+\partial_x \phi(\bar t, \bar x) v(\partial_x \phi(\bar t, \bar x)) =   \partial_t \phi(\bar t, \bar x)+f(\partial_x \phi(\bar t, \bar x)),
$$
and the claim is proved. Otherwise, $y^{\ep, i_0}(t^\ep)$ is bounded below by a constant $K^{-1}$. Let us choose $t=t^\ep$ and $x=x^{\ep, i_0+1}(t^\ep)$ in \eqref{iakqzjehsdn}: as $u^\ep(t^\ep,x^{\ep, i_0+1}(t^\ep))= u^\ep(t^\ep,x^{\ep, i_0}(t^\ep))+\ep$,  we obtain 
\begin{align*}
\ep & \leq \phi(t^\ep, x^{\ep,i_0+1}(t^\ep))- \phi(t^\ep, x^{\ep,i_0}(t^\ep)) )\\
&  \leq   \partial_x\phi(t^\ep, x^{\ep,i_0}(t^\ep))(x^{i_0+1}(t^\ep) -x^{\ep,i_0}(t^\ep))+C (x^{\ep,i_0+1}(t^\ep) -x^{\ep,i_0}(t^\ep))^2, 
\end{align*}
where $C$ depends on $\|\partial_{xx}\phi\|_\infty$ only. Note that $\frac{\ep}{y^{\ep,i_0}(t^\ep)}= x^{\ep,i_0+1}(t^\ep) -x^{\ep,i_0}(t^\ep)\leq K\ep$. So 
$$
y^{\ep, i_0}(t^\ep) = \frac{\ep}{x^{\ep,i_0+1}(t^\ep) -x^{\ep,i_0}(t^\ep)}  \leq \partial_x\phi(t^\ep, x^{\ep,i_0}(t^\ep)) +CK\ep. 
$$
As $v$ is nonincreasing and locally Lipschitz in $[K^{-1}/2, \infty)$, we get 
$$
v(y^{\ep, i_0}(t^\ep)) \geq v ( \partial_x\phi(t^\ep, x^{\ep,i_0}(t^\ep))) - CK\ep.
$$
We plug this inequality into \eqref{lazkqejsdf} and then let $\ep\to 0$ to obtain the claim. \\ 

We now prove that $u$ is a supersolution. Let $\phi$ be a $C^2$ test function such that $u-\phi$ has a strict local minimum at some point $(\bar t, \bar x)\in (t_1,t_2)\times (x_1,x_2)$. We have to check that 
$$
\partial_t \phi(\bar t,\bar x) + f(\partial_x\phi(\bar t,\bar x)) \geq 0. 
$$
Let 
$$
\hat u^\ep(t,x) :=  \sum_{i=1}^{N-1} \ep(i-1)  {\bf 1}_{(x^{\ep, i}(t), x^{\ep, i+1}(t)]}(x)+ N\ep {\bf 1}_{(x^{\ep, N}(t), \infty) }(x).
$$
Note that $\hat u^\ep$ is the lower semi-continuous envelope of $\tilde u^\ep$ and thus  converges locally uniformly  to $u$. As $\hat u^\ep$  is  lower semi-continuous, for $\ep$ small enough,  there exists $(t^\ep, x^\ep)\in (t_1,t_2)\times (x_1,x_2)$, which converges to $(\bar t, \bar x)$, such that $\hat u^\ep-\phi$ has a local minimum at $(t^\ep, x^\ep)$. Then, for $(t,x)$ close to $(t^\ep, x^\ep)$,  
\be\label{ajklhzerdnf}
\hat u^\ep (t,x) \geq \hat u^\ep(t^\ep, x^\ep) +\phi(t,x)-\phi(t^\ep, x^\ep). 
\ee
If (up to a subsequence) $\hat u^\ep$ is locally constant near $(t^\ep, x^\ep)$, then $\partial_t \phi(t^\ep, x^\ep)= \partial_x \phi(t^\ep, x^\ep)=0$ and the claim holds. So we can assume that $\hat u^\ep$ is not locally constant near $(t^\ep, x^\ep)$, which implies the existence of $i_0\in \{1, \dots, N\}$ such that $x^\ep= x^{\ep, i_0}(t^\ep)$. Note that $i_0<N$ because $x^\ep < a_2$. As $t\to \hat u^\ep(t, x^{\ep, i_0}(t))$ is constant, we get 
$$
0= \partial_t \phi(t^\ep, x^\ep)+  \partial_x \phi(t^\ep, x^\ep)v(y^{\ep, i_0}(t^\ep)).
$$
We now note that, if $\partial_x \phi(\bar t, \bar x)=0$, then, as 
$$
0\leq  \partial_t \phi(t^\ep, x^\ep)+  \partial_x \phi(t^\ep, x^\ep)V_{\max}, 
$$
we get 
$$
0\leq  \partial_t \phi(\bar t, \bar x) = \partial_t \phi(\bar t, \bar x)+  f(\partial_x \phi(\bar t, \bar x)),
$$
since $f(0)=0$. Therefore the claim holds. We now suppose that $\partial_x \phi(\bar t, \bar x)>0$. We claim that $x^{\ep, i_0+1}(t^\ep)\leq x^{\ep, i_0}(t^\ep)+ \theta \ep$ (up to a subsequence), where $\theta = 2/\partial_x \phi(\bar t, \bar x)$. Indeed, otherwise, let us choose $t=t^\ep$ and $x= x^{\ep, i_0}(t^\ep)+ \theta \ep$ in \eqref{ajklhzerdnf}. Then 
$$
\ep \geq \phi(t^\ep, x^\ep + \theta \ep)- \phi(t^\ep, x^\ep). 
$$
Dividing by $\ep$ and letting $\ep\to0$ leads to 
$$
1 \geq \theta \partial_x\phi(\bar t,\bar x), 
$$
a contradiction with the definition of $\theta$. So $x^{\ep, i_0+1}(t^\ep)\leq x^{\ep, i_0}(t^\ep)+ \theta \ep$. Let us set $t=t^\ep$ and $x= x^{\ep, i_0+1}(t^\ep)$ in \eqref{ajklhzerdnf}. We get 
\begin{align*}
\ep & \geq \phi(t,x^{\ep, i_0+1}(t^\ep))-\phi(t^\ep, x^{\ep, i_0}(t^\ep))\\ 
& \geq \partial_x \phi(t^\ep, x^\ep) ( x^{\ep, i_0+1}(t^\ep)-x^{\ep, i_0}(t^\ep))- C( x^{\ep, i_0+1}(t^\ep)-x^{\ep, i_0}(t^\ep))^2.
\end{align*}
Hence, arguing as above, we obtain
$$
y^{\ep, i_0}(t^\ep) \geq \partial_x \phi(t^\ep, x^\ep)  -C \theta \ep,
$$
and thus 
$$
v(y^{\ep, i_0}(t^\ep)) \leq v(\partial_x \phi(t^\ep, x^\ep)) +C \theta \ep.
$$
We can then conclude as above. \\

\noindent {\it Step 2: $u^\ep$ is an approximation solution to the HJ equation, in the sense that \eqref{approxHJ} holds.} Fix $\phi\in C^\infty_c((t_1,t_2)\times (a_1,a_2), \R_+)$. We first use  \eqref{eg.uep} and the fact that $x^{\ep, N}(t)\geq x^{\ep, N}(0)\geq a_2$ by assumption and that $\phi$ has a support in $(t_1,t_2)\times (a_1,a_2)$ to obtain  
\begin{align*}
\int_{a_1}^{a_2} u^\ep \phi =   \sum_{i=1}^{N-1} \int_{x^{\ep,i}(t)}^{x^{\ep, i+1}(t)} (\ep(i-1) + y^{\ep,i}(t) (x-x^{\ep,i}(t))) \phi(t,x)dx.
\end{align*}
Therefore 
\begin{align*}
\frac{d}{dt} \int_{a_1}^{a_2} u^\ep \phi  & =  \int_{a_1}^{a_2} u^\ep \partial_t \phi+ \sum_{i=1}^{N-1 }   \int_{x^{\ep,i}(t)}^{x^{\ep, i+1}(t)} (\dot y^{\ep,i}(t) (x-x^{\ep,i}(t)) - y^{\ep,i}(t) 
\dot x^{\ep,i}(t)) \phi(t,x)dx \\
&   + \sum_{i=1}^{N-1 } \Bigl((\ep(i-1)+ y^{\ep,i}(t) (x^{\ep, i+1}(t)-x^{\ep,i}(t))) \phi(t,x^{\ep, i+1}(t))\dot  x^{\ep, i+1}(t) -  (\ep(i-1)\phi(t,x^{\ep,i}(t)))\dot x^{\ep, i}(t)\Bigr) \\ 
& =  \int_{a_1}^{a_2} u^\ep \partial_t \phi+ \sum_{i=1}^{N-1 }   \int_{x^{\ep,i}(t)}^{x^{\ep, i+1}(t)} (\dot y^{\ep,i}(t) (x-x^{\ep,i}(t)) - y^{\ep,i}(t) 
v(y^{\ep,i}(t)) \phi(t,x)dx \\
& \qquad  + \sum_{i=1}^{N-1 } \Bigl(\ep i \phi(t,x^{\ep, i+1}(t))\dot  x^{\ep, i+1}(t) -  \ep(i-1)\phi(t,x^{\ep,i}(t))\dot x^{\ep, i}(t)\Bigr) .
\end{align*}
Note that
$$
\sum_{i=1}^{N-1 } \Bigl(\ep i \phi(t,x^{\ep, i+1}(t))\dot  x^{\ep, i+1}(t) -  \ep(i-1)\phi(t,x^{\ep,i}(t))\dot x^{\ep, i}(t)\Bigr) = \ep (N-1) \phi(t,x^{\ep, N}(t))\dot  x^{\ep, N}(t) = 0
$$
since $x^{\ep, N}(t)\geq a_2$  and $\phi$ has a support in $(t_1,t_2)\times (a_1,a_2)$. Therefore 
\be\label{iuhnjfd}
\frac{d}{dt} \int_{a_1}^{a_2} u^\ep \phi   =  \int_{a_1}^{a_2} (u^\ep \partial_t \phi - f(\rho^\ep) \phi) +R(t),
\ee
where 
$$
R(t):= \sum_{i=1}^{N-1 }  \int_{x^{\ep,i}(t)}^{x^{\ep, i+1}(t)} \dot y^{\ep,i}(t) (x-x^{\ep,i}(t)) \phi(t,x)dx.
$$
We now estimate $R(t)$.  By integration by parts, we have 
\begin{align*}
R(t) &= \frac12  \sum_{i=1}^{N-1 } \Bigl( \dot y^{\ep,i}(t)  (x^{\ep,i+1}(t)-x^{\ep,i}(t))^2 \phi(t,x^{\ep,i+1}(t))   -   \dot y^{\ep,i}(t)  \int_{x^{\ep,i}(t)}^{x^{\ep, i+1}(t)}  (x-x^{\ep,i}(t))^2 \partial_x\phi (t,x)dx\Bigr) \\ 
& = - \frac{1}{2\ep} \sum_{i=1}^{N-1 }\Bigl(  (y^{\ep,i}(t))^2 (x^{\ep,i+1}(t)-x^{\ep,i}(t))^2 \phi(t,x^{\ep,i+1}(t))(v(y^{\ep, i+1}(t))-v(y^{\ep, i}(t))) \\
& \qquad  -(y^{\ep,i}(t) )^2 (v(y^{\ep, i+1}(t))-v(y^{\ep, i}(t))) \int_{x^{\ep,i}(t)}^{x^{\ep, i+1}(t)}  (x-x^{\ep,i}(t))^2 \partial_x\phi (t,x)dx\Bigr) \\ 
& = - \frac{\ep}{2} \sum_{i=1}^{N-1 }  \phi(t,x^{\ep,i+1}(t))(v(y^{\ep, i+1}(t))-v(y^{\ep, i}(t))) \\
& \qquad  +\frac{1}{2\ep} \sum_{i=1}^{N-1 }(y^{\ep,i}(t) )^2 (v(y^{\ep, i+1}(t))-v(y^{\ep, i}(t))) \int_{x^{\ep,i}(t)}^{x^{\ep, i+1}(t)}  (x-x^{\ep,i}(t))^2 \partial_x\phi (t,x)dx\\ 
&= I+II
\end{align*}
with 
\begin{align*}
 I  & =- \frac{\ep}{2}    \sum_{i=1}^{N-1 }  \phi(t,x^{\ep,i+1}(t))(v(y^{\ep, i+1}(t))-v(y^{\ep, i}(t)))  \\ 
& =  - \frac{\ep}{2}  \sum_{i=2}^{N-1 } v(y^{\ep, i}(t)) ( \phi(t,x^{\ep,i}(t))- \phi(t,x^{\ep,i+1}(t))) - \frac{\ep}{2} \phi(t,x^{\ep,N}(t))v(y^{\ep, N}(t))+\frac{\ep}{2} \phi(t,x^{\ep,2}(t))v(y^{\ep, 1}(t))   \\ 
& =   - \frac{\ep}{2}  \sum_{i=2}^{N-1 } v(y^{\ep, i}(t)) \int_{x^{\ep,i}(t)}^{x^{\ep, i+1}(t)} \partial_x \phi(t,x)dx+\frac{\ep}{2} \phi(t,x^{\ep,2}(t))v(y^{\ep, 1}(t))  ,
\end{align*}
so that 
\begin{align*}
 | I |  & \leq  \frac{V_{\max}\ep }{2} \|\partial_x\phi \|_\infty  \sum_{i=1}^{N-1 } \int_{x^{\ep,i}(t)}^{x^{\ep, i+1}(t)}{\bf 1}_{[a_1,a_2]}(x)dx +\ep\| \phi\|_\infty V_{\max}\\ 
&  \leq  V_{\max} \|\phi \|_{W^{1,\infty}} (a_2-a_1+1)  \ep , 
\end{align*}
while 
\begin{align*}
| II |  & =  \frac{1}{2\ep} \Bigl|  \sum_{i=1}^{N-1 } (y^{\ep,i}(t) )^2(v(y^{\ep, i+1}(t))-v(y^{\ep, i}(t))) \int_{x^{\ep,i}(t)}^{x^{\ep, i+1}(t)}  (x-x^{\ep,i}(t))^2 \partial_x\phi (t,x)dx\Bigr|  \\
& \leq V_{\max} \ep^{-1}\|\partial_x\phi \|_\infty \sum_{i=1}^{N-1 } (y^{\ep,i}(t) )^2 (x^{\ep,i+1}(t)-x^{\ep,i}(t))^2 \int_{x^{\ep,i}(t)}^{x^{\ep, i+1}(t)}{\bf 1}_{[a_1,a_2]}(x)dx \\
& \leq V_{\max} \ep\|\partial_x\phi \|_\infty  \int_{x^{\ep,1}(t)}^{x^{\ep, N}(t)}{\bf 1}_{[a_1,a_2]}(x)dx \leq V_{\max} \|\partial_x\phi\|_\infty  (a_2-a_1) \ep. 
\end{align*}
Integrating \eqref{iuhnjfd} over $[t_1,t_2]$ gives \eqref{approxHJ}. \\

\noindent {\it Step 3: Conclusion.}  Recall now that $w=w(t,x,dz)$ is the Young measure associated to (a further subsequence of) $(\rho^\ep)$. Note that 
$$
 \partial_x u(t,x)= \rho(t,x)= \int_{\R} w(t,x,dz).
 $$
Letting $\ep\to 0$ in \eqref{approxHJ} gives, for any test function $\phi\in C^\infty_c((t_1,t_2)\times (a_1,a_2))$: 
$$
\int_{t_1}^{t_2}\int_{a_1}^{a_2} \int_{\R}(- u(t,x) \partial_t \phi(t,x)  + f(z)\phi(t,x)) w(t,x,dz)dxdt =0. 
$$
On the other hand, $u$ being a Lipschitz continuous viscosity solution to \eqref{eq.HJJJJ}, it satisfies the equation a.e. and therefore
$$
\int_{t_1}^{t_2}\int_{a_1}^{a_2} \int_{\R}(- u(t,x) \partial_t \phi(t,x)  + f(\partial_x u(t,x))\phi(t,x))=0. 
$$
This implies that, for a.e. $(t,x)\in (t_1,t_2)\times (a_1,a_2)$, 
$$
f(\partial_x u(t,x))=  \int_{\R} f(z) w(t,x,dz). 
$$
As $f$ is strictly concave, we infer that $w(t,x,dz) = \delta_{\partial_x u(t,x)}(dz)$. Therefore $\rho^\ep=\partial_x u^\ep$ converges in $L^1$ to $\rho=\partial_x u$. 
\end{proof}

\section{Proof of the main result} \label{sec.proofmain}

The goal of the section is to prove Theorem \ref{thm.main}. From now on, we fix a solution $X=(X^i)_{i=1,\dots, N}$ to 
\eqref{eq.X}-\eqref{eq.init}-\eqref{eq.defri}. Recall that the traffic-light is $T$ periodic, green for Road $\mathcal R^1$ on intervals $[0, T_1)$ mod. $T$ and red otherwise. Moreover $\rho^\ep_X$ is defined by \eqref{def.rho}. We also assume that \eqref{Nep=O1}, \eqref{hyp.condinti}, \eqref{BVt=0}, \eqref{limt=0} and \eqref{hyp.T} hold. 

For simplicity of notation we prove the convergence on the scaled time interval $[0,1]$. 

 \subsection{BV estimates for the whole system}
 
 In order to control the $L^1$ distance between two discrete solutions (or a discrete solution and a continuous one), we need to bound the total variation of the discrete solution. For this we use the following convention: on a time interval of the form $(\ep nT, \ep(nT+T_1))$ (for $n\in \N$)---i.e., on which the traffic light is green for Road 1---we define 
 $$
 TV(v(\rho^\ep_X(t, \cdot)) = TV(v(\rho^{\ep,1}_X(t,\cdot) +\rho^{\ep,0}_X(t, \cdot)))+ TV (v(\rho^{\ep,2}_X(t,\cdot){\bf 1}_{(-\infty, x^{\ep,i_2}(t))})),
 $$
 where 
 $$
i_2= {\rm argmax}\{ X^i(t), \; r^i(t)=2\}.
$$
Note that this amount to consider $\mathcal R^1\cup \mathcal R^0$ as a single line and to set $\rho^{\ep,2}_X$ to zero on $[x^{\ep,i_2}(t), \infty)$, where $i_2$ is the index of the right-most vehicle on Road~2. On intervals of the form $(\ep (nT+T_1, \ep(n+1)T)$ (for $n\in \N$), we define  $TV(v(\rho^\ep_X(t, \cdot))$ symmetrically (looking at $\mathcal R^2\cup \mathcal R^0$ as a single line). This convention  introduces a  discontinuity of the total variation at the changes of traffic light. Its main interest is that it is compatible with the conventions used in Section \ref{sec.Esti1branch}.

 \begin{Proposition}\label{prop.gobBV}  There is a constant $K_0>0$, depending on $V$ and on $TV(v(\rho^\ep_X(0)))$ only, such that 
 $$
 TV(v(\rho^\ep_X(t, \cdot)) \leq K_0(1+ \ep^{-1}(T_1^{-1}\vee (T-T_1)^{-1})) \qquad \forall t\in [ 0,1]. 
 $$
 \end{Proposition}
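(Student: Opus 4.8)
The plan is to cut $[0,1]$ into the successive traffic-light phases and, on each of them, to read the whole system as the superposition of one \emph{unconstrained} follow-the-leader model and one \emph{stopped} model, so that the two one-branch results of Section \ref{sec.Esti1branch} apply directly. On a green-$1$ phase $(\ep nT,\ep nT+\ep T_1)$ the road $\mathcal R^1$ together with the outgoing road $\mathcal R^0$, viewed as a single line, obeys the unconstrained dynamics of Proposition \ref{prop.DiFR}, while $\mathcal R^2$ is stopped by a red light at $x=0$ and is governed by Proposition \ref{prop.BoundBVRed}; the roles are exchanged on the green-$2$ phases $(\ep nT+\ep T_1,\ep(n+1)T)$. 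This is exactly the splitting encoded in the convention defining $TV(v(\rho^\ep_X(t,\cdot)))$, so on each phase I may bound the line part by Proposition \ref{prop.DiFR} and the stopped part by Proposition \ref{prop.BoundBVRed}.

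First I would record the two one-phase estimates. On a phase of scaled length $\ell\in\{\ep T_1,\ep(T-T_1)\}$ starting at time $s$, the regularizing part of Proposition \ref{prop.DiFR} gives, at the end of the phase,
$$
TV\bigl(v(\text{line})(s+\ell)\bigr)\le C\bigl(1+\ell^{-1}\bigr),
$$
a bound \emph{independent of the configuration at time $s$}, while the monotonicity part gives $TV(v(\text{line})(t))\le TV(v(\text{line})(s))$ for $t\in[s,s+\ell]$. For the stopped road, Proposition \ref{prop.BoundBVRed} yields $TV(v(\text{stopped})(t))\le TV(v(\text{stopped})(s))+4V_{\max}$ throughout the phase. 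Here $C$ depends only on $V$ and on the diameter of the support, which stays bounded on $[0,1]$ because the scaled velocities are at most $V_{\max}$ and the initial support is controlled by \eqref{BVt=0}; in particular $C$ does not depend on $N$ nor on $\ep$.

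The heart of the matter is the bookkeeping across a switch, say from a green-$1$ to a green-$2$ phase at time $s'=\ep nT+\ep T_1$. Recomputing the total variation with the new convention regroups $\mathcal R^2$ with $\mathcal R^0$ into the new line and isolates $\mathcal R^1$; using superadditivity of the total variation over $(-\infty,0)$ and $(0,\infty)$ together with the bound $v\le V_{\max}$ on the junction jump, I would obtain
$$
TV\bigl(v(\rho^\ep_X(s'))\bigr)\le TV\bigl(v(\rho^{\ep,2}_X(s'))\bigr)+TV\bigl(v(\rho^{\ep,1}_X(s')+\rho^{\ep,0}_X(s'))\bigr)+C,
$$
that is, the new variation is controlled by the \emph{old} stopped road plus the \emph{old} line, up to a constant absorbing the junction jump and the single spacing modified at $x=0$ when $\sigma^{i_2}$ changes. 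The old line term is $\le C(1+(\ep T_1)^{-1})$ by the end-of-phase regularization. For the old stopped term I use that $\mathcal R^2$ was the green (line) road during the \emph{previous} green-$2$ phase, so its variation at the beginning of the just-ended green-$1$ phase was $\le C(1+(\ep(T-T_1))^{-1})$ by that phase's end-of-phase regularization, and Proposition \ref{prop.BoundBVRed} adds at most $4V_{\max}$ across the green-$1$ phase. Hence
$$
TV\bigl(v(\rho^\ep_X(s'))\bigr)\le 2C\bigl(1+\ep^{-1}(T_1^{-1}\vee(T-T_1)^{-1})\bigr)+C',
$$
with \emph{no term inherited from phases further back}. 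This is the crucial point: the regularization effect erases the history at the end of every phase, so the $4V_{\max}$ shocks produced at the $\sim\ep^{-1}T^{-1}$ switches never accumulate.

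Finally I would assemble the estimate. On the interior of any phase the total variation is at most its value at the phase start (line part non-increasing, stopped part growing by at most $4V_{\max}$), and the previous step bounds that starting value by $2C(1+\ep^{-1}(T_1^{-1}\vee(T-T_1)^{-1}))+C'$ uniformly in the phase. The only place where the regularization of a preceding phase is unavailable is the first period, i.e. the first green-$1$ phase $(0,\ep T_1)$ and the first switch; there I would use the initial bound $TV(v(\rho^\ep_X(0)))$ from \eqref{BVt=0} directly, which is precisely why $K_0$ is allowed to depend on it. Taking $K_0$ to be the maximum of the resulting constants yields the claim for all $t\in[0,1]$. I expect the main obstacle to be the switch-time bookkeeping of the third paragraph: one must verify that regrouping the three roads into the new line and the new stopped road really bounds the new variation by a \emph{bounded multiple} of $C(1+\ep^{-1}(T_1^{-1}\vee(T-T_1)^{-1}))$ plus constants, so that the per-switch $4V_{\max}$ increments are reset by the regularization and cannot pile up over the many light changes.
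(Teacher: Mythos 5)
Your proposal is correct and follows essentially the same strategy as the paper's proof: an induction over the traffic-light phases in which the time-decay part $C(1+t^{-1})$ of Proposition \ref{prop.DiFR} resets the total variation of the unconstrained line at the end of each green phase, Proposition \ref{prop.BoundBVRed} controls the stopped road with a $+4V_{\max}$ increment, and $O(V_{\max})$ bookkeeping absorbs the junction jumps at each switch, so that the per-switch increments never accumulate. Your explicit remark that the regularization constant depends only on the (uniformly bounded) support diameter is a detail the paper leaves implicit, but otherwise the two arguments coincide.
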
 

Note that the estimate degrades as $\ep\to 0$, because $\ep^{-1}(T_1^{-1}\vee (T-T_1)^{-1})$ tends to $\infty$, even under assumption \eqref{hyp.T}. 
 
 \begin{proof} We explain  the mechanism in detail at a change of traffic light at (physical) time $nT$ (with $n\geq 1$) and prove the estimate up to time $(nT+T_1)$. The case of the interval $[nT+T_1, (n+1)T)$ is symmetric.  Finally, we note that the case of the interval $[0,T_1]$ is just a propagation of the initial condition. 
 
 During the time interval $[(n-1)T+T_1, nT)$, the traffic flow on the road $\mathcal R^2\cup\mathcal R^0$ is unconstrained and we can apply the BV estimate  in Proposition \ref{prop.DiFR} to ensure that 
 \be\label{iqkushlkdjnf}
 TV(v(\rho^{\ep,2}_X(\ep^{-1} nT^-, \cdot)+ \rho^{\ep,0}_X(\ep^{-1} nT^-, \cdot))) \leq \bar C(1+ \ep^{-1}(T-T_1)^{-1}), 
 \ee
 where $\bar C$ depends on $V$ only (if $n=0$, $\bar C$ depends also on the BV assumption \eqref{BVt=0} on the initial condition).  Let $i_0$, $i_1$ and $i_2$ be the respective indices of the vehicle closest to the junction $x=0$ on road $\mathcal R^0$, $\mathcal R^1$ and $\mathcal R^2$  at time $nT$: 
 $$
 i_0=\text{argmin}_j \{ X^j(nT), \; r_j(nT)=0\}, \qquad i_k = \text{argmax}_j \{X^j(nT), \; r_j(nT)=k\}, \; k=1,2.
 $$
 As the position of the vehicles are distinct, if $i_0$, $i_1$ and $i_2$ exist, they are distinct. Note that in this case $\sigma^{i_1}(nT)= \sigma^{i_2}(nT)= i_0$. 
 Finally, let $j_1$ and $j_2$ be  the labels of the vehicle just behind $i_1$ and $i_2$ respectively (if such vehicle exist): $\sigma^{j_k}(nT)= i_k$ for $k=1,2$.  We assume here to fix the ideas that $i_0$, $i_1$, $i_2$, $j_1$ and $j_2$ exist. The case were one of them is not defined can be treated in a similar and simpler way. We finally assume that 
 \be\label{ouehizjrfdv}
 TV(v(\rho^{\ep,1}_X((\ep^{-1} nT)^-, \cdot){\bf 1}_{(-\infty, x^{i_1}(\ep^{-1} nT))})) \leq Q, 
 \ee
where $Q$ is defined below, and investigate what happens on the time interval $[nT, nT+T_1)$. Condition \eqref{ouehizjrfdv} is our induction condition.

We have
\begin{align*}
 TV(v(\rho^{\ep,2}_X(\ep^{-1} nT, \cdot){\bf 1}_{(-\infty, x^{i_2}(\ep^{-1} nT))})) & = TV(v(\rho^{\ep,2}_X(\ep^{-1} nT, \cdot)), (-\infty,x^{i_2}(\ep^{-1} nT))) +V(X^{i_2}(nT)-X^{j_2}(nT)) \\
&   \leq \bar C(1+ \ep^{-1}(T-T_1)^{-1}) +V_{\max}, 
\end{align*}
where we used \eqref{iqkushlkdjnf} to estimate $TV(\rho^{\ep,2}_X((\ep^{-1} nT)^-, \cdot), (-\infty, x^{i_2}(\ep^{-1} nT)))$. 
  By Proposition \ref{prop.BoundBVRed}, we obtain therefore
$$
TV(v(\rho^{\ep,2}_X(t, \cdot){\bf 1}_{(-\infty, x^{i_2}(\ep^{-1} t))} )) \leq \bar C(1+ \ep^{-1}(T-T_1)^{-1}) +5V_{\max}=: Q \qquad \forall t\in [nT, nT+T_1). 
$$
where we set $Q= \bar C(1+ \ep^{-1}(T_1^{-1}\vee (T-T_1)^{-1})) +5V_{\max} $.
On the other hand, 
 \begin{align*}
& TV (v(\rho^{\ep,1}_X(\ep^{-1}nT,\cdot)+ \rho^{\ep,0}_X(\ep^{-1}nT,\cdot))) \\
& = 
TV(v((\rho^{\ep,1}_X(\ep^{-1}nT,\cdot){\bf 1}_{(-\infty, x^{i_1}(\ep^{-1}nT)}) +TV(v(\rho^{\ep,0}_X(\ep^{-1}nT,\cdot)), (x^{i_0}(\ep^{-1},\infty)) + 2V_{\max}.
\end{align*} 
Thus, using \eqref{ouehizjrfdv} and \eqref{iqkushlkdjnf}, we obtain
 \begin{align*}
& TV (v(\rho^{\ep,1}_X(\ep^{-1}nT,\cdot)+ \rho^{\ep,0}_X(\ep^{-1}nT,\cdot))) \leq  Q + \bar C(1+ \ep^{-1}(T-T_1)^{-1}) +2V_{\max}. 
\end{align*} 
By Proposition \ref{prop.DiFR} we infer that, for any  $t\in [nT, nT+T_1)$, 
\begin{align*}
& TV (v(\rho^{\ep,1}_X(\ep^{-1}t,\cdot)+ \rho^{\ep,0}_X(\ep^{-1}t,\cdot))) \leq TV (v(\rho^{\ep,1}(\ep^{-1}nT,\cdot)+ \rho^{\ep,0}_X(\ep^{-1}nT,\cdot))\\
& \qquad  \leq 
  Q + \bar C(1+ \ep^{-1}(T-T_1)^{-1}) +2V_{\max}. 
\end{align*} 
We  infer that 
$$
TV(v(\rho^\ep_X(t, \cdot)) \leq K_0(1+ \ep^{-1}(T_1^{-1}\vee (T-T_1)^{-1}))  \qquad \forall t\in [\ep^{-1}nT,\ep^{-1}(nT +T_1)),
$$
for some constant $K_0$ independent of $n$. This inequality extends to the time interval $ [\ep^{-1}(nT +T_1), \ep^{-1}(n+1)T)$ by using symmetric argument. We can then conclude by induction. 
 \end{proof}

 \subsection{An approximate Kato's inequality}
 
We consider the solution $\rho^\ep_X$ defined by \eqref{def.rho} and a solution $\rho_Y$ to the following mesoscopic problem: 
\be\label{eq.mesobis}
\begin{array}{lllll}
(i)&\rho^j_Y\in [0,\rho_{\max}] &\qquad \text{ a.e. on}\; &(0,\infty)\times \mathcal R^j, & j=0,1,2\\
(ii)& \partial_t \rho^j_Y +\partial_x (f^j(\rho^j_Y))= 0 &\qquad \text{ on}\; &(0,\infty)\times \mathcal R^j, & j=0,1,2 \\
(iii) & (\rho^0_Y(t,0^+), \rho^1_Y(t,0^-), \rho^2_Y(t,0^-))\in \mathcal G^\ep(t) &\qquad \text{ for a.e.}\;  &t\in (0,\infty),&
\end{array}
\ee
where the maximal germ $\mathcal G^\ep$ is periodic in time, of period $\ep T$, and such that 
$$
{\mathcal G}^\ep(t):=\left\{\begin{array}{llll}
 {\mathcal G}^1 &\quad \mbox{on}\quad [0, \ep T_1]\; \mbox{\rm (mod. $\ep T$)}\\
 {\mathcal G}^2 &\quad \mbox{on}\quad [\ep T_1, \ep T]\; \mbox{\rm (mod. $\ep T$)}\\
\end{array}\right.
$$
with 
$$
{\mathcal G}^1= \{ (p^0,p^1,p^2)\in Q, \;   f^2(p^2)=0,\quad \min\left\{ f^{1,+}(p^1), f^{0,-}(p^0)\right\}= f^1(p^1)= f^0(p^0)\},$$
$${\mathcal G}^2= \{ (p^0,p^1,p^2)\in Q, \;   f^1(p^1)=0,\quad \min\left\{f^{2,+}(p^2), f^{0,-}(p^0)\right\}= f^2(p^2)= f^0(p^0)\}.
$$
This means that, on the time intervals $(0, \ep T_1)$ (mod. $\ep T$), $\rho^1_Y+\rho^0_Y$ solves the unconstrained equation 
\be\label{eq.unconst}
\partial_t  \rho +\partial_x (f^j(  \rho))= 0 \qquad x\in \R, 
\ee
while $\rho^2_Y$ solves the boundary value problem 
\be\label{eq.bvpb}
\begin{array}{l}
\rho(t,x) \in [0,\rho_{\max}]  \qquad x\leq 0\\
\partial_t \rho+\partial_x (f(\rho))= 0 \qquad x\leq 0\\ 
f(\rho(t,0^-))= 0 .
\end{array} 
 \ee
 On time intervals of the form $( \ep T_1,\ep T)$ (mod. $\ep T$), it is the opposite: $\rho^2_Y+\rho^0_Y$ solves the unconstrained equation \eqref{eq.unconst} while $\rho_Y^1$ solves the boundary value problem \eqref{eq.bvpb}. 
 
As above we fix $\delta:\R\to \R_+$ a smooth function, with support in $[-1,1]$ and integral $1$: $\int_\R \delta =1$, and set $\delta_h(x)= h\delta (hx)$ for $h>0$ large.

 \begin{Proposition}\label{prop.globalesti} For any $\psi\in C_c^\infty((0,1)\times \R, \R_+)$ and $h\geq 1$, we have
 \begin{align}\label{kj,hbn,lkjzsd}
&\sum_{k=0}^2  \iint\int_{0}^{1} \Bigl\{  | \rho^{\ep,k}_X(t,x)- \rho_Y^k(t,y)|  \psi_t(t, \frac{x+y}{2}) \delta_h(\frac{y-x}{2}) \notag\\ 
& \qquad + {\rm sgn}(  \rho^{\ep,k}_X(t,x)- \rho_Y^k(t,y)) (f( (t,x))-f( \rho_Y^k(t,y)))\psi_x(t, \frac{x+y}{2}) \delta_h(\frac{y-x}{2})
\Bigr\}\; dtdxdy  \notag \\ 
& \geq  \sum_{k=0}^2  \iint   | \rho^{\ep,k}_X(0,x)-\rho^k_Y(0,y)|  \psi(0, \frac{x+y}{2}) \delta_h(\frac{y-x}{2})dxdy  \\ 
& - C\ep K  \int_{0}^{1} (h^2 \|\psi(t, \cdot)\|_\infty+ h\|\psi_x(t, \cdot)\|_\infty)dt -C (h^{-1}+\ep) (n^\ep \|\psi\|_\infty + \int_0^{1} (\|\psi_t(t,\cdot)\|_\infty+
\|\psi_x(t,\cdot)\|_\infty) dt) \notag, 
\end{align}
where $n^\ep= [\ep^{-1}/T]$ and 
$$
K= \sup_ t TV(\rho^\ep_X(t,\cdot)).
$$
 \end{Proposition}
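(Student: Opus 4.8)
The plan is to freeze the traffic light on each elementary time interval on which it does not change, apply there the one-branch Kato inequalities of Corollary~\ref{cor.estiL1} and Proposition~\ref{prop.rateFeu}, and then sum the resulting inequalities over all these intervals in $[0,1]$. Writing the scaled period as $\ep T$, I decompose $[0,1]$ into the successive intervals $I_n^1=[\ep nT,\ep(nT+T_1)]$ (green for Road~$1$) and $I_n^2=[\ep(nT+T_1),\ep(n+1)T]$ (green for Road~$2$), for $n=0,\dots,n^\ep$. The structural observation driving the proof is that on each $I_n^1$ the discrete dynamics decouples exactly as the germ $\mathcal G^1$ prescribes for $\rho_Y$: the vehicles on $\mathcal R^1\cup\mathcal R^0$ form a single unconstrained follow-the-leader chain, the right-most Road-$1$ vehicle having the left-most Road-$0$ vehicle as its leader, so that $\rho^{\ep,1}_X+\rho^{\ep,0}_X$ is a line density of the form \eqref{def.rhoXDiFR}, while $\rho^{\ep,2}_X$ obeys a follow-the-leader model stopped by a red light at $x=0$. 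Correspondingly, on $I_n^1$ the map $\rho^1_Y+\rho^0_Y$ is an entropy solution of the unconstrained law \eqref{eq.unconst} on $\R$, and $\rho^2_Y$ solves the boundary value problem \eqref{eq.bvpb}; the roles of Roads~$1$ and~$2$ are exchanged on $I_n^2$.

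On each $I_n^1$ I would then apply Corollary~\ref{cor.estiL1} to the pair $(\rho^{\ep,1}_X+\rho^{\ep,0}_X,\ \rho^1_Y+\rho^0_Y)$ on the glued line $\mathcal R^1\cup\mathcal R^0\simeq\R$, and Proposition~\ref{prop.rateFeu} to the pair $(\rho^{\ep,2}_X,\rho^2_Y)$ on the stopped road $\mathcal R^2$ (and symmetrically on $I_n^2$). Since $\rho^{\ep,1}_X,\rho^1_Y$ are supported in $\{x<0\}$ and $\rho^{\ep,0}_X,\rho^0_Y$ in $\{x>0\}$, while the kernel $\delta_h(\tfrac{y-x}{2})$ localizes the double integrals to $|x-y|\le 2h^{-1}$, the combined-line integral splits into the $k=0$ and $k=1$ contributions of \eqref{kj,hbn,lkjzsd} up to cross terms supported in an $O(h^{-1})$ neighbourhood of the junction: in the space-time flux part these are bounded by $Ch^{-1}\int_{I_n^1}(\|\psi_t\|_\infty+\|\psi_x\|_\infty)\,dt$, and in each trace term by $Ch^{-1}\|\psi\|_\infty$. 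Each elementary interval thus produces the three summands $k=0,1,2$ of the left-hand side of \eqref{kj,hbn,lkjzsd}, together with the Kato error $C\ep K\int_{I_n^1}(h^2\|\psi\|_\infty+h\|\psi_x\|_\infty)\,dt$ and the red-light extension errors $C\int_{I_n^1}(\ep+h^{-1})(\|\psi_t\|_\infty+\|\psi_x\|_\infty)\,dt$ of Proposition~\ref{prop.rateFeu}; here the relevant total variations are controlled by $K=\sup_t TV(\rho^\ep_X(t,\cdot))$ through the BV bound of Proposition~\ref{prop.gobBV}, since the per-branch constants add up to at most $CK$ under the convention of Subsection~\ref{subsec.stoppedtraf}.

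Summing over $n$, the interior space-time integrals add up to the full integral over $[0,1]$, and the integrated error terms telescope into the bounds $C\ep K\int_0^1(h^2\|\psi\|_\infty+h\|\psi_x\|_\infty)\,dt$ and $C(\ep+h^{-1})\int_0^1(\|\psi_t\|_\infty+\|\psi_x\|_\infty)\,dt$ of the statement. The trace terms at the endpoints must telescope as well: each interval inequality carries the trace at its right endpoint on the left and the trace at its left endpoint on the right, so two consecutive intervals cancel provided the traces computed on either side of a switching time agree. This is where the time-continuity of the two solutions enters: $\rho^{\ep,k}_X(\cdot,x)$ is continuous in time except when a vehicle crosses $x=0$, and $\rho_Y$ is continuous in time in $L^1_{\rm loc}$ as an entropy solution of \eqref{eq.mesobis}; hence, up to the $O(\ep\|\psi\|_\infty)$ change caused by a single vehicle possibly crossing the junction at a switch, $\rho^{\ep,k}_X$ and $\rho^k_Y$ take the same values on both sides. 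The only genuine defect is then the regrouping of the branches --- $\{1,0\}+\{2\}$ before a switch against $\{2,0\}+\{1\}$ after it --- which affects only the junction cross terms of the traces, of size $O(h^{-1}\|\psi\|_\infty)$ per switch.

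The main obstacle is precisely this boundary bookkeeping across the $O(n^\ep)$ switching times. One has to verify that the sole defect in the telescoping of the trace terms is the junction-localized regrouping error, and that it accumulates to at most $Cn^\ep(h^{-1}+\ep)\|\psi\|_\infty$ --- the term $C(h^{-1}+\ep)n^\ep\|\psi\|_\infty$ appearing in \eqref{kj,hbn,lkjzsd} --- without ever contaminating the interior Kato error, which remains a single time integral proportional to $\ep K$. This sharp separation between a per-switch error (counted by $n^\ep$) and a genuinely integrated error is ultimately what motivates taking the period as large as $\ep^{-\alpha}$ in the main proof, so as to keep $n^\ep$ small. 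Once the time-continuity of both solutions across switches and the $O(h^{-1}+\ep)$ size of each regrouping mismatch are established, collecting all contributions yields \eqref{kj,hbn,lkjzsd}.
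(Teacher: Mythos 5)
Your proposal follows essentially the same route as the paper's proof: the same decomposition of $[0,1]$ into traffic-light intervals, the same application of Corollary \ref{cor.estiL1} to the glued line $\mathcal R^1\cup\mathcal R^0$ and of Proposition \ref{prop.rateFeu} to the stopped road, the same $O(h^{-1}+\ep)$ junction cross-term bookkeeping for splitting the combined density into per-branch terms, and the same per-switch accounting (vehicle crossing at a switch costs $O(\ep)$ in $L^1$) that produces the $n^\ep(h^{-1}+\ep)\|\psi\|_\infty$ error. The one detail you gloss over is that $\rho^{\ep,2}_X$ is not literally of the stopped form \eqref{def.rhoXStopped}---the block of its right-most vehicle, $y^{\ep,i_2}(t){\bf 1}_{[x^{\ep,i_2}(t),x^{\ep,i_0}(t))}$, is computed with respect to the leader on Road 0 and extends past $x=0$---so the paper first replaces $\rho^{\ep,2}_X$ by the corrected density $\tilde\rho^2_X$ (dropping that block) before invoking Proposition \ref{prop.rateFeu}, and then undoes the replacement at an $O(\ep)$ cost per term, an adjustment that stays entirely within the error budget you already track.
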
 
 
 \begin{proof} We argue by induction on the time intervals $[\ep nT, \ep (n+1)T]$. To simplify the notation, we prove the result in the case $n=0$ (but without using the regularity of the initial condition). On $[0, T_1]$,  $\tilde \rho_Y:=\rho^1_Y+\rho^0_Y$ is a solution to \eqref{eq.unconst}, while $\tilde \rho^\ep_X:=\rho^{\ep,1}_X+\rho^{\ep,0}_X$ is of the form \eqref{def.rhoXDiFR} for an unconstrained solution to the follow-the-leader model. Hence, following Corollay \ref{cor.estiL1}, we have:  
 \begin{align} \label{lqkazjesnrdfg}
&\iint \Bigl\{  |\tilde \rho^\ep_X(\ep T_1^-,x)-\tilde \rho_Y(\ep T_1^-,y)|  \psi(\ep T_1, \frac{x+y}{2}) \delta_h(\frac{y-x}{2}) \Bigr\}\; dxdy \notag\\ 
&+ \iint\int_{0}^{\ep T_1} \Bigl\{  |\tilde \rho^\ep_X(t,x)-\tilde \rho_Y(t,y)|  \psi_t(t, \frac{x+y}{2}) \delta_h(\frac{y-x}{2}) \notag\\ 
& \qquad + {\rm sgn}(\tilde \rho^\ep_X(t,x)-\tilde \rho_Y(t,y)) (f(\tilde \rho^\ep_X(t,x))-f(\tilde \rho_Y(t,y)))\psi_x(t, \frac{x+y}{2}) \delta_h(\frac{y-x}{2})
\Bigr\}\; dtdxdy \notag \\ 
& \geq \iint \Bigl\{  |\tilde \rho^\ep_X(0,x)-\tilde \rho_Y(0,y)|  \psi(0, \frac{x+y}{2}) \delta_h(\frac{y-x}{2}) \Bigr\}\; dxdy
\\ 
& 
- C\ep \tilde K_0  \int_{0}^{\ep T_1} (h^2 \|\psi(t, \cdot)\|_\infty+ h\|\psi_x(t, \cdot)\|_\infty)dt , \notag
\end{align}
where 
$$
\tilde K_0= \sup_{t\in [0, \ep T_1)} TV(v(\tilde \rho^\ep_X(t))) = \sup_{t\in [0, \ep T_1)} TV(v(\rho^{\ep,1}_X(t)+\rho^{\ep,0}_X(t))).
$$
We need to write this expression in terms of $\rho^{\ep,k}_X$ and $\rho^{k}_Y$, $k=0,1$. For this, let us fix a time $t$ and let us denote by $i_0$, $i_1$ and $i_2$  the respective indices of the vehicle closest to the junction $x=0$ on road $\mathcal R^0$, $\mathcal R^1$ and $\mathcal R^2$  at time $\ep^{-1}t$: 
 $$
 i_0=\text{argmin}_j \{ X^j(\ep^{-1}t), \; r_j(\ep^{-1}t)=0\}, \qquad i_k = \text{argmax}_j \{X^j(\ep^{-1}t), \; r_j(\ep^{-1}t)=k\}, \; k=1,2.
 $$
We suppose that these indices exists, the case where one of them does not can be handled in a similar and simpler way. 
Let us consider a generic continuous map $g: \R^4\to \R$, with compact support in the first two variables. Typically, we will take below $g(x,y, a,b)= |b-a|\psi(\ep T_1, \frac{x+y}{2})$, or $g(x,y, a,b)= |b-a|\psi_t(t, \frac{x+y}{2})$, or 
$g(x,y,a,b)=  {\rm sgn}(b-a) (f(b)-f(a))\psi_x(t, \frac{x+y}{2})$,... We assume that
\be\label{hyp.gggg}
g(x,y,0,0)= 0\qquad \text{and}\qquad   |g(x,y,a,b)-g(x,y,a',b')|\leq C_g (|a'-a|+|b'-b|), 
\ee
for some constant $C_g$. We note that this is the case in the examples given above. Then, as $\tilde \rho^\ep_X=  \rho^{\ep,0}_X$ on $\{t\}\times[x^{\ep,i_0}, \infty)$ and $\tilde \rho^\ep_X=  \rho^{\ep,1}_X$ on $\{t\}\times(-\infty,x^{\ep,i_0})$ while 
 $\tilde \rho_Y= \rho_Y^0$ on $[0, \infty)$ and $\tilde \rho_Y= \rho_Y^1$ on $\{t\}\times(-\infty,0))$, we have 
\begin{align*}
& \iint g(x,y, \tilde \rho^\ep_X(t,x), \tilde \rho_Y(t,y))\delta_h(\frac{y-x}{2})  dxdy = 
 \iint g(x,y,  \rho^0_X(t,x),  \rho^0_Y(t,y))\delta_h(\frac{y-x}{2})  dxdy \\
 &\qquad \qquad  + \iint g(x,y,  \rho^1_X(t,x),  \rho^1_Y(t,y))\delta_h(\frac{y-x}{2})  dxdy + R^1+R^2
\end{align*}
where 
$$
R^1 = \int_{-\infty}^{x^{\ep,i_0}(t)} \int_0^\infty \Bigl( g(x,y,  \rho^1_X(t,x),  \rho^0_Y(t,y))  - 
g(x,y,  \rho^0_X(t,x),  \rho^0_Y(t,y))  - g(x,y,  \rho^1_X(t,x),  \rho^1_Y(t,y))   \Bigr)\delta_h(\frac{y-x}{2}) dxdy
$$
and 
$$
R^2 = \int_{x^{\ep,i_0}(t)}^\infty \int_{-\infty}^0 \Bigl( g(x,y,  \rho^0_X(t,x),  \rho^1_Y(t,y))  - 
g(x,y,  \rho^0_X(t,x),  \rho^0_Y(t,y))  - g(x,y,  \rho^1_X(t,x),  \rho^1_Y(t,y))   \Bigr)\delta_h(\frac{y-x}{2}) dxdy. 
$$
To estimate $R^1$, we recall that $\rho^0_X$ vanishes on $(-\infty, x^{\ep,i_0}(t))$ while $\rho^1_Y$ vanishes on $(0,\infty)$. Thus, by \eqref{hyp.gggg}, we obtain 
\begin{align*}
|R^1| &\leq  \int_{-\infty}^{x^{\ep,i_0}(t)} \int_0^\infty \Bigl( |g(x,y,  \rho^1_X(t,x),  \rho^0_Y(t,y))  - 
g(x,y,  0,  \rho^0_Y(t,y))| +| g(x,y,  \rho^1_X(t,x),  0) -g(x,y, 0,0)|  \Bigr)\delta_h(\frac{y-x}{2}) dxdy \\ 
& \leq  2 C_g \int_{-\infty}^{x^{\ep,i_0}(t)} \int_0^\infty |\rho^1_X(t,x)| \delta_h(\frac{y-x}{2}) dxdy \; =\;  2 C_g \int_{-2h^{-1}}^{x^{\ep,i_0}(t)}  |\rho^1_X(t,x)|dx , 
\end{align*}
where we used in the last inequality the fact that $\delta_h$ has a support in $[-h^{-1}, h^{-1}]$. By Lemma \ref{lem.x+1-xpetit} (see also the proof of Proposition \ref{prop.x+1-xpetit}) we have $\rho^1_X\leq \rho_{\max}$ in $(-\infty, x^{\ep,1}(t))$ and $\rho^1_X= \ep/(x^{\ep,i_0}(t)-x^{\ep, 1}(t))$ on $(x^{\ep, 1}(t),x^{\ep,i_0}(t))$. Thus 
\begin{align*}
\int_{-2h^{-1}}^{x^{\ep,i_0}(t)}  |\rho^1_X(t,x)|dx  &= \int_{-2h^{-1}}^{ (-2h^{-1})\vee x^{\ep,1}(t)}  |\rho^1_X(t,x)|dx +\int_{(-2h^{-1})\vee x^{\ep,1}(t) }^{x^{\ep,i_0}(t)}  |\rho^1_X(t,x)|dx \\ 
 & \leq  2h^{-1} \rho_{\max} +\frac{ \ep}{x^{\ep,i_0}(t)-x^{\ep, 1}(t)} (x^{\ep,i_0}(t)-x^{\ep, 1}(t)) \leq C(h^{-1}+\ep). 
\end{align*}
For $R^2$ we recall that $\rho^1_X$ vanishes on $(x^{\ep,i_0}(t),\infty)$ while $\rho^0_Y$ vanishes on $(-\infty,0)$ and we obtain 
\begin{align*}
R^2 & \leq  \int_{x^{\ep,i_0}(t)}^\infty \int_{-\infty}^0 \Bigl( | g(x,y,  \rho^0_X(t,x),  \rho^1_Y(t,y))  -  g(x,y,  0,  \rho^1_Y(t,y))|+
|g(x,y,  \rho^0_X(t,x), 0)- g(x,y, 0, 0)|    \Bigr)\delta_h(\frac{y-x}{2}) dxdy \\ 
& \leq 2C_g \int_{x^{\ep,i_0}(t)}^\infty \int_{-\infty}^0 \rho^0_X(t,x)\delta_h(\frac{y-x}{2}) dxdy   \leq 2C_g \int_{x^{\ep,i_0}(t)}^{2h^{-1}}  \rho^0_X(t,x)dx \leq 
4h^{-1} C_g\rho_{\max} , 
\end{align*} 
since $\rho^0_X$ is bounded by $\rho_{\max}$ in $(x^{\ep, 0}(t), \infty)$ (see again the proof of Proposition \ref{prop.x+1-xpetit}). This proves therefore that
\begin{align*}
& \left| \iint g(x,y, \tilde \rho^\ep_X(t,x), \tilde \rho_Y(t,y))\delta_h(\frac{y-x}{2})  dxdy- \sum_{k=0,1}  \iint g(x,y,  \rho^k_X(t,x),  \rho^k_Y(t,y))\delta_h(\frac{y-x}{2})  dxdy \right| \leq CC_g (h^{-1}+\ep).
\end{align*}
Applying this estimate to the various terms in \eqref{lqkazjesnrdfg}, we get therefore 
 \begin{align*} 
&\sum_{k=0,1} 
\iint   |  \rho^{\ep,k}_X(\ep T_1^-,x)- \rho_Y^k(\ep T_1,y)|  \psi(\ep T_1, \frac{x+y}{2}) \delta_h(\frac{y-x}{2}) \; dxdy \notag\\ 
&+\sum_{k=0,1}  \iint\int_{0}^{\ep T_1} \Bigl\{  |  \rho^{\ep,k}_X(t,x)- \rho_Y^k(t,y)|  \psi_t(t, \frac{x+y}{2}) \delta_h(\frac{y-x}{2}) \notag\\ 
& \qquad + {\rm sgn}(  \rho^{\ep,k}_X(t,x)- \rho_Y^k(t,y)) (f(  \rho^{\ep,k}_X(t,x))-f( \rho_Y^k(t,y)))\psi_x(t, \frac{x+y}{2}) \delta_h(\frac{y-x}{2})
\Bigr\}\; dtdxdy \notag \\ 
& \geq  \sum_{k=0,1}
 \iint   |  \rho^{\ep,k}_X(0,x)- \rho_Y^k(0,y)|  \psi(0, \frac{x+y}{2}) \delta_h(\frac{y-x}{2}) \; dxdy \\ 
& 
- C\ep \tilde K_0  \int_{0}^{\ep T_1} (h^2 \|\psi(t, \cdot)\|_\infty+ h\|\psi_x(t, \cdot)\|_\infty)dt -C (h^{-1}+\ep) (\|\psi\|_\infty + \int_0^{\ep T_1} (\|\psi_t(t,\cdot)\|_\infty+
\|\psi_x(t,\cdot)\|_\infty) dt) . 
\end{align*}
 
 We now turn to the stopped traffic on Road 2. Note that the index $i_2$ defined above remains unchanged on the time interval $(0, \ep T_1)$. Moreover,  
 $\rho^2_Y$ solves \eqref{eq.bvpb} while, if we set 
$$
\tilde \rho^2_X(t,x)=  \sum_{x^{\ep,i}(t)<0, \; r^i(t) =2, \; i\neq i_2} y^{\ep, i}(t)  {\bf 1}_{[x^i(t), x^{\ep,\sigma^i(\ep^{-1}t)}(t))},
$$
then $\tilde \rho^2_X$ corresponds to the density \eqref{def.rhoXStopped} of a stopped traffic as in Subsection \ref{subsec.stoppedtraf}.  We note for later use that 
\be\label{iakzjs;dkfg}
\rho^2_X(t,x)-\tilde \rho^2_X(t,x)= y^{\ep, i_2}(t)  {\bf 1}_{[x^{\ep, i_2}(t), x^{\ep,i_0}(t))}.
\ee
By Proposition \ref{prop.rateFeu}, we have 
 \begin{align}\label{kjaenzrekl}
&\iint \Bigl\{  |\tilde \rho^2_X(\ep T_1^-,x)-\rho^2_Y(\ep T_1,y)|  \psi(\ep T_1, \frac{x+y}{2}) \delta_h(\frac{y-x}{2})\Bigr\}dxdy \notag \\ 
&+ \iint \int_{0}^{\ep T_1}  \Bigl\{  |\tilde \rho^2_X(t,x)-\rho^2_Y(t,y)|  \psi_t(t, \frac{x+y}{2}) \delta_h(\frac{y-x}{2}) \notag \\ 
& \qquad + {\rm sgn}(\tilde \rho^2_X(t,x)-\rho^2_Y(t,y)) (f(\tilde \rho^2_X(t,x))-f(\rho^2_Y(t,y))) \psi_x(t, \frac{x+y}{2}) \delta_h(\frac{y-x}{2})
\Bigr\}\; dtdxdy \notag \\ 
& \geq \iint \Bigl\{  |\tilde \rho^2_X(0,x)-\rho^2_Y(0,y)|  \psi(0, \frac{x+y}{2}) \delta_h(\frac{y-x}{2})\Bigr\}dxdy  \\ 
& \qquad - C \int_{0}^{\ep T_1} (\ep \tilde K_2 h^2 \|\psi(t, \cdot)\|_\infty+ (\ep \tilde K_2 h+h^{-1})\|\psi_x(t, \cdot)\|_\infty +
 (\ep +h^{-1})\|\psi_t(t, \cdot)\|_\infty  )dt, \notag
\end{align}
where 
$$
\tilde K_2= \sup_{t\in [0, \ep T_1)} TV(\tilde  \rho^{\ep,2}_X(t))= \sup_{t\in [0, \ep T_1)} TV(  \rho^{\ep,2}_X(t){\bf 1}_{(\infty, x^{\ep, 2}(t))}). 
$$
We now need to replace $\tilde \rho^2_X$ by $\rho^2_X$.
In order to handle the different terms in the expression above, we introduce as before a generic continuous map $g: \R^4\to \R$, with compact support in the first two variables and assume that \eqref{hyp.gggg} holds. Then, by \eqref{iakzjs;dkfg}, 
\begin{align*}
& \left| \iint g(x,y,\tilde \rho^2_X(t,x),\rho^2_Y(t,y)) \delta_h(\frac{y-x}{2})  dxdy -\iint g(x,y, \rho^2_X(t,x),\rho^2_Y(t,y)) \delta_h(\frac{y-x}{2}) dxdy\right| \\
& \leq C_g \iint y^{\ep, i_2}(t)  {\bf 1}_{[x^{\ep, i_2}(t), x^{\ep,i_0}(t))} \delta_h(\frac{y-x}{2})dxdy = C_g \ep. 
\end{align*}
Applying this inequality to the various terms in \eqref{kjaenzrekl} we find therefore 
\begin{align*}
&\iint \Bigl\{  | \rho^2_X(\ep T_1^-,x)-\rho^2_Y(\ep T_1,y)|  \psi(\ep T_1, \frac{x+y}{2}) \delta_h(\frac{y-x}{2})\Bigr\}dxdy \notag \\ 
&+ \iint \int_{0}^{\ep T_1}  \Bigl\{  | \rho^2_X(t,x)-\rho^2_Y(t,y)|  \psi_t(t, \frac{x+y}{2}) \delta_h(\frac{y-x}{2}) \notag \\ 
& \qquad + {\rm sgn}( \rho^2_X(t,x)-\rho^2_Y(t,y)) (f( \rho^2_X(t,x))-f(\rho^2_Y(t,y))) \psi_x(t, \frac{x+y}{2}) \delta_h(\frac{y-x}{2})
\Bigr\}\; dtdxdy \notag \\ 
& \geq \iint \Bigl\{  | \rho^2_X(0^+,x)-\rho^2_Y(0,y)|  \psi(0, \frac{x+y}{2}) \delta_h(\frac{y-x}{2})\Bigr\}dxdy  \\ 
& \qquad - C\ep \|\psi\|_\infty- C \int_{0}^{\ep T_1} (\ep \tilde K_2 h^2 \|\psi(t, \cdot)\|_\infty+ (\ep \tilde K_2h+h^{-1}+\ep)\|\psi_x(t, \cdot)\|_\infty +
 (\ep +h^{-1})\|\psi_t(t, \cdot)\|_\infty  )dt. \notag
\end{align*}
Putting the estimates on all the lines together gives: 
 \begin{align} \label{kjaenzreklBIS}
&\sum_{k=0}^2 
\iint   |  \rho^{\ep,k}_X(\ep T_1^-,x)- \rho_Y^k(\ep T_1,y)|  \psi(\ep T_1, \frac{x+y}{2}) \delta_h(\frac{y-x}{2}) \; dxdy \notag\\ 
&+\sum_{k=0}^2  \iint\int_{0}^{\ep T_1} \Bigl\{  |  \rho^{\ep,k}_X(t,x)- \rho_Y^k(t,y)|  \psi_t(t, \frac{x+y}{2}) \delta_h(\frac{y-x}{2}) \notag\\ 
& \qquad + {\rm sgn}(  \rho^{\ep,k}_X(t,x)- \rho_Y^k(t,y)) (f(  \rho^{\ep,k}_X(t,x))-f( \rho_Y^k(t,y)))\psi_x(t, \frac{x+y}{2}) \delta_h(\frac{y-x}{2})
\Bigr\}\; dtdxdy \notag \\ 
& \geq  \sum_{k=0}^2
 \iint   |  \rho^{\ep,k}_X(0^+,x)- \rho_Y^k(0,y)|  \psi(0, \frac{x+y}{2}) \delta_h(\frac{y-x}{2}) \; dxdy \\ 
& 
- C\ep K  \int_{0}^{\ep T_1} (h^2 \|\psi(t, \cdot)\|_\infty+ h\|\psi_x(t, \cdot)\|_\infty)dt -C (h^{-1}+\ep) (\|\psi\|_\infty + \int_0^{\ep T_1} (\|\psi_t(t,\cdot)\|_\infty+
\|\psi_x(t,\cdot)\|_\infty) dt) \notag
\end{align}
where $K$ is defined as in the Proposition.  
We finally need to replace $\ep T_1^-$ by $\ep T_1^+$ in the left-hand side of the inequality above, since the $ \rho^{\ep,k}_X$ are discontinuous in time in general. Such a discontinuity can happen only if one of the vehicles reaches the position $x=0$ at time $t= \ep T_1$. We note  that  this can only  be a vehicle on road 1 which reaches $0$, i.e., $x^{\ep, i_1}(\ep T_1)=0$. Thus, in this case,   
$$
\rho^{\ep,0}_X(\ep T_1^+,x)= \rho^{\ep,0}_X(\ep T_1^-,x)+ \frac{\ep}{ x^{\ep, i_0}(\ep T_1)-x^{\ep, i_1}(\ep T_1)}{\bf 1}_{[ x^{\ep, i_1}(\ep T_1),x^{\ep, i_0}(\ep T_1) )}
$$
$$
\rho^{\ep,1}_X(\ep T_1^+,x)= \rho^{\ep,1}_X(\ep T_1^-,x)- \frac{\ep}{x^{\ep, i_0}(\ep T_1)-x^{\ep, i_1}(\ep T_1)} {\bf 1}_{[ x^{\ep, i_1}(\ep T_1),x^{\ep, i_0}(\ep T_1) )},
$$
\begin{align*}
\rho^{\ep,2}_X(\ep T_1^+,x) & = \rho^{\ep,2}_X(\ep T_1^-,x)- \frac{\ep}{x^{\ep, i_0}(\ep T_1)-x^{\ep, i_2}(\ep T_1)}{\bf 1}_{[ x^{\ep, i_2}(\ep T_1), x^{\ep, i_0}(\ep T_1))}\\ 
& + \frac{\ep}{x^{\ep, i_1}(\ep T_1)-x^{\ep, i_2}(\ep T_1)}{\bf 1}_{[ x^{\ep, i_2}(\ep T_1), x^{\ep, i_1}(\ep T_1))} . 
\end{align*}
Therefore, in any case, 
$$
\|\rho^{\ep,k}_X(\ep T_1^+,\cdot)-\rho^{\ep,k}_X(\ep T_1^-,\cdot)\|_{L^1} \leq C\ep. 
$$
Thus we can replace $\ep T_1^-$ by $\ep T_1^+$ in the left-hand side of \eqref{kjaenzreklBIS}, up to an increase of the constant $C$ in the right-hand side. 
We can obtain a similar inequality on the time interval $(\ep T_1, \ep T)$ and then complete the proof by induction on the time intervals $(n\ep T, (n+1)\ep T)$: the number $n^\ep$ in \eqref{kj,hbn,lkjzsd} is the number of switches of the traffic light on the scaled  time interval $[0,1]$.  
\end{proof}

%
%

\subsection{Correctors for the mesoscopic model} 

The convergence of the microscopic model to the continuous one is based on a  mesoscopic model \cite{CFM24}, which is  a traffic flow model with a periodic traffic light and in which the flow of vehicles is given by a continuous model of conservation law. In this model, the time-interval $\R$ is split into the $1-$periodic sets $I^1$ and $I^2$, with $I^1=[0,\theta]$ (mod. 1) and $I^1=[\theta,1]$ mod. 1 (with $\theta\in (0,1)$). On the time-intervals $I^1$, only cars coming from Road $1$ are allowed to enter the junction and the Road $0$, while on the time-intervals $I^2$ only cars coming from road $2$ can enter road $0$. There is no flux limiter on the junction. 

One of the main results of \cite{CFM24} is the existence of a time periodic corrector for the corresponding conservation law: 
\be\label{eq.meso}
\begin{array}{lllll}
(i)&\rho^j\in [0,\rho_{\max}] &\qquad \text{ a.e. on}\; &(-\infty,\infty)\times \mathcal R^j, & j=0,1,2\\
(ii)& \partial_t \rho^j +\partial_x (f^j(\rho^j))= 0 &\qquad \text{ on}\; &(-\infty,\infty)\times \mathcal R^j, & j=0,1,2 \\
(iii) & (\rho^0(t,0^+), \rho^1(t,0^-), \rho^2(t,0^-))\in \mathcal G(t) &\qquad \text{ for a.e.}\;  &t\in (-\infty,\infty),&
\end{array}
\ee
The time periodic maximal germ ${\mathcal G}$ of period equal to $1$ is given by
\begin{equation}\label{eq::c21bis}
{\mathcal G}(t):=\left\{\begin{array}{llll}
 {\mathcal G}^1(t) &\quad \mbox{on}\quad I^1\\
 {\mathcal G}^2(t) &\quad \mbox{on}\quad I^2\\
\end{array}\right.
\end{equation}
and 
$$
{\mathcal G}^1(t)= \{ (p^0,p^1,p^2)\in Q, \;   f^2(p^2)=0,\quad \min\left\{ f^{1,+}(p^1), f^{0,-}(p^0)\right\}= f^1(p^1)= f^0(p^0)\},$$
$${\mathcal G}^2(t)= \{ (p^0,p^1,p^2)\in Q, \;   f^1(p^1)=0,\quad \min\left\{f^{2,+}(p^2), f^{0,-}(p^0)\right\}= f^2(p^2)= f^0(p^0)\}.
$$

\begin{Theorem} {\bf (Existence of correctors with prescribed values at infinity) \cite{CFM24}}\label{thm.corrector}\\ 
Assume our standing assumptions {\bf (H)} hold and let $E$ be defined by \eqref{def.E}. For any $p=(p^0,p^1,p^2)\in E$, there exists an entropy solution $u_p=(u_p^k)_{k=0,1,2}\in L^\infty(\R\times \mathcal R)$ of \eqref{eq.meso} which is $1$-periodic in time and a constant $C>0$ such that for all $M\geq C$
\be\label{keypptcorr}
 \|u^0_p-p^0\|_{L^\infty(\R\times (M,\infty))}+ \|u^k_p-p^k\|_{L^\infty(\R\times (-\infty,M))}\leq CM^{-1},\qquad  k=1,2. 
\ee
\end{Theorem}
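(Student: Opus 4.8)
\emph{Proof proposal.} The plan is to realize $u_p$ as a fixed point of the time-$1$ (Poincar\'e) map of the mesoscopic problem \eqref{eq.meso} and then to extract the spatial decay \eqref{keypptcorr} from the genuine nonlinearity of $f$. The departure point is the well-posedness and $L^1$-contraction of \eqref{eq.meso} for the $1$-periodic germ $\mathcal G(t)$ of \eqref{eq::c21bis}, established in \cite{CFM24}: these provide a solution semigroup $(S_t)_{t\ge 0}$ that is order preserving and nonexpansive in $L^1$. I would set $\Phi:=S_1$ and look for initial data $\bar\rho$ with $\Phi(\bar\rho)=\bar\rho$, so that the associated solution is automatically $1$-periodic in time.

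Before constructing the orbit, I would check that the constant state $p=(p^0,p^1,p^2)$ carries the correct time-averaged fluxes. For $p\in\Gamma$ one has $f(p^k)=\theta^k\lambda$, and this is exactly the discharge of the incoming road $k\in\{1,2\}$ averaged over one cycle: during its green fraction $\theta^k$ the road discharges at flux $\lambda$ (the downstream-congested level $f^{0,-}(p^0)=\lambda$ throttles the junction) and during the complementary red phase at flux $0$, while road $0$, fed by whichever incoming road is green, sees the constant flux $\lambda=f(p^0)$. The fully congested state $P_0=(\rho_{\max},\rho_{\max},\rho_{\max})$ and the empty state $P_3=(0,0,0)$ give outright constant correctors, and the same averaged balance is checked directly at $P_1,P_2$; so the construction is genuinely time-dependent only at the interior of $\Gamma$ and at $P_1,P_2$. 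I would then build the fixed point by monotone iteration: starting from ordered data that agree with $p^k$ off a compact set and bracket $p$ from below and above, the comparison principle makes the iterates $\Phi^n$ monotone, and their $L^1_{loc}$ limit is a fixed point $\bar\rho$, hence the periodic corrector $u_p$. Passing to the limit preserves the entropy inequalities and the constraint $\mathrm{trace}\in\mathcal G(t)$ thanks to the maximality of the germ and the stability of entropy solutions under monotone limits.

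The decay \eqref{keypptcorr} is then an a posteriori estimate read branch by branch. On $\mathcal R^k$ the component $u^k_p$ solves a scalar conservation law with uniformly concave (hence genuinely nonlinear) flux and $1$-periodic-in-time data coming from the junction, with time-averaged flux equal to $f(p^k)$. The Oleinik one-sided Lipschitz estimate---the mechanism behind the Glimm--Lax $O(1/\cdot)$ decay of periodic solutions of genuinely nonlinear scalar laws---forces the shocks emitted at successive red phases to merge and the $O(1)$ oscillation created at the junction to collapse, at distance $M$, to an $O(M^{-1})$ deviation from its space-time mean; identifying that mean with $p^k$ through the averaged flux yields $\|u^k_p-p^k\|_{L^\infty(\{|x|>M\})}\le CM^{-1}$. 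For $P_1,P_2$, whose outgoing component sits on the increasing branch, the same estimate is applied on the appropriate monotone branch selected by $f^{\pm}$.

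I expect the quantitative decay of the third paragraph, rather than the existence of the periodic orbit, to be the main obstacle: existence is a soft consequence of contraction and monotonicity, whereas upgrading ``the oscillations damp at infinity'' into the sharp rate $M^{-1}$ needs the full force of the uniform concavity of $f$ (through Oleinik's bound and the bookkeeping of how much mass each road stores and releases over one period). A secondary, more technical difficulty is to verify that the monotone limit is a bona fide entropy solution whose trace lies in $\mathcal G(t)$ for a.e.\ $t$, which is where the maximality of $\mathcal G$ and its generation by $E$ enter.
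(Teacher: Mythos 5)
First, a point of comparison: the paper never proves Theorem \ref{thm.corrector} at all --- it is quoted verbatim from \cite{CFM24} and used as a black box in Section \ref{sec.proofmain} --- so your attempt has to be measured against the construction in that reference and on its own merits. Your heuristic (periodic flux injection at the junction, damping of the oscillation by genuine nonlinearity, bookkeeping via the time-averaged flux) is the right picture, but as a proof the proposal has genuine gaps, and the difficulty sits in the opposite place from where you put it. The existence step is \emph{not} ``a soft consequence of contraction and monotonicity''. Nonexpansive self-maps of bounded convex subsets of $L^1$ can fail to have fixed points ($L^1$ is exactly the space where metric fixed-point theory breaks down; Alspach's example), so the $L^1$-contraction of $S_1$ alone gives nothing. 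The monotone-iteration fallback needs a sub- or supersolution of the period map, i.e.\ data $\rho_0$ with $\Phi(\rho_0)\ge\rho_0$ or $\Phi(\rho_0)\le\rho_0$; data that merely ``bracket $p$ from below and above'' do not produce monotone iterates. And the natural candidate $\rho_0\equiv p$ is neither: for $p\in\Gamma$ the solution started from $p$ oscillates around $p^1$ on road 1 --- the junction trace drops to $(f^-)^{-1}(\lambda)<p^1$ during the green phase and climbs to $\rho_{\max}>p^1$ during the red one --- so $\Phi(p)$ and $p$ are not ordered.

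Second, existence and the decay estimate cannot be decoupled as ``soft existence, a posteriori decay''. By finite speed of propagation the region where $\Phi^n(\rho_0)$ differs from $p$ grows linearly in $n$, so without an estimate that is uniform along the iteration the $L^1_{loc}$ limit has no reason to satisfy \eqref{keypptcorr}; the only soft far-field information, namely that $\int_0^1 f(u(t,x))\,dt$ is independent of $x$ for a time-periodic solution, determines $f$ of the far-field state but not which of the two roots of $f(c)=\mathrm{const}$ it is. Worse, if the rate $M^{-1}$ is attained (and generically it is --- this is precisely the Glimm--Lax shock-strength decay), then $u_p-p$ is not integrable, so the corrector does not even belong to $p+L^1(\mathcal R)$, the space in which your iteration lives: the quantitative estimate must be established on the approximating solutions \emph{before} any limit is taken, and it is the heart of the construction rather than an afterthought. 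Finally, the Oleinik/Glimm--Lax step itself needs more than a citation: here the oscillation is injected from a boundary, not from initial data, so one must exchange the roles of $x$ and $t$, which requires $f'(u)$ bounded away from $0$ on the range of the solution (this degenerates when $\lambda$ is near $\max f$, where the traces touch the sonic point), and the direction of the characteristics has to be checked element by element of $E$: for $p\in\Gamma$ all three states are congested, characteristics on $\mathcal R^0$ point \emph{toward} the junction, and no decay mechanism operates there --- nor is one needed, since in that case the junction feeds road 0 at the constant flux $\lambda$ in both phases and the corrector is identically $p^0$ on $\mathcal R^0$. These branch-by-branch case distinctions, which is where the structure of the generating set $E$ really enters, are what the cited construction in \cite{CFM24} carries out and what your sketch leaves open.
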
 

\subsection{Proof of the convergence} 

We now prove Theorem \ref{thm.main}. Let us start with a compactness result. 

\begin{Lemma} The family $(\rho^\ep_X)$ is relatively compact in $L^1((0,1)\times \mathcal R)$. 
\end{Lemma}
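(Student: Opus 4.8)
The plan is to combine the interior $L^1$-compactness on each branch, furnished by Proposition~\ref{prop.localcompact}, with the fact (Proposition~\ref{prop.x+1-xpetit}) that $\rho^\ep_X$ carries only a uniformly small mass near the junction. First I would record the uniform bounds. Since each $X^i$ is nondecreasing with $\dot X^i\le V_{\max}$, the scaled positions $x^{\ep,i}$ remain, on the scaled interval $[0,1]$, inside a fixed interval $[-C_1,C_1]$, with $C_1$ depending only on $V_{\max}$ and on the support bound $C_0$ in \eqref{BVt=0}; hence $\mathrm{Spt}(\rho^{\ep,k}_X(t,\cdot))\subset[-C_1,C_1]$ for all $t\in[0,1]$ and $k=0,1,2$. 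The total mass is $\int_{\mathcal R}\rho^\ep_X(t,\cdot)=N\ep=O(1)$ by \eqref{Nep=O1}, and Proposition~\ref{prop.x+1-xpetit} gives $\rho^\ep_X\le\rho_{\max}$ outside the $\ep$-neighborhood of the junction together with $\|\rho^{\ep,k}_X\|_{L^1((-\ep\rho_{\max}^{-1},\ep\rho_{\max}^{-1}))}\le 3\ep$ for $k=1,2$. I emphasize that the global $BV$ estimate of Proposition~\ref{prop.gobBV} is \emph{not} usable here, since it degrades as $\ep\to0$; the whole point is to rely instead on the purely local Proposition~\ref{prop.localcompact}.

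Next, given $\eta\in(0,C_1)$, I would split $\mathcal R$ into the interior pieces $\mathcal R^0\cap(\eta,\infty)$, $\mathcal R^1\cap(-\infty,-\eta)$, $\mathcal R^2\cap(-\infty,-\eta)$, and the junction region $\mathcal J_\eta$, the three segments $\{|x|\le\eta\}$. On $\mathcal J_\eta$ the bounds above yield, uniformly in $\ep$,
\[
\|\rho^\ep_X\|_{L^1((0,1)\times\mathcal J_\eta)}\le C(\eta+\ep),
\]
because $\rho^{\ep,0}_X\le\rho_{\max}$ on $(0,\eta)$ and $\rho^{\ep,k}_X\le\rho_{\max}$ on $(-\eta,-\ep\rho_{\max}^{-1})$ for $k=1,2$, the remaining $\ep$-neighborhood contributing at most $3\ep$.

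On each interior piece I would apply Proposition~\ref{prop.localcompact}. Consider for instance $\mathcal R^0\cap(\eta,C_1)$: I would partition $[0,1]$ into finitely many windows of length $\tau<\eta/V_{\max}$, and on a window $[s,s+\tau]$ freeze the vehicles that are on $\mathcal R^0$ at time $s$, indexed $1,\dots,M$ from the back, the front one $X^M$ being the leader, so that $\dot X^M\ge0$. During the window these vehicles keep their order and obey the plain follow-the-leader equations, while the vehicles entering $\mathcal R^0$ at the junction stay in $(0,V_{\max}\tau)\subset(0,\eta)$ and so do not influence $\rho^{\ep,0}_X$ on $(\eta,C_1)$; Proposition~\ref{prop.localcompact} then gives relative compactness of $\rho^{\ep,0}_X$ in $L^1$ on the part of the window-region lying to the left of the leader, and summing over the finitely many windows gives compactness on $(0,1)\times(\eta,C_1)$. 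The incoming roads are treated symmetrically, the leader being the front vehicle on the road, whose velocity is $V(X^{\sigma}-X)$ or, under a red light, $V(-X)$, in both cases $\ge0$. The main obstacle is precisely this per-branch bookkeeping: one must check that the density on the chosen interior region is produced by a genuine finite follow-the-leader chain whose leading vehicle moves forward and stays to the right of the region, and that the vehicles appearing or disappearing at the junction (together with the traffic light) only affect a neighborhood of $x=0$. The one region not directly covered is a neighborhood of the forward-moving leading vehicle; but since that vehicle sweeps each fixed strip of width $\mu$ in time $\le\mu/V_{\max}$ and the density there is $\le\rho_{\max}$, the total uncovered mass is $O(\mu)$, hence negligible as $\mu\to0$, and is absorbed exactly as the junction region is.

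Finally I would conclude by a diagonal argument. Given a sequence $\ep_n\to0$, I extract for each $m\ge1$, using the interior compactness with $\eta=1/m$, a subsequence converging in $L^1((0,1)\times(\mathcal R\setminus\mathcal J_{1/m}))$, and diagonalize to get one subsequence converging there for every $m$. For this subsequence,
\[
\|\rho^{\ep_n}_X-\rho^{\ep_{n'}}_X\|_{L^1((0,1)\times\mathcal R)}\le\|\rho^{\ep_n}_X-\rho^{\ep_{n'}}_X\|_{L^1((0,1)\times(\mathcal R\setminus\mathcal J_{1/m}))}+C\Big(\tfrac1m+\ep_n+\ep_{n'}\Big),
\]
so that choosing $m$ large first and then $n,n'$ large shows the sequence is Cauchy in $L^1((0,1)\times\mathcal R)$. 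Therefore $(\rho^\ep_X)$ is relatively compact, which is the claim.
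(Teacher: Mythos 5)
Your overall strategy coincides with the paper's own proof, which consists of exactly the two ingredients you use: Proposition \ref{prop.localcompact} in the interior of each branch, plus the smallness of the mass near the junction from Proposition \ref{prop.x+1-xpetit}, assembled by a cut-off/diagonal argument. Your treatment of the outgoing road and the final Cauchy assembly are sound: on $\mathcal R^0$ the frozen vehicles genuinely obey the unconstrained follow-the-leader dynamics (entrants arrive from behind), the overall leader travels at speed exactly $V_{\max}$, so the uncovered set per window is a strip of width $V_{\max}\tau$ with density at most $\rho_{\max}$ --- plus the single entrant gap interval that may straddle $x=\eta$, of mass at most $\ep$, which you overlooked but which is harmless.

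The gap is on the incoming roads, where ``treated symmetrically'' conceals two points at which the argument, as written, fails. First, the frozen road-$1$ chain does \emph{not} obey the plain follow-the-leader equations during a window: by \eqref{eq.X}, as soon as the vehicle in front of vehicle $i$ crosses the junction, the velocity of vehicle $i$ alternates between $V(X^{\sigma^i}-X^i)$ (green phases) and $V(-X^i)$ (red phases); since the scaled light period is $\ep T=\ep^{1-\alpha}\to 0$, this switching occurs many times inside every fixed window, so Proposition \ref{prop.localcompact} cannot be applied to the full frozen chain. The chain must be cut at the front-most vehicle $X^{j}$ that remains on the road during the \emph{whole} window; its own (switching) dynamics is then irrelevant, because the proposition only requires $\dot X^{j}\ge 0$ of its leader. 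Second, this genuine leader need not lie to the right of $-\eta$, and, moving at most $V_{\max}\tau$ per window, it does not sweep the stretch $(x^{\ep,j},-\eta)$; your accounting ``density $\le\rho_{\max}$ times swept area $O(\mu)$'' therefore gives $O(1)$ there, not $o(1)$. What saves the scheme is a different observation: every vehicle ahead of $X^{j}$ crosses during the window, hence (speeds being $\le V_{\max}$) sits within $V_{\max}\tau<\eta$ of the junction at the window's start and stays in $(-V_{\max}\tau,0)$ until it crosses; consequently the stretch between $X^{j}$ and $-\eta$ contains no vehicle at all, and the density there reduces to the single gap term of $X^{j}$, of total mass at most $\ep$ at each time. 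With this substitute bound, and applying Proposition \ref{prop.localcompact} with $a_2=\min(x^{\ep,j}(s),-\eta)$ (after a further extraction making these finitely many endpoints converge), your windowing scheme closes and yields the lemma.
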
 

\begin{proof}  This is a simple application of Proposition \ref{prop.localcompact} in the interior of each branch, combined with the fact that $\rho^\ep$ is bounded, outside of a small neighborhood of $0$, where its mass is small (see Proposition \ref{prop.x+1-xpetit}).
\end{proof}

We now consider a converging subsequence of $(\rho^\ep_X)$, denoted again $(\rho^\ep_X)$ for simplicity. Let $\rho$ be its limit. We claim that $\rho$ is the unique solution to \eqref{eq.macro}, which will prove the convergence of the whole compact sequence: 

\begin{Lemma} The limit $\rho$ is an entropy solution to \eqref{eq.macro}, where the germ $\mathcal G$ is given by \eqref{def.mathG}. 
\end{Lemma}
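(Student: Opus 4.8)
The plan is to verify conditions (i)--(ii) of \eqref{eq.macro} first and then to obtain the germ condition (iii) by a corrector argument. Conditions (i)--(ii) are immediate: applying Proposition \ref{prop.localcompact} on compact subsets of the interior of each branch $\mathcal R^k$---where $\rho^\ep_X$ is bounded by $\rho_{\max}$ away from the junction by Proposition \ref{prop.x+1-xpetit}---shows that each $\rho^k$ is a Kru\v{z}kov entropy solution of $\partial_t\rho^k+(f(\rho^k))_x=0$ on $\mathcal R^k$. Since the germ $\mathcal G$ is maximal and generated by the finite set $E$ of \eqref{def.E}, Remark \ref{rem.entropsol} reduces the problem to proving, for every $p=(p^0,p^1,p^2)\in E$ and every nonnegative test function $\psi$ (not necessarily vanishing at the junction), the entropy inequality
$$
\sum_{k=0}^2\int_0^1\!\!\int_{\mathcal R^k}\!\big(\eta(p^k,\rho^k)\,\psi_t+q^k(p^k,\rho^k)\,\psi_x\big)+\sum_{k=0}^2\int_{\mathcal R^k}\!\eta(p^k,\bar\rho^k)\,\psi(0,\cdot)\ \ge\ 0.
$$
Combined with the interior entropy inequalities already established, the boundary contributions this produces at $x=0$ are exactly the dissipation inequalities $D(p,\rho(t,0))\ge0$, which by maximality force $\rho(t,0)\in\mathcal G$.

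The comparison state is a rescaled corrector. Fixing $p\in E$, I take the $1$-periodic entropy solution $u_p$ of the mesoscopic problem \eqref{eq.meso} from Theorem \ref{thm.corrector} and set $\rho_Y(t,x):=u_p\!\big(t/(\ep T),\,x/(\ep T)\big)$. The scaling invariance of the conservation law makes $\rho_Y$ a solution of the mesoscopic model \eqref{eq.mesobis} with the fast germ $\mathcal G^\ep$ of period $\ep T$ (since $T_1=\theta T$ matches the splitting of $\mathcal G$ into $I^1,I^2$). I then insert this $\rho_Y$ into the approximate global Kato inequality \eqref{kj,hbn,lkjzsd} of Proposition \ref{prop.globalesti}.

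The choice of the mollification parameter $h=h(\ep)$ is where the hypothesis $\alpha\in(2/3,1)$ in \eqref{hyp.T} is used. By Proposition \ref{prop.gobBV} together with $T=\ep^{-\alpha}$, $T_1=\theta T$, one has $K\le C\ep^{\alpha-1}$ for the total variation and $n^\ep\le C\ep^{\alpha-1}$ for the number of switches. The error terms in \eqref{kj,hbn,lkjzsd} are then of size $\ep K h^2\sim\ep^{\alpha}h^2$ and $(h^{-1}+\ep)n^\ep\sim(h^{-1}+\ep)\ep^{\alpha-1}$. Choosing $h=\ep^{-\beta}$ with $1-\alpha<\beta<\alpha/2$---a range that is nonempty precisely because $\alpha>2/3$---drives every error term to $0$ as $\ep\to0$, while at the same time forcing $h^{-1}\ll\ep T$, i.e. the mollifier width is finer than the oscillation scale $\ep T$ of the corrector.

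The remaining, and main, difficulty is the limit passage. On one side $\rho^\ep_X\to\rho$ strongly in $L^1$; on the other, the rescaled corrector estimate \eqref{keypptcorr} gives, for each fixed $x\neq0$, that $\rho_Y(t,x)\to p^k$ as $\ep\to0$ (because $x/(\ep T)\to\pm\infty$), the boundary layer of $\rho_Y$ concentrating on a neighbourhood of $x=0$ of width $O(\ep T)\to0$. Because $h^{-1}\ll\ep T$, the convolution against $\delta_h(\tfrac{y-x}{2})$ does not smear out these oscillations and in the limit simply identifies $y$ with $x$. Splitting the integrals at $\{|x|=\eta\}$, using the strong $L^1$ convergence of $\rho^\ep_X$ together with $\rho_Y\to p^k$ on $\{|x|>\eta\}$, the uniform bound $\rho_{\max}$ on the thin set $\{|x|<\eta\}$, and finally letting $\eta\to0$, the limit of \eqref{kj,hbn,lkjzsd} reduces to the entropy inequality displayed above for the state $p$. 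As this holds for every generator $p\in E$ and every admissible $\psi$, the trace $\rho(t,0)$ satisfies $D(p,\rho(t,0))\ge0$ for all $p\in E$, and the maximality of the germ $\mathcal G$ generated by $E$ yields $\rho(t,0)\in\mathcal G$ for a.e.\ $t$. The subtle point throughout is the separation of the three scales in the order $h^{-1}\ll\ep T\ll 1$, which is exactly what the window $2/3<\alpha<1$ makes possible.
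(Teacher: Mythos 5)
Your proposal is correct and follows essentially the same route as the paper: interior entropy solutions via Proposition \ref{prop.localcompact}, reduction via Remark \ref{rem.entropsol} to the entropy inequality against each generator $p\in E$, insertion of the rescaled corrector of Theorem \ref{thm.corrector} into the approximate Kato inequality of Proposition \ref{prop.globalesti} with the BV bound of Proposition \ref{prop.gobBV}, and a power-law choice of $h$ (the paper fixes $h=\ep^{-1/3}$, which lies inside your window $\beta\in(1-\alpha,\alpha/2)$, whose nonemptiness is exactly the condition $\alpha>2/3$). The only slip is calling $E$ finite---it contains the curve $\Gamma$ of \eqref{def.Gamma}---but nothing in your argument uses finiteness, so this is immaterial.
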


\begin{proof} The fact that $\rho$ is an entropy solution outside $x=0$ is proved in Proposition \ref{prop.localcompact}. Recalling Remark  \ref{rem.entropsol}, we just need to check the following entropy inequality:
\be\label{ineq.des}
\sum_{k=0}^2\left\{ \int_0^\infty \int_{\mathcal R^k} \eta(p^k,\rho^k)\partial_t \psi^k+ q^k(p^k,\rho^k) \partial_x\psi^k +\int_{\mathcal R^k} \eta(p^k,\bar \rho^k)\psi^k(0,x)\right\} \geq 0
\ee
for any $p=(p^k) \in E$ and any continuous nonnegative test function $\psi:[0,\infty)\times \mathcal R\to [0,\infty)$ with a compact support and such that $\psi^k:=\psi_{|[0,+\infty)\times (\mathcal R^k\cup \left\{0\right\})}$ is $C^1$ for any $k=0,1,2$. Fix $p\in E$ and $\psi$ as above. According to Theorem \ref{thm.corrector}, there exists a solution $(u_p^k)\in L^\infty(\R\times \mathcal R)$ of \eqref{eq.meso} which is $1$-periodic in time satisfying \eqref{keypptcorr}. Let us define the scaled solution
$$
u^{\ep,k}_p(t,x)= u^k_p((\ep T)^{-1} t, (\ep T)^{-1} x), 
$$
which is an $\ep T-$periodic in time solution to \eqref{eq.mesobis}. By Proposition  \ref{prop.globalesti} we have 
\begin{align}\label{inj;ezsr dtfg:}
&\sum_{k=0}^2  \iint\int_{0}^{1} \Bigl\{  |  \rho^{\ep,k}_X(t,x)- u_p^{\ep,k}(t,y)|  \psi_t(t, \frac{x+y}{2}) \delta_h(\frac{y-x}{2}) \notag\\ 
& \qquad + {\rm sgn}(  \rho^{\ep,k}_X(t,x)- u_p^{\ep,k}(t,y)) (f(  \rho^{\ep,k}_X(t,x))-f( u_p^{\ep,k}(t,y)))\psi_x(t, \frac{x+y}{2}) \delta_h(\frac{y-x}{2})
\Bigr\}\; dtdxdy \notag \\ 
& \geq  \sum_{k=0}^2  \iint   | \rho^{\ep,k}_X(0^+,x)-u^{\ep,k}_p(0,y)|  \psi(0, \frac{x+y}{2}) \delta_h(\frac{y-x}{2})dxdy  \\ 
& 
 - C\ep K  \int_{0}^{1} (h^2 \|\psi(t, \cdot)\|_\infty+ h\|\psi_x(t, \cdot)\|_\infty)dt -C (h^{-1}+\ep) (n^\ep \|\psi\|_\infty + \int_0^{1} (\|\psi_t(t,\cdot)\|_\infty+
\|\psi_x(t,\cdot)\|_\infty) dt) \notag, 
\end{align}
where $n^\ep= [\ep^{-1}/T]$ and, by Proposition \ref{prop.gobBV},
$$
K= \sup_{t\in [0,1]} TV(v(\rho^\ep_X(t, \cdot)) \leq K_0(1+ \ep^{-1}(T_1^{-1}\vee (T-T_1)^{-1})) .
$$
Recall that, by assumption \eqref{hyp.T}, $T=\ep^{-\alpha}$ and  $T_1= \theta T$ with $\theta\in (0,1)$. If we choose $h= \ep^{-1/3}$, we get 
\begin{align*}
C\ep K  \int_{0}^{1} (h^2 \|\psi(t, \cdot)\|_\infty+ h\|\psi_x(t, \cdot)\|_\infty)dt +C (h^{-1}+\ep) (n^\ep \|\psi\|_\infty + \int_0^{1} (\|\psi_t(t,\cdot)\|_\infty+
\|\psi_x(t,\cdot)\|_\infty) dt) \leq C \ep^{\alpha-2/3}, 
\end{align*}
where, here and below, $C$ depends on $\|\psi\|_{W^{1,\infty}}$. Fix $M>0$ large. Then, by  \eqref{keypptcorr}, we have 
\begin{align*}
& \sum_{k=0}^2  \iint\int_{0}^{1} \Bigl\{  |  \rho^{\ep,k}_X(t,x)- u_p^{\ep,k}(t,y)|  \psi_t(t, \frac{x+y}{2}) \delta_h(\frac{y-x}{2})dxdy \\
&\qquad  \leq \sum_{k=0}^2  \iint\int_{0}^{1} \Bigl\{  |  \rho^{\ep,k}_X(t,x)- p^k|  \psi_t(t, \frac{x+y}{2}) \delta_h(\frac{y-x}{2})dxdy + 
C(M^{-1}+\ep).
\end{align*}
A symmetric inequality holds at time $t=0$. 
Handling all the other terms in \eqref{inj;ezsr dtfg:} in the same way, we get 
\begin{align*}
&\sum_{k=0}^2  \iint\int_{0}^{1} \Bigl\{  |  \rho^{\ep,k}_X(t,x)- p^k|  \psi_t(t, \frac{x+y}{2}) \delta_h(\frac{y-x}{2}) \notag\\ 
& \qquad + {\rm sgn}(  \rho^{\ep,k}_X(t,x)- p^k) (f(  \rho^{\ep,k}_X(t,x))-f( p^k))\psi_x(t, \frac{x+y}{2}) \delta_h(\frac{y-x}{2})
\Bigr\}\; dtdxdy  \\ 
& \geq  \sum_{k=0}^2  \iint   | \rho^{\ep,k}_X(0^+,x)-p^k|  \psi(0, \frac{x+y}{2}) \delta_h(\frac{y-x}{2})dxdy - C(\ep^{\alpha-2/3}+ M^{-1} ). \notag
\end{align*}
The $L^1$ convergence of $\rho^{\ep}$  (and \eqref{limt=0} for time $t=0$) and the assumption $\alpha\in (2/3,1)$ then gives \eqref{ineq.des}. 
\end{proof}

\paragraph{\textbf{Acknowledgement.}}
This research was partially funded by l'Agence Nationale de la Recherche (ANR), project ANR-22-CE40-0010 COSS. 
For the purpose of open access, the authors have applied a CC-BY public copyright licence to any Author Accepted Manuscript (AAM) version arising from this submission. The author thanks R\'egis Monneau for fruitful discussion.


\begin{thebibliography}{ABC}

%
%
%
%
%

%
\bibitem{AGDV11}
\textsc{Adimurthi, S. S. Ghoshal, R. Dutta, and G.D. Veerappa Gowda}, 
{\it Existence and nonexistence of TV bounds for scalar conservation laws with discontinuous flux}. Communications on pure and applied mathematics, 64(1) (2011), 84-115.

%



\bibitem{AKR11}  
\textsc{B. Andreianov, K.H. Karlsen and  N.H. Risebro}, 
{\it A theory of $L^ 1$-dissipative solvers for scalar conservation laws with discontinuous flux.} 
Arch. Ration. Mech.  Anal. 201 (2011), 27-86.

%
%
\bibitem{MR1952895}
 \textsc{A.~Aw, A.~Klar, T.~Materne, and M.~Rascle}, { Derivation of continuum
  traffic flow models from microscopic follow-the-leader models}, SIAM J. Appl.
  Math., 63 (2002), pp.~259--278.
%

%
\bibitem{BDMGGP22}
\textsc{A. Bayen, M.L. Delle Monache, M. Garavello, P. Goatin and B. Piccoli}  
{\sc Control problems for conservation laws with traffic applications: modeling, analysis, and numerical methods}. Springer Nature, 2022.


%
\bibitem{BCGHP}
\textsc{A. Bressan, S. Canic, M. Garavello, H. Herty and P. Piccoli},
{\it Flows on networks: Recent results and perspectives}, 
EMS Surv. Math. Sci.  1 (2014), 47-111.


%

\bibitem{CFM24} 
\textsc{P. Cardaliaguet, N., Forcadel, and  R. Monneau},  A class of germs arising from homogenization in traffic flow on junctions. arXiv preprint arXiv:2311.11709. To appear in J. Hyperbolic equations. 

%
\bibitem{CFGM} 
\textsc{P. Cardaliaguet, N.  Forcadel, T.  Girard  and R. Monneau},
{\it {Conservation law and Hamilton-Jacobi equations on a junction: the convex case}}. {Preprint, https://hal.science/hal-04279829.}
To appear in Discrete and Continuous Dynamical Systems

%
\bibitem{CFarma}
\textsc{P. Cardaliaguet, and N.  Forcadel} 
{\it Microscopic derivation of a traffic flow model with a bifurcation}. 
Arch Rational Mech Anal 248, 6. 

%

\bibitem{CFGG} 
\textsc{F. A. Chiarello, J. Friedrich, P. Goatin and S. G\"{o}ttlich} (2020). Micro-macro limit of a nonlocal generalized Aw-Rascle type model. SIAM Journal on Applied Mathematics, 80(4), 1841-1861.





%




%


%

%


%
\bibitem{rigorousLWR}
\textsc{M.~Di~Francesco and M.~D. Rosini}, 
{\it Rigorous derivation of nonlinear
  scalar conservation laws from follow-the-leader type models via many particle
  limit}, Arch. Ration. Mech. Anal., 217 (2015), pp.~831--871.
 
 \bibitem{rigorousLWRbis} 
\textsc{M.~Di~Francesco and G. Stivaletta}, 
{\it The one-sided lipschitz condition in the follow-the-leader approximation of scalar conservation laws.} Journal of Hyperbolic Differential Equations, 19(04) (2022), 775-807.

\bibitem{FMR22b} 
\textsc{U.S. Fjordholm, M. Musch and N.H. Risebro},
{\it Well-posedness and convergence of a finite volume method for conservation laws on networks.} 
SIAM J. Numer. Anal. 60 (2) (2022), 606-630.

%
\bibitem{FSZ18} 
\textsc{N. Forcadel, W. Salazar, and M. Zaydan}, 
{\it Specified homogenization of a discrete traffic model leading to an effective junction condition}. 
Communications on Pure and Applied Analysis, 17(5)  (2018), 2173-2206.

%
\bibitem{FoSa20}
\textsc{N. Forcadel, and W. Salazar}, 
{\it Homogenization of a discrete model for a bifurcation and application to traffic flow}. Journal de Math\'ematiques Pures et Appliqu\'ees, 136 (2020), 356-414.

\bibitem{FIM24}
\textsc{N. Forcadel, C. Imbert, and R. Monneau}, 
{\it Germs for scalar conservation laws: the Hamilton-Jacobi equation point of view.} arXiv preprint arXiv:2407.04318.

\bibitem{FoImMo09}
\textsc{N. Forcadel, C. Imbert, and R. Monneau}, 
{\it Homogenization of some particle systems with two-body interactions and of the dislocation dynamics}. Discrete and continuous dynamical systems-series A, 23(3) (2009), 785-826.

%

%
%

%

\bibitem{GHP16}
\textsc{M. Garavello, K. Han and B. Piccoli} 
{\sc Models for vehicular traffic on networks} (Vol. 9). American Institute of Mathematical Sciences (AIMS), Springfield, MO, 2016.


%

%
\bibitem{GoatinRossi}
 \textsc{P.~Goatin and F.~Rossi,} {A traffic flow model with non-smooth metric interaction: well-posedness and micro-macro limit}, Commun. Math. Sci., 15
  (2017), pp.~261--287.



\bibitem{MR3721873}
\textsc{H.~Holden and N.~H. Risebro,} { The continuum limit of
  {F}ollow-the-{L}eader models---a short proof}, Discrete Contin. Dyn. Syst.,
  38 (2018), pp.~715--722.




\bibitem{IM17} 
\textsc{C. Imbert and R. Monneau}, 
{\it Flux-limited solutions for quasi-convex Hamilton-Jacobi equations on networks.} 
Annales scientifiques de l'ENS 50 (2) (2017), 357-448
%
%

%
\bibitem{K70}
\textsc{S.M. Kruzhkov},
{\it First order quasi-linear equations in several independent variables}, 
Math. USSR Sbornik 10 (2)  (1970), 217-243.


%
%
%
%
%

\bibitem{MISHRASurvey} 
\textsc{S.~Mishra},
{\it Chapter 18 - numerical methods for conservation laws with discontinuous coefficients},
\newblock in \emph{Handbook of Numerical Methods for Hyperbolic Problems}, R.~Abgrall and C.-W. Shu, eds.,  \emph{Handbook of Numerical Analysis}, Elsevier, \textbf{18} (2017), 479-506.


\bibitem{MFR22} 
\textsc{M. Musch,  U.S. Fjordholm  and N.H. Risebro},  
{\it Well-posedness theory for nonlinear scalar conservation laws on networks.} 
Netw. Heterog. Media, 17 (2022), 101-128.

%


\bibitem{Ps07} 
\textsc{E.Y. Panov},   
{\it Existence of strong traces for quasi-solutions of multidimensional conservation laws.}
J. Hyperbolic Differ. Equ. 4 (2007), 729-770.

%

%


%

%

%


%





\end{thebibliography}
\end{document}